\documentclass[a4paper,11pt]{article}

\usepackage[a4paper,left=26.5mm,right=26mm,top=26.25mm,bottom=36.75mm]{geometry}

\usepackage[english]{babel}	
\usepackage{lmodern}
\usepackage[T1]{fontenc}
\usepackage[utf8]{inputenc}	

\usepackage{lipsum}
\usepackage{amsfonts}
\usepackage{algorithmic}


\usepackage{authblk}
\usepackage{hyperref}


\title{Luenberger observers for infinite-dimensional systems, Back and Forth Nudging and application to a crystallization process\thanks{This research was partially funded by the French Grant ANR ODISSE (ANR-19-CE48-0004-01)}}

\author[$*$1]{Lucas Brivadis}
\author[1]{Vincent Andrieu}
\author[1]{Ulysse Serres}
\author[2]{Jean-Paul Gauthier}

\affil[1]{Univ. Lyon, Universit\'e Claude Bernard Lyon 1, CNRS, LAGEPP UMR 5007, France}
\affil[2]{Universit\'e de Toulon, Aix Marseille Univ, CNRS, LIS, France}
\affil[$*$]{lucas.brivadis@univ-lyon1.fr}

\usepackage{amsopn}


\usepackage{amsfonts, amssymb, amsmath, amsthm}
\allowdisplaybreaks
\usepackage[all]{xy} 
\usepackage{mathrsfs}		
\usepackage{bbm}			
\DeclareMathAlphabet{\mathbbold}{U}{bbold}{m}{n}	
\usepackage{verbatim}
\usepackage{bm}	
\usepackage{comment}
\usepackage{accents} 


\usepackage{graphics}              
\usepackage{import}              
\usepackage{xcolor}            
\usepackage{graphicx}            


\usepackage{xspace} 

\usepackage{centernot} 

\usepackage{enumitem}

\usepackage[nocompress]{cite}

\usepackage{xfrac} 

%
%

%
%

\catcode`\@=11
\def\downparenfill{$\m@th\braceld\leaders\vrule\hfill\bracerd$}
\def\overparen#1{\mathop{\vbox{\ialign{##\crcr\crcr
\noalign{\kern0.4ex}
\downparenfill\crcr\noalign{\kern0.4ex\nointerlineskip}
$\hfil\displaystyle{#1}\hfil$\crcr}}}\limits}
\catcode`\@=12


\usepackage{thm-restate} 


\newcommand{\set}[1]{\left\{#1\right\}}
\newcommand{\setst}[2]{\left\{#1~\middle\vert~#2\right\}}

\newcommand{\I}{\chi}

\newcommand{\lc}{[}

\newcommand{\R}{\mathbb{R}}
\newcommand{\N}{\mathbb{N}}

\renewcommand{\le}{\leqslant}
\renewcommand{\ge}{\geqslant}
\renewcommand{\leq}{\leqslant}
\renewcommand{\geq}{\geqslant}

\newcommand{\eps}{\varepsilon}
\renewcommand{\epsilon}{\varepsilon}
\renewcommand{\phi}{\varphi}



\newcommand{\lang}{\left\langle}
\newcommand{\rang}{\right\rangle}


\newcommand{\ps}[2]{\lang#1,#2\rang}
\newcommand{\psX}[2]{\lang#1,#2\rang_\XX}
\newcommand{\psY}[2]{\lang#1,#2\rang_\YY}

\newcommand{\norm}[1]{\left\Vert #1\right\Vert_\XX}
\newcommand{\normY}[1]{\left\Vert #1\right\Vert_\YY}

\newcommand{\abs}[1]{\left\vert #1\right\vert}

\newcommand{\dd}{\mathrm{d}}
\newcommand{\diff}{\dd}

\newcommand{\cv}{\to}
\newcommand{\cvl}[1]{\underset{#1}{\longrightarrow}}
\newcommand{\cvf}{\overset{w}{\rightharpoonup}}
\newcommand{\cvfl}[1]{\underset{#1}{\overset{w}{\relbar\joinrel\relbar\joinrel\rightharpoonup}}}


\newcommand{\fonction}[5]{
\begin{align*}
\displaystyle
\begin{array}{lrcl}
#1: & #2 & \longrightarrow & #3 \\
    & #4 & \longmapsto & #5
\end{array}
\end{align*}}

\newcommand{\XX}{X}
\newcommand{\YY}{Y}

\newcommand{\ie}{\textit{i.e.,~}}
\newcommand{\eg}{\textit{e.g.,~}}

\newcommand{\st}{such that~}

\newcommand{\Id}{\mathrm{Id}} 



\newcommand{\lin}{\mathscr{L}}
\newcommand{\linx}{\lin(\XX)}
\newcommand{\linxy}{\lin(\XX, \YY)}

\newcommand{\opa}{A}
\newcommand{\opc}{C}
\newcommand{\dom}{\mathcal{D}}
\newcommand{\doma}{\dom}
\newcommand{\gram}{W}

\newcommand{\sg}{\mathbb{T}}
\newcommand{\sgeps}{\mathbb{S}}
\newcommand{\sgepsi}{\mathbb{S}_+}
\newcommand{\sgepsii}{\mathbb{S}_-}


\newcommand{\evol}{\mathbb{T}}
\newcommand{\evoleps}{\mathbb{S}}
\newcommand{\evolepsi}{\mathbb{S}_+}
\newcommand{\evolepsii}{\mathbb{S}_-}

\newcommand{\regis}{\textsuperscript{\mbox{\scriptsize{\textregistered}}}\xspace}
\newcommand{\fbrm}{FBRM\regis}
\newcommand{\noy}{k}
\newcommand{\dlc}{q}

\newcommand{\dtc}{n}
\newcommand{\vit}{G}
\newcommand{\cont}{u}

\newcommand{\etat}{z}
\newcommand{\mes}{y}
\newcommand{\rr}{r}
\newcommand{\etath}{\hat{\etat}}
\newcommand{\xmin}{x_{\min}}
\newcommand{\xmax}{x_{\max}}
\newcommand{\lmin}{\ell_{\min}}
\newcommand{\lmax}{\ell_{\max}}

\newcommand{\xd}{x_0}
\newcommand{\xf}{x_1}

\newcommand{\interv}{U}
\newcommand{\intervmax}{\interv_{\max}}

\newcommand{\ff}{f}
\newcommand{\f}{u}

\newcommand{\ceps}{\opc}
\newcommand{\trans}{v}
\newcommand{\til}{\tilde}

\newcommand{\Obsspace}{\mathcal{O}}
\newcommand{\opl}{L}
\newcommand{\proj}{\Pi}
\newcommand{\pobs}{\proj_\Obsspace}
\newcommand{\pobst}{\proj_{\Obsspace_\tau}}

\newcommand{\bornL}{M_L}


%
%

%
%


\DeclareMathOperator*{\esssup}{ess\:sup}

\DeclareMathOperator{\ran}{ran}


\DeclareMathOperator{\modulo}{\,mod\,}

\DeclareMathAlphabet{\mathbbold}{U}{bbold}{m}{n}	

%
%

%
%
\theoremstyle{plain}
\newtheorem{theorem}{Theorem}[section]
\newtheorem{corollary}[theorem]{Corollary}
\newtheorem{lemma}[theorem]{Lemma}
\newtheorem{proposition}[theorem]{Proposition}

\theoremstyle{definition}
\newtheorem{definition}[theorem]{Definition}

\newtheorem{remark}[theorem]{Remark}




%
%

%
%
\numberwithin{equation}{section} 
\setlength\arraycolsep{2pt} 
%
%

%
%
%
%



\begin{document}

\maketitle

\begin{abstract}
This paper deals with the observer design problem for time-varying linear infinite-dimensional systems.
We address both the problem of online estimation of the state of the system from the output via an asymptotic observer,
and the problem of offline estimation of the initial state via a Back and Forth Nudging (BFN) algorithm.
In both contexts, we show under a weak detectability assumption that a Luenberger-like observer may reconstruct the so-called observable subspace of the system.
However, since no exact observability hypothesis is required, only a weak convergence of the observer holds in general. Additional conditions on the system are required to show the strong convergence.
We provide an application of our results to a batch crystallization process modeled by a one-dimensional transport equation with periodic boundary conditions, in which we try to estimate the Crystal Size Distribution from the Chord Length Distribution.
\end{abstract}

\section{Introduction}

To analyze, monitor or control physical or biological phenomena, the first step is to provide a mathematical modeling  in the form of mathematical equations that describe the evolution of the system variables. 
Some of these variables are accessible through measurement and others are not.
One of the problems in control engineering is that of designing algorithms to provide real time estimates of the unmeasured data from the others.
These estimation algorithms are called state observers and can be found in many devices.
The implementation of such observers for infinite-dimensional systems is a topic of great interest from both the practical and theoretical points of view that has been extensively studied in the past decades (see, \eg \cite{TW, slemrod1974note, liu1997locally, slemrod1972linear, Celle, xu1995observer}).

More recently, these results have been employed in data assimilation problems, leading to the so-called Back and Forth Nudging (BFN) algorithms (see, \eg \cite{auroux2005back, Ramdani, haine2014recovering, haine2011, haine2011fr, ito2011time, Boulanger}).
In this context, observers are used iteratively forward and backward in time to solve the offline estimation problem of reconstructing the initial state of the system. Such problems occur for example in meteorology or oceanography \cite{auroux2008nudging, auroux2009back}.

Mainly two types of results are known about the convergence of Luenberger-like observers for linear systems, depending on the observability hypotheses made.
Under an \emph{exact observability} hypothesis that links the $L^2$-norm of the measured output on some time interval to the norm of the initial state, a Luenberger-like asymptotic observer that converges \emph{exponentially} to the actual state of the system may be designed \cite{slemrod1972linear, TW, liu1997locally}.
Under this hypothesis, it is proved in \cite{Ramdani, ito2011time} that the BFN algorithm estimates exponentially the initial state of the system.

Otherwise, when the system is only \emph{approximately observable}, that is, any two trajectories of the system may be distinguished by looking at the output on some time interval, then, if the system is \emph{dissipative}, one can prove that 
the same asymptotic observer converges only \emph{weakly} to the state \cite{slemrod1974note, xu1995observer, Celle}.
For the BFN algorithm, G. Haine proved in \cite{haine2014recovering} for autonomous systems generated by skew-adjoint generators that the initial state estimation still converges \emph{strongly} (but no more exponentially) to the actual initial state. 

The time-varying context has been investigated for control systems in \cite{xu1995observer, Celle}, in which some \emph{persistency} assumptions are required, and a weak convergence is guaranteed.
When no observability assumptions are made, then one may expect the observer to converge to the so-called \emph{observable subspace} of the system, which is clearly defined only for autonomous systems.

In this paper, we consider infinite-dimensional time-varying linear systems.
We investigate both the usual asymptotic observer design problem, and the backward and forward observers design problem for the BFN algorithm.
We relax the dissipativity hypothesis, and replace it by a weak detectability hypothesis, which states that the distance between any two trajectories of the system that share the same output is a non-increasing function of time.
When no observability hypothesis holds, we show that the observer estimates in the weak topology the observable part of the state, which is equal to the whole state when the system is approximately observable. Under additional assumptions on the system, we also show the strong convergence of the observer.
We compare our results with the existing literature mentioned above.

As an application of our results, we consider a batch crystallization process modeled by a one-dimensional time-varying transport equation with periodic boundary conditions.
This process aims to produce solid crystals meeting some physical and chemical specifications.
One of the most important physical property to monitor is the Crystal Size Distribution (CSD).
Information available online are the Chord Length Distribution (CLD) obtained from the \fbrm technology and the solute concentration. However, as shown in the following, the considered model describing this system is time-varying and not exactly observable, which is a motivation for these theoretical developments.

The paper is organized as follows.
In Section~\ref{secprob}, we describe the systems under consideration, and make the required assumptions to ensure the well-posedness of the usual asymptotic observer and the backward and forward observers of the BFN.
Our main results are stated in Section~\ref{secmain}, discussed in Section~\ref{secdisc}, and proved in Section~\ref{secproof}.
In Section~\ref{secex}, we discuss about their implications for the one-dimensional transport equation with periodic boundary conditions and to a batch crystallization process, in which we aim to estimate the CSD from the CLD.

\medskip
\paragraph{Notations}
Denote by $\R$ (resp. $\R_+$) the set of real (resp. non-negative) numbers and by $\N$ (resp. $\N^*$) the set of non-negative (resp. positive) integers.
For all Hilbert space $X$, denote by $\psX{\cdot}{\cdot}$ the inner product over $\XX$ and $\norm{\cdot}$ the induced norm.
The identity operator over $\XX$ is denoted by $\Id_\XX$.
For all $k\in\N\cup\{\infty\}$ and all interval $U\subset\R$, the set $C^k(U; \XX)$ is the set of $k$-continuously differentiable functions from $U$ to $\XX$.

We recall the characterization of the strong and weak topologies on $\XX$.
A sequence $(x_n)_{n\geq0}\in\XX^\N$ is said to be strongly convergent to some $x^\star\in\XX$ if
$\norm{x_n-x^\star}\cv 0$ as $n\cv+\infty$, and we shall write $x_n\cv x^\star$ as $n\cv+\infty$.
It is said to be weakly convergent to $x^\star$ if
$\psX{x_n-x^\star}{\psi}\cv 0$ as $n\cv+\infty$ for all $\psi\in\XX$, and we shall write $x_n\cvf x^\star$ as $n\cv+\infty$.
The strong topology on $\XX$ is finer than the weak topology (see, \eg \cite{Brezis} for more properties on these usual topologies).

If $\YY$ is also a Hilbert space, then $\linxy$ denotes the space of \emph{bounded} linear maps from $\XX$ to $\YY$ and $\|\cdot\|_{\linxy}$ the operator norm. Set $\linx = \lin(\XX, \XX)$.
For all $\opl\in\linxy$, denote by $\ran \opl$ its range and $\ker \opl$ its kernel.
We identify the Hilbert spaces with their dual spaces via the canonical isometry, so that the adjoint of $\opl$, denoted by $\opl^*$, lies in $\lin(\YY, \XX)$.
If $\opl^*\opl = \opl\opl^*$, then $\opl$ is said to be \emph{normal}.
If there exists a positive constant $\alpha$ such that $\norm{\opl x}\geq\alpha\norm{x}$ for all $x\in\XX$, then $\opl$ is said to be \emph{bounded from below}.

For any set $E\subset\XX$, the closure of $E$ in the strong topology of $\XX$ is denoted by $\overline{E}$.
If $E$ is a linear subspace of $\XX$, then $E^\perp$ denotes its orthogonal complement in $\XX$.
Moreover, if $E$ is closed, set $\proj_E\in\linx$ the orthogonal projection such that $\ran \proj_E = E$.

\section{Problem statement}\label{secprob}

Let $\XX$ and $\YY$ be two Hilbert spaces with real\footnote{Even if we could consider complex inner product, we prefer to restrict ourselves to real inner products to simplify the presentation.} inner products.
Let $\dom$ be a dense subset of $\XX$.
For all $t\geq0$, let $\opa(t): \doma \to \XX$ be the generator of a strongly continuous semigroup on $\XX$ and $\opc\in \linxy$.
Let $z_0\in\XX$.
Consider the non-autonomous linear abstract Cauchy problem with measured output
\begin{equation}
\begin{aligned}
\begin{cases}
\dot \etat = \opa(t) \etat\\
\etat(0) = \etat_0
\end{cases}
\end{aligned}
,\qquad
y = Cz.
\label{syst}
\end{equation}
In this paper, we are concerned with the problem of designing an observer of the state $\etat$ based on the measurement $\mes$. 
We make the following hypotheses, which make \eqref{syst} a case of hyperbolic system as defined in \cite[Chapter 5]{Pazy}.
Let $T\in\R_+\cup\{+\infty\}$, and adopt the convention that $[0, T] = \R_+$ if $T=+\infty$.
Assume that the family $(\opa(t))_{t\in[0, T]}$ is a stable (see \cite[Chapter 5, Section 5.2]{Pazy} for a definition) family of generators of strongly continuous semigroups that share the same domain $\dom$. Assume also that for all $x\in\doma$, the function $t\mapsto\opa(t)x$ is continuously differentiable on $\XX$. These hypotheses hold for the rest of the paper.
Then \cite[Chapter 5, Theorem 4.8]{Pazy} ensures that the family $(\opa(t))_{t\in[0, T]}$ is the generator of a unique evolution system on $\XX$ denoted by $(\evol(t, s))_{0\leq s\leq t\leq T}$.
Moreover,
there exist two constants $M,\:\omega>0$ such that
\begin{equation}\label{E:bound-M-omega}
    \|\evol(t, s)\|_{\linx} \leq M e^{\omega(t-s)}, \quad \forall~0\leq s\leq t\leq T.
\end{equation}
For all $\etat_0\in\XX$,~\eqref{syst} admits a unique solution $\etat\in C^0([0, T]; \XX)$ given by $\etat(t) = \evol(t, 0)\etat_0$ for all $t \in[0, T]$. Moreover, if $\etat_0\in\doma$, then $\etat\in C^0([0, T]; \doma)\cap C^1([0, T]; \XX)$.
The reader may refer to \cite[Chapter 5]{Pazy} or \cite{ito} for more details on the evolution equations theory.

\begin{definition}[Autonomous context]\label{rem:auto}
We shall say that~\eqref{syst} is autonomous if there exists an operator $A:\doma\rightarrow\XX$ such that $A(t)=A$ for all $t\in\R_+$.
\end{definition}
\begin{remark}
In the autonomous context, $T=+\infty$ and the evolution system $\evol$ is such that
$\evol(t, s) = \evol(t-s, 0)$
for all $t\geq s\geq0$.
By abuse of notation, the strongly continuous semigroup generated by $A$ is also denoted by $\evol$, so that $\evol(t) = \evol(t, 0)$ for all $t\in\R_+$.
The same shortened notations hold for any other autonomous system.
\end{remark}

Our goal is to build an observer system $\etath$ fed by the output $\mes$ of~\eqref{syst}, such that $\etath$ estimates the actual state $\etat$.
We raise two different observer issues: the usual asymptotic observer problem, and the inverse problem of reconstructing the initial state.

\subsection{Asymptotic observer}
In order to find an asymptotic observer, we naturally assume that $T = +\infty$.
The goal is to find a new dynamical system fed by the output of~\eqref{syst} which asymptotically learns the state from the dynamic of the output.
This issue was raised by D. Luenberger in his seminal paper \cite{Luenberger} in the context of finite-dimensional autonomous linear systems. In \cite{slemrod1972linear, slemrod1974note}, J. Slemrod investigates the dual problem of stabilization in infinite-dimensional Hilbert spaces. In this paper, we follow this path and introduce the usual infinite-dimensional version of the Luenberger observer.

Let $\rr>0$ and $\etath_0\in\XX$. Consider the following Luenberger-like observer
\begin{equation}
\begin{aligned}
\begin{cases}
\dot \etath = \opa(t) \etath - \rr\opc^*(\opc\etath-\mes)\\
\etath(0) = \etath_0
\end{cases}
\end{aligned}
\label{obs}
\end{equation}
Set $\eps = \etath-\etat$ and $\eps_0 = \etath_0-\etat_0$.
From now on, $\etath$ represents the state estimation made by the observer system and $\eps$ the error between this estimation and the actual state of the system.
Then $\etath$ satisfies~\eqref{obs} if and only if $\eps$ satisfies
\begin{equation}
\begin{aligned}
\begin{cases}
\dot \eps = (\opa(t) - \rr\opc^*\opc)\eps\\
\eps(0) = \eps_0
\end{cases}
\end{aligned}
\label{eps}
\end{equation}
Since $\opc\in\linxy$, \cite[Chapter 5, Theorem 2.3]{Pazy} claims that $(\opa(t)-\rr\opc^*\opc)_{t\ge 0}$ is also a stable family of generators of strongly continuous semigroups, and generates an evolution system on $\XX$ denoted by $(\evoleps(t, s))_{0\leq s\leq t}$.
Then,
systems \eqref{obs} and~\eqref{eps} have respectively a unique solution $\etath$ and $\eps$ in $C^0([0, +\infty); \XX)$.
Moreover,
$\etath(t) =\evol(t, 0)\etat_0+\evoleps(t, 0)\eps_0$
and $\eps(t) = \evoleps(t, 0)\eps_0$ for all $t \in[0, +\infty)$. If $(\etath_0, \eps_0)\in\doma^2$, then $\etath$, $\eps\in C^0([0, +\infty); \doma)\cap C^1([0, +\infty); \XX)$.

We are interested in the convergence properties of the state estimation $\etath$ to the actual state $\etat$, \ie of the estimation error $\eps$ to $0$.

\begin{definition}[Asymptotic observer]
For any closed linear subspace $\Obsspace$ of $\XX$,
\eqref{obs} is said to be a strong (resp. weak) asymptotic $\Obsspace$-observer of~\eqref{syst} if and only if $\pobs\sgeps(t, 0)\eps_0\cv0$ (resp. $\pobs\sgeps(t, 0)\eps_0\cvf0$) as $t\cv+\infty$ for all $\eps_0\in\XX$.
An $\XX$-observer is shortly called an observer.
\end{definition}

\subsection{Back and forth nudging}
Now consider a problem which is slightly different from the former one.
Assume that $T<+\infty$, and address the problem of offline state estimation.
The goal is to use the knowledge of the output and its dynamic on the finite time interval $[0, T]$ to estimate the initial state of the system.
To achieve this, the idea is to use iteratively forward and backward observers.
This methodology is called the back and forth nudging in \cite{auroux2005back, auroux2012back, auroux2008nudging}, or the time reversal based algorithm in \cite{ito2011time}.

In order to build this observer, we need to assume that the family $(\opa(t))_{t\in[0, T]}$ is the generator of a \emph{bi-directional} evolution system on $\XX$ denoted by $(\evol(t, s))_{0\leq s, t\leq T}$.
\begin{definition}[Bi-directional evolution system]\label{def:bi}
A bi-directional evolution system on a Banach space $X$ over $[0, T]$ is a two parameter family $(\evol(t, s))_{0\leq s, t\leq T}$ of operators in $\linx$ satisfying the three following properties:
\begin{enumerate}[label = (\roman*)]
    \item for all $t\in[0, T]$, $\sg(t, t) = \Id_\XX$;
    \item for all $t, s, \tau\in[0, T]$, $\sg(t, s)\sg(s, \tau) = \sg(t, \tau)$;
    \item for all $t\in[0, T]$ and all $x\in X$, $\sg(t, s)x \to x$ as $s\to t$.
\end{enumerate}
\end{definition}
Using the classical definition of an evolution system \cite[Definition 5.3]{Pazy}, one can check that 
a family $(\evol(t, s))_{0\leq s, t\leq T}$
of bounded linear operators on $X$ is a bi-directional evolution system
if and only if:
\begin{enumerate}[label = (\alph*)]
    \item $(\evol(t, s))_{0\leq s\leq t\leq T}$ is an evolution systems on $\XX$;
    \item $(\evol(T-t, T-s))_{0\leq s\leq t\leq T}$ is an evolution system on $\XX$;\label{enum:b}
    \item for all $t, s\in [0, T]$, $\sg(s, t)\sg(t, s) = \Id_\XX$.\label{enum:c}
\end{enumerate}
A family of operators $(A(t))_{t\in[0, T]}$ is said to be the generator of a bi-directional evolution system if and only if it is the generator of an evolution system $(\evol(t, s))_{0\leq s\leq t\leq T}$, $(-\opa(T-t))_{t\in[0, T]}$ is the generator of an evolution system $(\evol(T-t, T-s))_{0\leq s\leq t\leq T}$ and \ref{enum:c} is satisfied.
Each time backward and forward observers are considered, we assume that $(\opa(t))_{t\in[0, T]}$ is the generator of a bi-directional evolution system on $\XX$.
Moreover, we make on the family $(-\opa(T-t))_{t\in[0, T]}$ the same hypothesis than on $(A(t))_{t\in[0, T]}$, namely, it is a stable family of generators of strongly continuous semigroups that share the same domain $\dom$, so that \eqref{E:bound-M-omega} is now satisfied for all $s, t\in[0, T]$.

Let $\etath_0\in\XX$. For every $n\in\N$, we consider the following dynamical systems defined on $[0, T]$ as in \cite{Ramdani} by
\begin{align}
&\begin{cases}
\dot {\etath}^{2n} = \opa(t) \etath^{2n} - r\opc^*(\opc\etath^{2n}-\mes)\\
\etath^{2n}(0) =
\begin{cases}
\etath^{2n-1}(0) & \text{if } n\geq 1\\
\etath_0 & \text{otherwise.}
\end{cases}
\end{cases}
\label{obs2}\\
&\begin{cases}
\dot {\etath}^{2n+1} = \opa(t) \etath^{2n+1} + r\opc^*(\opc\etath^{2n+1}-\mes)\\
\etath^{2n+1}(T) = \etath^{2n}(T).
\end{cases}
\label{obs2b}
\end{align}
For all $n\in\N$, let $\epsilon^n = \etath^n - \etat$ and $\epsilon_0 = \etath_0-\etat_0$. Then $\etath^{2n}$ and $\etath^{2n+1}$ satisfy respectively~\eqref{obs2} and~\eqref{obs2b} if and only if $\epsilon^{2n}$ and $\epsilon^{2n+1}$ satisfy
\begin{align}
&\begin{cases}
\dot {\epsilon}^{2n} = (\opa(t) - r\opc^*\opc) {\epsilon}^{2n}\\
{\epsilon}^{2n}(0) =
\begin{cases}
{\epsilon}^{2n-1}(0) & \text{if } n\geq 1\\
{\epsilon}_0 & \text{otherwise.}
\end{cases}
\end{cases}
\label{eps2}\\
&\begin{cases}
\dot {\epsilon}^{2n+1} = (\opa(t) + r\opc^*\opc) {\epsilon}^{2n+1}\\
{\epsilon}^{2n+1}(T) = {\epsilon}^{2n}(T).
\end{cases}
\label{eps2b}
\end{align}

Since $\opc\in\linxy$, \cite[Chapter 5, Theorem 2.3]{Pazy} claims that both $(\opa(t)-\rr\opc^*\opc)_{t\in[0, T]}$ and $(\opa(t)+\rr\opc^*\opc)_{t\in[0, T]}$ are stable families of generators of strongly continuous semigroups that generate bi-directional evolution systems on $\XX$ denoted respectively by $(\evolepsi(t, s))_{0\leq s, t\leq T}$ and $(\evolepsii(t, s))_{0\leq s, t\leq T}$.
Then, for all $n\in\N$, \eqref{obs2}, \eqref{obs2b}, \eqref{eps2} and \eqref{eps2b} have respectively a unique solution ${\etath}^{2n}$, ${\etath}^{2n+1}$, ${\epsilon}^{2n}$ and ${\epsilon}^{2n+1}$ in $C^0([0, T]; \XX)$.
Moreover,
$\etath^{2n}(t) =
\sg(t, 0)\etat_0 +
\sgepsi(t, 0)\eps^{2n}(0)$,
$\etath^{2n+1}(t) = 
\sg(t, T)\etat(T) +
\sgepsii(t, T)\eps^{2n+1}(T)$,
$\epsilon^{2n}(t) = \sgepsi(t, 0)\epsilon^{2n}(0)$, $\epsilon^{2n+1}(t) = \sgepsii(t, T)\epsilon^{2n+1}(T)$
for all $t\in[0, T]$.
In particular, note that
\begin{equation}
    \eps^{2n}(0) = \left(\sgepsii(0, T)\sgepsi(T, 0)\right)^{n}\eps_0.
\end{equation}
If $(\etath_0, \eps_0)\in\doma^2$, then ${\etath}^{n}, {\epsilon}^{n} \in C^0([0, T]; \doma)\cap C^1([0, T]; \XX)$ for all $n\in\N$.

We are interested in the convergence properties of the initial state estimation $\etath^{2n}(0)$ to the actual state $\etat(0)$, \ie of the estimation error $\eps^{2n}(0)$ to $0$, as $n$ goes to infinity.

\begin{definition}[Back and forth observer]
For any closed linear subspace $\Obsspace$ of $\XX$,
the system \eqref{obs2}-\eqref{obs2b}
is said to be a strong (resp. weak) back and forth $\Obsspace$-observer of~\eqref{syst} if and only if $\pobs\eps^{2n}(0)\cv0$ (resp. $\pobs\eps^{2n}(0)\cvf0$) as $n\cv+\infty$ for all $\eps_0\in\XX$.
An $\XX$-observer is shortly called an observer.
\end{definition}

\section{Main results}\label{secmain}

In this section, we state our main results about the asymptotic observer and the back and forth observer.
Then, we discuss our hypotheses and compare our results with the existing literature.

A crucial operator to consider in order to investigate the convergence properties of a Luenberger-like observer is the so-called \emph{observability Gramian}.
\begin{definition}[Observability Gramian]
For all $t_0\in[0, T]$ and all $\tau\in[0, T-t_0]$, let us define
\fonction{\gram(t_0, \tau)}{\XX}{\XX}{\etat_0}{\displaystyle\int_{t_0}^{t_0+\tau}\evol(t, t_0)^*\opc^*\opc\evol(t, t_0)\etat_0\dd t}
the \emph{observability Gramian} of the pair $(\sg, \opc)$.
\end{definition}
The operator $\gram(t_0, \tau)$ is a bounded self-adjoint endomorphism of $\XX$, that characterizes the observability properties of~\eqref{syst}.
Moreover, $\gram$ is continuous in $\linx$ with respect to $(t_0, t)$, and we have 
$\|\gram(t_0, \tau)\|_{\linx} \leq \left(Me^{\omega\tau}\|C\|_{\linxy}\right)^2$.

\begin{remark}
In the autonomous context,
$\gram(t_0, \tau) = \gram(0, \tau)$ for all $t_0,\tau\in\R_+$.
Then, by abuse of notation, we denote $\gram(\tau) = \gram(0, \tau)$.
\end{remark}

\begin{definition}[Observable subspace]\label{defO}
 For all $\tau\in[0, T]$, let
\begin{equation}\label{defobssapce}
    \Obsspace_\tau = \left(\ker\gram(0, \tau)\right)^\perp.
\end{equation}
be the \emph{observable subspace} at time $\tau$ of the pair $(\sg, \opc)$.
If $T = +\infty$, let
\begin{equation}\label{defobssapceinf}
    \Obsspace = \overline{\bigcup_{\tau>0} \Obsspace_\tau}.
\end{equation}
be the \emph{observable subspace} of the pair $(\sg, \opc)$.
\end{definition}
The sequence $(\Obsspace_\tau)_{\tau>0}$ is a non-decreasing sequence of closed linear subspaces. Hence,
$\Obsspace = \overline{\lim_{\tau\cv+\infty}\Obsspace_\tau}$, and it may be seen as the observable subspace in infinite time of the pair $(\sg, \opc)$.

Our results rely on a weak detectability hypothesis defined as follows.
\begin{definition}\label{weakdet}
The pair $((A(t))_{t\in[0,T]}, C)$ is said to be $\mu$-weakly detectable for some $\mu\geq0$ if for all $t\in[0, T]$, 
\begin{equation}\label{eq_WeaklyDetec}
\psX{\opa(t) x}{x} \le \mu  \normY{\opc x}^2,\qquad \forall x\in\doma.
\end{equation}
\end{definition}
We now state our main results about the convergence of the asymptotic observer and the back and forth observer.
In general, the convergence holds only in the weak topology.
\subsection{Weak asymptotic observer}

\begin{theorem}\label{thmain}
Assume that $T = +\infty$ and $((A(t))_{t\geq0}, C)$ is $\mu$-weakly detectable and $r>\mu$.
Assume that there exist an increasing positive sequence $(t_n)_{n\geq0}\cv+\infty$ and an evolution system $(\sg_\infty(t, s))_{0\leq s\leq t}$ on $\XX$ such that for all $\tau\geq0$,
\begin{equation}\label{E:conv-T-to-Tinfinity}
\|\sg(t_n+t, t_n)-\sg_\infty(t, 0)\|_{\linx} \cv0 \text{~as~} n\cv+\infty
\text{~uniformly in~} t\in[0, \tau],
\end{equation}
Let $\Obsspace$ be
the observable subspace of the pair $(\sg_\infty, \opc)$.
Then for all $\eps_0\in\XX$,
\begin{equation}\label{eq_obs_conv}
    \pobs\sgeps(t_n, 0)\eps_0\cvfl{n\cv+\infty}0.
\end{equation}
Moreover, if $(t_{n+1}-t_n)_{n\geq0}$ is bounded and $\Obsspace = \XX$, then
\eqref{obs} is a weak asymptotic observer of~\eqref{syst}.
\end{theorem}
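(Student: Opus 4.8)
The plan is to first establish a Lyapunov-type estimate showing that $t\mapsto\norm{\sgeps(t,0)\eps_0}^2$ is non-increasing, then extract a limit of the shifted error trajectories along $(t_n)$, identify that limit as lying in the kernel of the Gramian of $(\sg_\infty,\opc)$, and finally use the detectability inequality once more to upgrade this to the stated weak convergence.

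\emph{Step 1: monotonicity of the error norm.} For $\eps_0\in\doma$ the error trajectory $\eps(t)=\sgeps(t,0)\eps_0$ is $C^1$, and differentiating gives $\frac{\dd}{\dd t}\norm{\eps(t)}^2 = 2\psX{(\opa(t)-r\opc^*\opc)\eps(t)}{\eps(t)} = 2\psX{\opa(t)\eps(t)}{\eps(t)} - 2r\normY{\opc\eps(t)}^2 \le 2(\mu-r)\normY{\opc\eps(t)}^2 \le 0$ by $\mu$-weak detectability and $r>\mu$. By density of $\doma$ and the uniform bound \eqref{E:bound-M-omega} on $\sgeps$, the map $t\mapsto\norm{\sgeps(t,0)\eps_0}^2$ is non-increasing for every $\eps_0\in\XX$; in particular it converges to some limit $\ell\ge0$ as $t\to+\infty$. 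Integrating the differential inequality also yields $\int_0^{+\infty}\normY{\opc\sgeps(t,0)\eps_0}^2\,\dd t \le \frac{1}{2(r-\mu)}\norm{\eps_0}^2 < +\infty$.

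\emph{Step 2: a convergent subsequence of shifted errors.} The family $\{\sgeps(t_n,0)\eps_0\}_n$ is bounded in $\XX$, so along a subsequence (not relabelled) it converges weakly to some $\xi\in\XX$. Fix $\tau>0$. On $[0,\tau]$ the shifted error $\eps_n(t):=\sgeps(t_n+t,t_n)\sgeps(t_n,0)\eps_0$ is governed by the evolution system of $(\opa(t_n+\cdot)-r\opc^*\opc)$; using \eqref{E:conv-T-to-Tinfinity} together with the fact that $\opc^*\opc$ is a fixed bounded perturbation, one shows (via the variation-of-constants formula for the perturbed evolution system and a Gronwall argument) that $\sgeps(t_n+t,t_n)$ converges to $\sgeps^\infty(t,0)$ uniformly on $[0,\tau]$, where $\sgeps^\infty$ is the evolution system of $(\opa_\infty(t)-r\opc^*\opc)$ associated with $\sg_\infty$. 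Consequently the weak limit of $\eps_n(t)$ is $\sgeps^\infty(t,0)\xi$ for each $t\in[0,\tau]$.

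\emph{Step 3: the limit lies in $\ker\gram_\infty$, so its projection vanishes.} From Step 1, $\int_{t_n}^{t_n+\tau}\normY{\opc\sgeps(s,0)\eps_0}^2\,\dd s\to0$ as $n\to\infty$. Rewriting this as $\int_0^\tau\normY{\opc\,\eps_n(t)}^2\,\dd t\to0$ and using weak lower semicontinuity of the (convex, continuous) functional $w\mapsto\int_0^\tau\normY{\opc\,\sgeps^\infty(t,0)w}^2\,\dd t$ — after checking the map $w\mapsto \opc\sgeps^\infty(\cdot,0)w\in L^2(0,\tau;\YY)$ is bounded linear — we get $\int_0^\tau\normY{\opc\sgeps^\infty(t,0)\xi}^2\,\dd t=0$. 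Now also use that by Step 1 the norm $\norm{\sgeps(t_n,0)\eps_0}$ converges to $\ell$, so by the same monotonicity applied to $\sgeps^\infty$ one shows $\norm{\sgeps^\infty(t,0)\xi}=\ell$ for all $t$, hence equality throughout the detectability estimate forces $\opc\sgeps^\infty(t,0)\xi=0$ for a.e. $t$, i.e. $\xi\in\bigcap_{\tau>0}\ker\gram_\infty(0,\tau)=\Obsspace^\perp$ where $\Obsspace$ is the observable subspace of $(\sg_\infty,\opc)$. Thus $\pobs\xi=0$. Since this holds for every weakly convergent subsequence, $\pobs\sgeps(t_n,0)\eps_0\cvf0$, which is \eqref{eq_obs_conv}.

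\emph{Step 4: from the subsequence to the full limit.} If $(t_{n+1}-t_n)$ is bounded by some $\delta>0$ and $\Obsspace=\XX$, then \eqref{eq_obs_conv} says $\sgeps(t_n,0)\eps_0\cvf0$; combined with the monotone convergence of $\norm{\sgeps(t,0)\eps_0}$ to $\ell$, testing against a fixed $\psi$ and using weak convergence along $(t_n)$ together with the equicontinuity of $t\mapsto\psX{\sgeps(t,0)\eps_0}{\psi}$ on the intervals $[t_n,t_{n+1}]$ of bounded length (using \eqref{E:bound-M-omega} restricted to bounded intervals, so uniformly bounded operator norm there) yields $\psX{\sgeps(t,0)\eps_0}{\psi}\to0$ as $t\to+\infty$, i.e. a weak asymptotic observer.

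\emph{Main obstacle.} The delicate point is Step 3: one must rule out the possibility that $\xi\neq0$ only "radially", i.e. that the norm is conserved while $\opc\sgeps^\infty(t,0)\xi$ is not identically zero. This is why the argument needs \emph{both} the integral decay of $\normY{\opc\eps(t)}^2$ and the fact that the limit of $\norm{\eps(t)}$ exists; the detectability inequality becomes an equality in the limit and pins $\opc\sgeps^\infty(t,0)\xi$ to zero. A secondary technical hurdle is the uniform convergence $\sgeps(t_n+\cdot,t_n)\to\sgeps^\infty(\cdot,0)$: this does not follow verbatim from \eqref{E:conv-T-to-Tinfinity} since the observer perturbation changes the evolution system, but it follows from a standard perturbation-series / Gronwall estimate because $r\opc^*\opc$ is a fixed bounded operator and the free evolutions converge uniformly on compact time intervals with a common exponential bound.
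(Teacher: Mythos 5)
Your overall architecture (contraction estimate, integral decay of $\normY{C\eps}^2$, weak limit points annihilated by the limiting Gramian, then a shift argument for the full-limit statement) matches the paper's, but Step~2 contains a genuine gap that the paper's proof is specifically built to avoid. You assert that $\sgeps(t_n+t,t_n)$ converges in operator norm, uniformly on compacts, to the perturbed evolution system $\sgeps^\infty(t,0)$ associated with $(\sg_\infty,-rC^*C)$, ``by a Gronwall argument''. Writing that argument out from the Volterra equations
$\sgeps(t_n+t,t_n)=\sg(t_n+t,t_n)-r\int_0^t\sg(t_n+t,t_n+s)C^*C\,\sgeps(t_n+s,t_n)\,\dd s$ and its analogue for $\sgeps^\infty$, the Gronwall step forces you to control $\|\sg(t_n+t,t_n+s)-\sg_\infty(t,s)\|_{\linx}$ for \emph{intermediate} times $0<s<t$. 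Hypothesis \eqref{E:conv-T-to-Tinfinity} only gives this for $s=0$; since evolution operators need not be (boundedly) invertible, convergence of the product $\sg(t_n+t,t_n)=\sg(t_n+t,t_n+s)\sg(t_n+s,t_n)$ does not propagate to the left factor. Everything in your Step~3 that feeds off this convergence — the passage from $\int_0^\tau\normY{C\eps_n(t)}^2\dd t\to0$ to $\int_0^\tau\normY{C\sgeps^\infty(t,0)\eps_n(0)}^2\dd t\to0$, hence the lower-semicontinuity conclusion — therefore rests on an unproven claim. The paper never introduces $\sgeps^\infty$: it uses Duhamel to convert the decay of $\int\normY{C\eps}^2$ into $\gram(t_n,\tau)\eps(t_n)\to0$ strongly, where $\gram$ is the Gramian of the \emph{unperturbed} system, and then proves (Lemma~\ref{lemconv}) that $\gram(t_n,\tau)\to\gram_\infty(0,\tau)$ in operator norm — a statement involving only $\sg(t_n+t,t_n)$, exactly what \eqref{E:conv-T-to-Tinfinity} controls. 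You should restructure Steps 2--3 along those lines.

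Two smaller points. In Step~3, the ``radial'' argument you single out as the main obstacle is both unnecessary (weak lower semicontinuity already yields the vanishing of $\int_0^\tau\normY{C\sgeps^\infty(t,0)\xi}^2\dd t$) and incorrect as stated: a weak limit satisfies only $\norm{\xi}\le\ell$, so you cannot conclude $\norm{\sgeps^\infty(t,0)\xi}=\ell$. Note also that vanishing of $C$ along the \emph{perturbed} flow of $\xi$ must still be converted into $\gram_\infty(0,\tau)\xi=0$; this takes one more line via Duhamel, since $C\sgeps^\infty(\cdot,0)\xi\equiv0$ forces $\sgeps^\infty(\cdot,0)\xi=\sg_\infty(\cdot,0)\xi$. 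Finally, in Step~4 the claimed equicontinuity of $t\mapsto\psX{\sgeps(t,0)\eps_0}{\psi}$ is neither established nor obvious (its derivative involves the unbounded $A(t)$). The paper instead writes $\eps(t_n+s_n)$ as $\sg_\infty(s_n,0)\eps(t_n)$ plus errors controlled by \eqref{E:conv-T-to-Tinfinity} and by $\int\normY{C\eps}$, and then moves $\sg_\infty(s_n,0)$ onto the test function, extracting a strongly convergent subsequence of $\sg_\infty(s_{n_k},0)^*\psi$; that is the argument you should use.
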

The proof of Theorem~\ref{thmain} is given in Section~\ref{secproofthmain}.
In the autonomous context,
every increasing positive sequence $(t_n)_{n\geq0}\cv+\infty$ is such that
$\sg(t_n+t, t_n) = \sg(t)$ for all $t\geq0$.
Hence~\eqref{eq_obs_conv} holds for all such sequence $(t_n)_{n\geq0}$ and with $\Obsspace$ the observable subspace of $(\sg, C)$. This leads to the following corollary.

\begin{corollary}\label{cormain}
Suppose that~\eqref{syst} is autonomous, $(A, C)$ is $\mu$-weakly detectable and $r>\mu$.
Let $\Obsspace$
be the observable subspace of $(\sg, \opc)$.
Then, \eqref{obs} is a weak asymptotic $\Obsspace$-observer of~\eqref{syst}.
\end{corollary}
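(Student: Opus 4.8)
The plan is to obtain Corollary~\ref{cormain} as a direct specialization of Theorem~\ref{thmain}, using that in the autonomous context the evolution system is time-invariant. First, fix an arbitrary increasing positive sequence $(t_n)_{n\geq0}$ with $t_n\cv+\infty$. Since~\eqref{syst} is autonomous, $\sg(t_n+t, t_n)=\sg(t)$ for every $t\geq0$ and every $n$, so hypothesis~\eqref{E:conv-T-to-Tinfinity} holds trivially with $\sg_\infty(t, 0)=\sg(t)$: the semigroup generated by $A$ is an evolution system, and the difference $\sg(t_n+t, t_n)-\sg_\infty(t,0)$ vanishes identically, hence converges to $0$ in $\linx$ uniformly on each $[0, \tau]$. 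Because Definition~\ref{defO} builds $\Obsspace$ only from the Gramian of $\sg$, the observable subspace of $(\sg_\infty, \opc)$ is exactly the $\Obsspace$ of the corollary. Theorem~\ref{thmain} therefore gives, for every $\eps_0\in\XX$,
\begin{equation*}
\pobs\sgeps(t_n, 0)\eps_0\cvfl{n\cv+\infty}0 .
\end{equation*}

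It remains to upgrade this along-a-sequence convergence to the genuine continuous-time weak convergence demanded by the definition of a weak asymptotic $\Obsspace$-observer. I would argue by contradiction: if the conclusion failed, there would exist $\eps_0, \test\in\XX$, a constant $\delta>0$ and times $s_n\cv+\infty$ with $\abs{\psX{\pobs\sgeps(s_n, 0)\eps_0}{\test}}\geq\delta$ for all $n$. After discarding finitely many terms and passing to a subsequence, $(s_n)$ is an increasing positive sequence tending to $+\infty$, hence admissible in the first step; thus $\pobs\sgeps(s_n, 0)\eps_0\cvf0$ and in particular $\psX{\pobs\sgeps(s_n, 0)\eps_0}{\test}\cv0$, contradicting the lower bound. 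Hence $\pobs\sgeps(t, 0)\eps_0\cvf0$ as $t\cv+\infty$ for every $\eps_0\in\XX$, which is the claim.

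Since all the substantive analysis is carried by Theorem~\ref{thmain}, no real obstacle is expected; the only points needing a line of justification are that $\sg_\infty=\sg$ is legitimately an evolution system and that the associated observable subspace is the one named in the statement, together with the routine extraction of an increasing divergent subsequence in the contradiction argument. Note that the ``moreover'' clause of Theorem~\ref{thmain} cannot be invoked directly here, since it presupposes $\Obsspace=\XX$; this is precisely why the subsequence argument is needed in order to accommodate a proper observable subspace.
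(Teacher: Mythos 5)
Your proposal is correct and follows essentially the same route as the paper: the paper likewise observes that in the autonomous context every increasing positive sequence $(t_n)\cv+\infty$ satisfies $\sg(t_n+t,t_n)=\sg(t)$, so \eqref{E:conv-T-to-Tinfinity} holds with $\sg_\infty=\sg$ and \eqref{eq_obs_conv} applies to every such sequence, from which the continuous-time weak convergence follows. Your explicit subsequence/contradiction step and the remark that the ``moreover'' clause is not applicable for a proper $\Obsspace$ merely spell out what the paper leaves implicit.
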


\subsection{Weak back and forth observer}

\begin{theorem}\label{thmainbf}
Assume that $T < +\infty$
and $(\sg(t, s))_{0\leq s, t\leq T}$ is a bi-directional evolution system.
Suppose that both $((A(t))_{t\in[0, T]}, C)$ and $((-A(t))_{t\in[0, T]}, C)$ are $\mu$-weakly detectable and $r>\mu$.
Let $\Obsspace_T$ be the observable subspace at time $T$ of the pair $(\sg, \opc)$.
Then,
the system \eqref{obs2}-\eqref{obs2b}
is a weak back and forth $\Obsspace_T$-observer of~\eqref{syst}.
\end{theorem}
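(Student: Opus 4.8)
The plan is to transpose to the back-and-forth setting the standard scheme for Luenberger observers: a Lyapunov (energy) estimate along the forward and backward error equations, followed by a weak-compactness argument. First I would pass to the error variables. From the trajectory formulas recalled after~\eqref{eps2b}, $\eps^{2n}(t)=\sgepsi(t,0)\eps^{2n}(0)$ and $\eps^{2n+1}(t)=\sgepsii(t,T)\eps^{2n+1}(T)$, while the coupling conditions of~\eqref{eps2}--\eqref{eps2b} read $\eps^{2n+1}(T)=\eps^{2n}(T)$ and $\eps^{2n+2}(0)=\eps^{2n+1}(0)$, so that $\eps^{2n}(0)=\bigl(\sgepsii(0,T)\sgepsi(T,0)\bigr)^{n}\eps_0$. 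Every differential computation below would be carried out first for $\eps_0\in\doma$, for which the relevant trajectories lie in $C^1([0,T];\XX)\cap C^0([0,T];\doma)$, and then extended to arbitrary $\eps_0\in\XX$ by density of $\doma$ in $\XX$ together with boundedness of all the evolution operators involved.

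\emph{Step 1: monotonicity of the iterates and decay of the perturbed output.} Along~\eqref{eps2}, $\mu$-weak detectability of $((\opa(t))_{t\in[0,T]},\opc)$ and $r>\mu$ yield
\[
\frac{\dd}{\dd t}\norm{\eps^{2n}(t)}^2
= 2\psX{\opa(t)\eps^{2n}(t)}{\eps^{2n}(t)} - 2r\normY{\opc\eps^{2n}(t)}^2
\leq -2(r-\mu)\normY{\opc\eps^{2n}(t)}^2 \leq 0 ,
\]
hence $\norm{\eps^{2n}(T)}\le\norm{\eps^{2n}(0)}$ and $2(r-\mu)\int_0^T\normY{\opc\eps^{2n}(t)}^2\,\dd t\le\norm{\eps^{2n}(0)}^2-\norm{\eps^{2n}(T)}^2$. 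Along~\eqref{eps2b}, $\mu$-weak detectability of $((-\opa(t))_{t\in[0,T]},\opc)$ gives
\[
\frac{\dd}{\dd t}\norm{\eps^{2n+1}(t)}^2
= 2\psX{\opa(t)\eps^{2n+1}(t)}{\eps^{2n+1}(t)} + 2r\normY{\opc\eps^{2n+1}(t)}^2
\geq 2(r-\mu)\normY{\opc\eps^{2n+1}(t)}^2 \geq 0 ,
\]
hence $\norm{\eps^{2n+1}(0)}\le\norm{\eps^{2n+1}(T)}$. Chaining the coupling conditions, $\norm{\eps^{2n+2}(0)}=\norm{\eps^{2n+1}(0)}\le\norm{\eps^{2n+1}(T)}=\norm{\eps^{2n}(T)}\le\norm{\eps^{2n}(0)}$, so $(\norm{\eps^{2n}(0)})_{n\in\N}$ is non-increasing, hence convergent; its consecutive squared differences therefore tend to $0$, and combining this with $\norm{\eps^{2n}(T)}\ge\norm{\eps^{2n+2}(0)}$ and the integral bound above gives $\int_0^T\normY{\opc\eps^{2n}(t)}^2\,\dd t\to0$ as $n\to+\infty$.

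\emph{Step 2: bridge to the observability Gramian and conclusion.} Treating $-r\opc^*\opc\eps^{2n}$ as a source term in~\eqref{eps2}, the variation-of-constants formula gives $\sg(t,0)\eps^{2n}(0)=\eps^{2n}(t)+r\int_0^t\sg(t,s)\opc^*\opc\eps^{2n}(s)\,\dd s$. Applying $\opc$, bounding $\|\opc\sg(t,s)\|_{\linxy}\le M e^{\omega T}\|\opc\|_{\linxy}$ via~\eqref{E:bound-M-omega}, and using the Cauchy--Schwarz inequality together with Step~1, the map $t\mapsto\normY{\opc\sg(t,0)\eps^{2n}(0)}^2$ has $L^1(0,T)$-norm bounded by a constant independent of $n$ times $\int_0^T\normY{\opc\eps^{2n}(t)}^2\,\dd t$, whence $\psX{\gram(0,T)\eps^{2n}(0)}{\eps^{2n}(0)}=\int_0^T\normY{\opc\sg(t,0)\eps^{2n}(0)}^2\,\dd t\to0$, \ie $\norm{\gram(0,T)^{1/2}\eps^{2n}(0)}\to0$. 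Now $(\eps^{2n}(0))_n$ is bounded by $\norm{\eps_0}$; if a subsequence satisfies $\eps^{2n_k}(0)\cvf\xi$, then $\gram(0,T)^{1/2}\eps^{2n_k}(0)\cvf\gram(0,T)^{1/2}\xi$ by weak continuity of $\gram(0,T)^{1/2}\in\linx$, so $\gram(0,T)^{1/2}\xi=0$ by uniqueness of weak limits, hence $\xi\in\ker\gram(0,T)^{1/2}=\ker\gram(0,T)=\Obsspace_T^\perp$ and $\proj_{\Obsspace_T}\xi=0$. Thus every subsequence of the bounded sequence $(\proj_{\Obsspace_T}\eps^{2n}(0))_n$ has a further subsequence converging weakly to $0$, so $\proj_{\Obsspace_T}\eps^{2n}(0)\cvf0$, which is exactly the asserted weak back and forth $\Obsspace_T$-observer property.

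The main obstacle is Step~2: since no observability is assumed, smallness of the \emph{perturbed} output $\opc\eps^{2n}$ must be converted into smallness of the \emph{original} observation form $\psX{\gram(0,T)\cdot}{\cdot}$, and the variation-of-constants identity — usable precisely because $T<+\infty$ keeps the operators uniformly bounded — is what effects this conversion. The accompanying soft point is that only \emph{weak} convergence can be obtained: without a coercivity lower bound on $\gram(0,T)$ the strong convergence $\gram(0,T)^{1/2}\eps^{2n}(0)\to0$ cannot be upgraded to $\proj_{\Obsspace_T}\eps^{2n}(0)\to0$, and the subsequence argument of Step~2 is the correct replacement. The remaining routine verifications are the density extension announced in the first paragraph and the elementary identity $\ker\gram(0,T)^{1/2}=\ker\gram(0,T)=\Obsspace_T^\perp$.
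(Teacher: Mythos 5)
Your proof is correct and follows essentially the same route as the paper's: the contraction/energy estimates from the two weak-detectability hypotheses, the resulting decay of $\int_0^T\normY{C\eps^{2n}}^2$, Duhamel's formula to transfer this to the observability Gramian, and a weak-compactness subsequence argument to conclude. The only cosmetic difference is that you drive the quadratic form $\psX{\gram(0,T)\eps^{2n}(0)}{\eps^{2n}(0)}$ (equivalently $\gram(0,T)^{1/2}\eps^{2n}(0)$) to zero, whereas the paper bounds $\norm{\gram(0,T)\eps^{2n}(0)}$ directly; both yield that every weak limit point lies in $\ker\gram(0,T)=\Obsspace_T^\perp$.
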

The proof of Theorem~\ref{thmainbf} is given in Section~\ref{secproofthmainbf}.
Under additional assumptions on the system, the strong convergence of the observers holds.

\subsection{Strong asymptotic observer}

\begin{theorem}\label{thmainstr}
Assume that $T = +\infty$.
Suppose that there exists $\tau>0$ such that $t\mapsto A(t)$ is $\tau$-periodic.
Let $\Obsspace_\tau$ be
the observable subspace at time $\tau$ of the pair $(\sg, \opc)$.
\begin{enumerate}[label = (\roman*)]
    \item\label{item1}
    Suppose that $((A(t))_{t\geq0}, C)$ is $\mu$-weakly detectable and $r>\mu$.
    Assume that $\sgeps(\tau, 0)$ is normal and bounded from below. If $\Obsspace_\tau=\XX$,
then \eqref{obs} is a strong asymptotic observer of~\eqref{syst}.
    \item\label{item2}
    Suppose that $A(t)$ is skew-adjoint for all $t\in\R_+$ and $\sgeps(\tau, 0)$ is normal.
    If $\sg(t, 0)\Obsspace_\tau \subset \Obsspace_\tau$
    and $\sg(t, 0)\Obsspace_\tau^\perp \subset \Obsspace_\tau^\perp$
    for all $t\in[0, \tau]$,
then \eqref{obs} is a strong asymptotic $\Obsspace_\tau$-observer of~\eqref{syst} for all $r>0$.
\end{enumerate}
\end{theorem}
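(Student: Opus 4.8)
\emph{Proof proposal.} The plan is to exploit the $\tau$-periodicity of $t\mapsto\opa(t)$ to reduce the asymptotic behaviour of the error evolution system $\sgeps$ of~\eqref{eps} to the behaviour of the powers of the single operator $K:=\sgeps(\tau,0)$, and then to analyse $K$ through the spectral theorem applied to $K^*K$; the weak detectability (resp.\ skew-adjointness) provides the norm estimates and the observability Gramian pins down the limit.

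\emph{Reduction and dissipation.} Since $\opc$ does not depend on time, the generators $\opa(t)-r\opc^*\opc$ of $\sgeps$ form a $\tau$-periodic family, so by uniqueness of the evolution system attached to a family of generators, $\sgeps(t+\tau,s+\tau)=\sgeps(t,s)$ for all $0\leq s\leq t$. Hence $\sgeps(n\tau,0)=K^n$ for every $n\in\N$, and writing $t=n\tau+s$ with $s\in[0,\tau)$ gives $\sgeps(t,0)=\sgeps(s,0)K^n$; since $\sup_{s\in[0,\tau]}\|\sgeps(s,0)\|_{\linx}<+\infty$, it suffices to control $(K^n)_{n\in\N}$. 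Next, for $\eps_0\in\doma$, differentiating $t\mapsto\norm{\sgeps(t,0)\eps_0}^2$ and using the $\mu$-weak detectability of $((\opa(t))_{t\geq0},\opc)$ together with $r>\mu$ (in case~\ref{item2}, $\psX{\opa(t)x}{x}=0$, so $\mu=0$ and any $r>0$ works), then extending to all $\eps_0\in\XX$ by density, yields
\[
\norm{\sgeps(t,0)\eps_0}^2-\norm{\sgeps(s,0)\eps_0}^2\;\leq\;-2(r-\mu)\int_s^t\normY{\opc\sgeps(\sigma,0)\eps_0}^2\,\dd\sigma\;\leq\;0,
\]
which is an equality when $\mu=0$. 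In particular $\|K\|_{\linx}\leq1$ and $n\mapsto\norm{K^nx}$ is non-increasing for every $x\in\XX$.

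\emph{Spectral analysis and the Gramian.} As $K=\sgeps(\tau,0)$ is normal, $(K^n)^*K^n=(K^*K)^n$, so $\norm{K^nx}^2=\psX{(K^*K)^nx}{x}$. The operator $B:=K^*K$ is self-adjoint with $0\leq B\leq\Id_\XX$; if $E$ is its spectral measure, dominated convergence in the spectral integral gives $\norm{K^nx}^2=\int_{[0,1]}\lambda^n\,\dd\psX{E(\lambda)x}{x}\to\psX{E(\{1\})x}{x}=\norm{E(\{1\})x}^2$ as $n\to+\infty$, and $\ran E(\{1\})=\ker(\Id_\XX-K^*K)=\{x\in\XX:\norm{Kx}=\norm{x}\}$. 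I would then use the Gramian to rule out these ``isometric'' directions: if $\norm{Kx}=\norm{x}$, then $t\mapsto\norm{\sgeps(t,0)x}$ is non-increasing, continuous, and equal at $0$ and $\tau$, hence constant on $[0,\tau]$; substituting this into the estimate above forces $\int_0^\tau\normY{\opc\sgeps(t,0)x}^2\,\dd t=0$, so $\opc\sgeps(t,0)x=0$ on $[0,\tau]$, so $\sgeps(t,0)x=\sg(t,0)x$ on $[0,\tau]$ by uniqueness, whence $\psX{\gram(0,\tau)x}{x}=\int_0^\tau\normY{\opc\sg(t,0)x}^2\,\dd t=0$ and $x\in\ker\gram(0,\tau)=\Obsspace_\tau^\perp$. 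Therefore $\ran E(\{1\})\subseteq\Obsspace_\tau^\perp$.

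\emph{Conclusion.} In case~\ref{item1}, $\Obsspace_\tau=\XX$ forces $\Obsspace_\tau^\perp=\{0\}$, hence $E(\{1\})=0$, hence $\norm{K^nx}\to0$ for every $x$, and then $\norm{\sgeps(t,0)\eps_0}\to0$ by the reduction step, i.e.\ \eqref{obs} is a strong asymptotic observer. In case~\ref{item2}, I would first note that $\Obsspace_\tau^\perp$ is invariant under $\sgeps(t,0)$ for all $t\geq0$: the invariance hypotheses extend from $[0,\tau]$ to $\R_+$ by periodicity, and if $q_0\in\Obsspace_\tau^\perp=\ker\gram(0,\tau)$ then $\opc\sg(t,0)q_0=0$ on $[0,\tau]$, hence on $\R_+$ by iterating period by period, so $\sgeps(t,0)q_0=\sg(t,0)q_0\in\Obsspace_\tau^\perp$. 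Decomposing $\eps_0=p_0+q_0$ with $p_0:=\pobst\eps_0\in\Obsspace_\tau$ and $q_0\in\Obsspace_\tau^\perp$, one gets $\pobst\sgeps(t,0)\eps_0=\pobst\sgeps(t,0)p_0$ and $\norm{\pobst K^np_0}\leq\norm{K^np_0}$; by the spectral step $\norm{K^np_0}^2\to\norm{E(\{1\})p_0}^2$, and since $E(\{1\})p_0\in\ran E(\{1\})\subseteq\Obsspace_\tau^\perp$ whereas $p_0\in\Obsspace_\tau$, orthogonality gives $\norm{E(\{1\})p_0}^2=\psX{E(\{1\})p_0}{p_0}=0$. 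Hence $\norm{K^np_0}\to0$, and using the $\sgeps$-invariance of $\Obsspace_\tau^\perp$ together with $\sgeps(t,0)=\sgeps(s,0)K^n$ ($t=n\tau+s$) yields $\norm{\pobst\sgeps(t,0)\eps_0}\to0$ as $t\to+\infty$ for every $r>0$, i.e.\ \eqref{obs} is a strong asymptotic $\Obsspace_\tau$-observer.

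\emph{Main obstacle.} The delicate point is the passage from the a priori only weak control of the iterates $K^n$ to genuine strong convergence: the normality of $\sgeps(\tau,0)$ is what turns $\norm{K^nx}^2$ into a spectral integral of $K^*K$, and it must be married to the observability Gramian to annihilate the unit-modulus part of the spectrum of $K$. In case~\ref{item2} the extra difficulty is to make the observable and unobservable components decouple along the \emph{whole} flow $t\geq0$ rather than only at the sampling times $n\tau$, which is precisely what the two invariance conditions on $\Obsspace_\tau$ ensure.
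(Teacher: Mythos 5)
Your proposal is correct, and its core mechanism differs genuinely from the paper's. Both proofs share the same skeleton: periodicity reduces everything to the powers $K^n$ of $K=\sgeps(\tau,0)$, normality gives $\norm{K^nx}^2=\psX{(K^*K)^nx}{x}$, and the Gramian/Duhamel argument shows that any $x$ with $\norm{Kx}=\norm{x}$ must lie in $\ker\gram(0,\tau)=\Obsspace_\tau^\perp$. Where you diverge is in how the limit of $\psX{(K^*K)^nx}{x}$ is identified: you apply the spectral theorem to $B=K^*K$ and dominated convergence on the spectral integral to get $\norm{K^nx}^2\to\norm{E(\{1\})x}^2$ with $\ran E(\{1\})=\ker(\Id_\XX-B)\subset\Obsspace_\tau^\perp$, whereas the paper sets $L=K^*K$, invokes the elementary lemma on non-increasing sequences of positive self-adjoint operators (\cite[Lemma~12.3.2]{TW}) to obtain a strong limit $L_\infty$ of $L^n$, proves $L_\infty^2=L_\infty$, and kills $L_\infty$ on $\Obsspace_\tau$ via the strict inequality $\norm{L\eps_0}<\norm{\eps_0}$. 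Your route is shorter and isolates the obstruction cleanly as the spectral projection $E(\{1\})$; the paper's route avoids the spectral theorem altogether (a mild convenience over a real Hilbert space, where one would complexify to get the spectral measure) and reuses machinery from the BFN proof. A second, worthwhile simplification on your side is in case~(ii): the orthogonality $\psX{E(\{1\})p_0}{p_0}=0$ for $p_0\in\Obsspace_\tau$ lets you dispense with the full invariance apparatus of the paper's Lemmas~\ref{lemstable} and~\ref{lemstable2} — you only need $\Obsspace_\tau^\perp$ to be invariant under the observer flow (which you correctly derive from $\sg(t,0)\Obsspace_\tau^\perp\subset\Obsspace_\tau^\perp$ iterated over periods), not the invariance of $\Obsspace_\tau$ itself, and in case~(i) you never need $K$ to be bounded from below. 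One point worth spelling out if you write this up: the extension of the invariance and of the identity $\opc\sg(t,0)q_0=0$ from $[0,\tau]$ to all of $\R_+$ does require the hypothesis $\sg(\tau,0)\Obsspace_\tau^\perp\subset\Obsspace_\tau^\perp$ at each period, as you indicate, so it is genuinely the stated hypothesis and not periodicity alone that carries this step.
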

The proof of Theorem~\ref{thmainstr} is given in Section~\ref{secproofthmainstr}.

\subsection{Strong back and forth observer}

\begin{theorem}\label{thmainbfstr}
Assume that $T < +\infty$
and $(\sg(t, s))_{0\leq s, t\leq T}$ is a bi-directional evolution system.
Let $\Obsspace_T$ be the observable subspace at time $T$ of the pair $(\sg, \opc)$.
\begin{enumerate}[label = (\roman*)]
    \item Suppose that both $((A(t))_{t\in[0, T]}, C)$ and $((-A(t))_{t\in[0, T]}, C)$ are $\mu$-weakly detectable and $r>\mu$.
    Assume that $\sgepsii(0, T)=\sgepsi(T, 0)^*$.
    If $\Obsspace_T=\XX$, then the system \eqref{obs2}-\eqref{obs2b} is a strong back and forth observer of~\eqref{syst}.
    \label{notskew}
    \item\label{itemGH} \cite[Theorem 1.1.2]{haine2014recovering} In the autonomous context, if $A$ is skew-adjoint,
then \eqref{obs} is a strong back and forth $\Obsspace_T$-observer of~\eqref{syst} for all $r>0$.
\end{enumerate}
\end{theorem}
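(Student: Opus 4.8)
The plan is to treat the two items separately, since item \ref{itemGH} is quoted verbatim from Haine and requires no new argument: one simply checks that the autonomous skew-adjoint case makes $(\sg(t,s))$ bi-directional (because $\sg(t)^{-1}=\sg(-t)=\sg(t)^*$), so that the hypotheses of \cite[Theorem 1.1.2]{haine2014recovering} are met, and the conclusion is exactly a strong back and forth $\Obsspace_T$-observer in our terminology. The work is therefore concentrated in item \ref{notskew}.

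For item \ref{notskew}, I would start from the identity already recorded in the text, $\eps^{2n}(0)=\big(\sgepsii(0,T)\sgepsi(T,0)\big)^n\eps_0$, and set $K=\sgepsii(0,T)\sgepsi(T,0)$. The hypothesis $\sgepsii(0,T)=\sgepsi(T,0)^*$ gives $K=\sgepsi(T,0)^*\sgepsi(T,0)$, so $K$ is self-adjoint and positive semidefinite, hence $\eps^{2n}(0)=K^n\eps_0$ with $K=K^*\geq 0$. By Theorem~\ref{thmainbf} we already know $\pobst K^n\eps_0\cvf 0$; since $\Obsspace_T=\XX$ this says $K^n\eps_0\cvf 0$ for every $\eps_0$. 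The $\mu$-weak detectability of $(A(t),C)$ and of $(-A(t),C)$ with $r>\mu$ should moreover force $\|\sgepsi(T,0)x\|\leq\|x\|$ and $\|\sgepsii(T,0)x\|\leq\|x\|$ for all $x$ (the standard energy estimate: $\tfrac{\dd}{\dd t}\tfrac12\|\eps(t)\|^2=\psX{(A(t)-rC^*C)\eps}{\eps}\leq(\mu-r)\|C\eps\|^2\leq 0$, and symmetrically for the backward half-step), so $\|K\|_{\linx}\leq 1$. Thus $K$ is a positive self-adjoint contraction with $K^n\cvf 0$ strongly-in-$n$-weakly.

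The key step is then the elementary spectral fact: if $K$ is a self-adjoint contraction on a Hilbert space with $0\leq K\leq \Id$ and $K^n x\cvf 0$ for all $x$, then in fact $K^n x\cv 0$ strongly for all $x$. I would prove this via the spectral theorem: writing $K=\int_{[0,1]}\lambda\,\dd E_\lambda$, one has $\|K^n x\|^2=\int_{[0,1]}\lambda^{2n}\,\dd\langle E_\lambda x,x\rangle$, which by dominated convergence tends to $\langle E(\{1\})x,x\rangle=\|E(\{1\})x\|^2$. On the other hand $\psX{K^n x}{E(\{1\})x}=\psX{x}{E(\{1\})x}=\|E(\{1\})x\|^2$ for every $n$ (since $E(\{1\})$ reduces $K$ and $K|_{\ran E(\{1\})}=\Id$), while weak convergence $K^n x\cvf 0$ forces this inner product to $0$; hence $E(\{1\})x=0$, and then $\|K^n x\|\cv 0$. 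Combining, $\eps^{2n}(0)=K^n\eps_0\cv 0$ strongly, which is precisely the statement that \eqref{obs2}-\eqref{obs2b} is a strong back and forth observer.

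The main obstacle I anticipate is justifying the symmetry hypothesis interacts correctly with the contraction bounds — i.e. making fully rigorous that $\sgepsii(0,T)=\sgepsi(T,0)^*$ together with the two detectability conditions really yields a self-adjoint \emph{contraction} $K$ rather than merely a self-adjoint operator; one must be careful that the forward error semigroup contracts under $r>\mu$ detectability of $(A(t),C)$ while the backward error semigroup $\sgepsii$ contracts under the detectability of $(-A(t),C)$, and that these combine in the right order in $K=\sgepsii(0,T)\sgepsi(T,0)$. Once the norm bound $\|K\|_{\linx}\leq 1$ is secured, the spectral argument above is routine, and the passage from weak to strong convergence is the same mechanism (no mass at the spectral value $1$) that underlies Haine's result in the skew-adjoint case.
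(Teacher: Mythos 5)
Your proposal is correct, and for the crucial weak-to-strong upgrade it takes a genuinely different route from the paper. Both arguments start identically: set $\opl=\sgepsii(0,T)\sgepsi(T,0)=\sgepsi(T,0)^*\sgepsi(T,0)$, a positive self-adjoint contraction (your small slip writing $\sgepsii(T,0)$ instead of $\sgepsii(0,T)$ for the backward contraction is harmless, since $\|\opl\|_{\linx}\leq\|\sgepsi(T,0)\|_{\linx}^2\leq 1$ already follows from the forward estimate). From there the paper follows Haine's scheme: it shows $\|\opl\eps_0\|<\norm{\eps_0}$ for $\eps_0\neq0$ by combining the energy identity with Duhamel's formula and the injectivity of $\gram(0,T)$, invokes the monotone-operator lemma \cite[Lemma 12.3.2]{TW} to get $\opl^n\to \opl_\infty$ strongly with $\opl_\infty^2=\opl_\infty$, and then uses the strict inequality to force $\opl_\infty=0$. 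You instead recycle the already-proved weak convergence $\opl^n\eps_0\cvf0$ from Theorem~\ref{thmainbf} (legitimate, since all its hypotheses are assumed here and $\Obsspace_T=\XX$ makes $\pobst=\Id_\XX$), and upgrade it via the spectral theorem: $\norm{\opl^n x}^2=\int_{[0,1]}\lambda^{2n}\,\dd\psX{E_\lambda x}{x}\to\norm{E(\{1\})x}^2$ by dominated convergence, while $\psX{\opl^n x}{E(\{1\})x}=\norm{E(\{1\})x}^2$ for all $n$ must tend to $0$ by weak convergence, so $E(\{1\})x=0$. The two limits are of course the same object ($\opl_\infty=E(\{1\})$), but your version is more modular — it isolates the general fact that for a positive self-adjoint contraction, weak null convergence of the powers implies strong null convergence — and it avoids both the Tucsnak--Weiss lemma and the explicit re-use of the Gramian injectivity, which is already encoded in Theorem~\ref{thmainbf}. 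What the paper's route buys in exchange is uniformity with the proof of Theorem~\ref{thmainstr} (it literally reuses the argument from \eqref{psL} onward) and an argument that does not require the full spectral theorem. Your treatment of item~\ref{itemGH} as a citation of \cite[Theorem 1.1.2]{haine2014recovering} matches the paper.
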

The proof of Theorem~\ref{thmainbfstr} is given in Section~\ref{secproofthmainbfstr}.

\section{Discussion on the results}\label{secdisc}

\subsection{About observability}

For infinite-dimensional systems, there are several observability concepts that are not equivalent (see, \eg \cite[Chapter 6]{TW} in the autonomous context), contrary to the case of finite-dimensional systems.
In particular, one can distinguish the two following main concepts.

\begin{definition}[Exact observability]
    The pair
    $((A(t))_{t\in[0, T]}, C)$
    is said to be exactly observable on $(t_0,t_0+\tau)\subset[0, T]$ if there exists $\delta>0$ such that
    \begin{equation}
        \label{eq_exact_obs}
        \psX{\gram(t_0, \tau)\etat_0}{\etat_0} \geq \delta \norm{z_0}^2,\qquad\forall z_0\in\XX.
    \end{equation}
\end{definition}

\begin{definition}[Approximate observability]
The pair
$((A(t))_{t\in[0, T]}, C)$ is said to be approximately observable on $(t_0,t_0+\tau)\subset[0, T]$ if $\gram(t_0, \tau)$ is injective.
\end{definition}

Clearly, the exact observability of a pair on some time interval implies its 
approximate observability, and the concepts are equivalent in finite-dimension.
The approximate observability in time $\tau$ is equivalent to the fact that $\Obsspace_\tau$, the observable subspace in time $\tau$ of $(\sg, \opc)$, is equal to the whole state space $\XX$.
Our results focus on approximate observability-like assumptions, since the exact observability has already been deeply investigated for both the asymptotic observer and the BFN algorithm (see \eg \cite{ito2011time, Ramdani}).
When the observable subspace is not the full state space, the observers reconstruct only the observable part of the state.

\subsection{About weak detectability}

\begin{remark}
A pair $((A(t))_{t\geq0}, C)$ is said to be \emph{detectable}  if for all pairs of trajectories $(\etat_1, \etat_2)$ of \eqref{syst}, if $Cz_1(t) = Cz_2(t)$ for all $t\geq0$, then $(z_1(t) - z_2(t)) \cv 0$ as $t\cv+\infty$.
This definition is equivalent to the usual definition of detectability in finite-dimension. However, several definitions may be chosen in infinite-dimension, that are all equivalent in finite-dimension.
In this remark, we show how \eqref{eq_WeaklyDetec} may be seen as a weak detectability hypothesis.
Let $((A(t))_{t\in[0, T]}, C)$ be $\mu$-weakly detectable for some $\mu\geq0$.
Then Lemma~\ref{lemcont}, that is proved in Section~\ref{secproofmain}, states that $\sgeps$ is a contraction evolution system,\ie $\|\sgeps(t, s)\|_{\linx}\leq1$ for $0\leq s\leq t\leq T$.
Consider $(\etat_1, \etat_2)$ two trajectories of \eqref{syst} such that $\opc\etat_1(t) = \opc\etat_2(t)$ for all $t\in[0, T]$.
Then $\etat_1$ and $\etat_2$ are also trajectories of \eqref{obs},
and $\etat_1-\etat_2$ is a trajectory of~\eqref{eps}.
Therefore, for all $0\leq s\leq t\leq T$,
\begin{align*}
    \norm{\etat_1(t)-\etat_2(t)}
    = \norm{\sgeps(t, s)(\etat_1(s)-\etat_2(s))}
    \leq \norm{\etat_1(s)-\etat_2(s)}.
\end{align*}
Hence, $[0, T]\ni t\mapsto\norm{\etat_1(t)-\etat_2(t)}$ is non-increasing. This property is indeed weaker than the usual detectability hypothesis, which would state that $\norm{\etat_1(t)-\etat_2(t)}$ tends to $0$ as $t$ goes to infinity.

\end{remark}

\begin{remark}
When stating that a pair  $((A(t))_{t\in[0, T]}, C)$ is $\mu$-weakly detectable, we actually state that the pair is \emph{uniformly} weakly detectable, in the sense that the detectability constant $\mu$ is independent of the time $t\in[0, T]$. Therefore, this assumption is stronger than the weak detectability of each pair $(A(t), C)$ for $t\in[0, T]$. However, if $T<+\infty$ or $t\mapsto A(t)$ is periodic, then the two statements are equivalent, due to the continuity of $[0, T]\ni t\mapsto A(t)x$ for all $x\in\doma$.
\end{remark}

\begin{remark}\label{remdiss}
If $\opa(t)$ is a \emph{dissipative} operator for all $t\in[0, T]$, that is,
\begin{equation}
    \psX{A(t)x}{x} \leq 0,\qquad \forall t\in[0, T],
\end{equation}
then the pair $((A(t))_{t\in[0, T]}, C)$ is $0$-weakly detectable for any output operator $\opc\in\linxy$.
This assumption is the one usually made in the literature to prove the weak convergence of a Luenberger-like observer in infinite-dimension (see, \eg \cite{slemrod1974note, Celle, xu1995observer}).
Therefore, the weak detectability hypothesis may be seen as a weakening of the dissipativity hypothesis, relying on the output operator.
\end{remark}

\begin{remark}\label{rmP}
If there exist a bounded self-adjoint operator $P\in\linx$, $\alpha>0$ and $\mu\geq0$ such that
\begin{equation}\label{Lyap}
\psX{x}{Px}\geq p\norm{x}^2, \quad \psX{Px}{A(t)x}\leq \mu\normY{Cx}^2, \quad \forall x\in \doma,\ \forall t\in[0, T],
\end{equation}
then
the pair $((A(t))_{t\in[0, T]}, C)$ is $\mu$-weakly detectable
provided one endows the Hilbert space $\XX$ with the inner product $\psX{P \cdot}{\cdot}$.
Note that in this case the operator $C^*$ is the adjoint of $C\in\linxy$ with respect to this new inner product,
\ie $\psY{C\cdot}{\cdot} = \psX{P \cdot}{C^*\cdot}$.
Actually, if $\XX$ is finite-dimensional, the existence of $P$ (which is then a positive-definite matrix) such that \eqref{Lyap} holds is a necessary condition for the existence of an asymptotic observer.
\end{remark}

\begin{remark}
In the context of BFN, we require that
$((A(t))_{t\in[0, T]}, C)$ and $((-A(t))_{t\in[0, T]}, C)$ are $\mu$-weakly detectable.
This is equivalent to state that
\begin{equation}
\abs{\psX{\opa(t) x}{x}} \le \mu  \normY{\opc x}^2,\qquad \forall x\in\doma.
\end{equation}
Note that the considered inner product on $\XX$ is the same for both the forward and the backward observer.
If one must change the inner product with a self-adjoint operator $P$ as in Remark~\ref{rmP}, then this change must be done for both observers.
In \cite{hoang:hal-01104899}, the authors proved in the autonomous finite-dimensional context the existence of such a common operator $P$ for both $A$ and $-A$, but the question remains open in infinite-dimension.
\end{remark}

\begin{remark}
The parameter $r>0$ is the observer gain.
If $\opa(t)$ is a \emph{dissipative} operator for all $t\in[0, T]$, then the convergence results hold for all gain $r>0$. Otherwise, the gain must be chosen high enough in order to make up the lack of dissipativity, which is replaced by weak detectability.
Obviously, if a pair is $\mu$-weakly detectable for some $\mu\geq0$, then it is also $\lambda$-weakly detectable for all $\lambda\geq\mu$. This class of observer is what is called \textit{observers with infinite gain margin} since $r$ can be taken as large as requested.
\end{remark}

\subsection{About the results}

\begin{remark}
Our results are linked with the existing literature in the following way.
If $A(t) = A+\sum_{i=1}^pu_i(t)B_i$ where $A, B_0,\dots,B_p$ are skew-adjoint generators of unitary groups on $\XX$ and $u_1,\dots,u_p$ are bounded, then
Theorem~\ref{thmain} is an extension of \cite[Theorem 7]{Celle} to the case where the system is not approximately observable in some finite time. The proofs of Theorems~\ref{thmain} and \ref{thmainbf} follow the path of this seminal paper.
In the autonomous context, we recover the usual weak asymptotic observer in Corollary~\ref{cormain}.
Theorem~\ref{thmainbf} states that only weak convergence of the BFN algorithm holds in general.
Following the way paved by G. Haine in \cite{haine2014recovering}, we prove in Theorem~\ref{thmainbfstr} that the convergence is actually strong under some additional assumptions.
We recall and extend \cite[Theorem 1.1.2]{haine2014recovering} in Theorem~\ref{thmainbfstr}. In particular, we consider non-autonomous systems and do not necessarily assume that $A(t)$ is skew-adjoint for all $t\in[0, T]$.
Moreover, we adapt this technique to the usual asymptotic observer to prove the strong convergence in the case of periodic systems in Theorem~\ref{thmainstr}.
We do not investigate any exact observability-like assumptions, since \cite{slemrod1972linear, liu1997locally, urquiza2005rapid} and \cite{Ramdani, ito2011time} solved the question, at least in the autonomous case, in the asymptotic context and back and forth context respectively.
\end{remark}

\begin{remark}
In Theorem~\ref{thmain}, one of the hypotheses is the existence of an increasing positive sequence $(t_n)_{n\geq0}\cv+\infty$ and an evolution system $(\sg_\infty(t, s))_{0\leq s\leq t}$ on $\XX$ such that $\|\sg(t_n+t, t_n)-\sg_\infty(t, 0)\|_{\linx} \cv0$ as $n\cv+\infty$
uniformly in $t\in[0, \tau]$
for all $\tau\geq0$.
Checking this hypothesis may be a difficult task in general.
However, \cite[Theorem 10.2]{ito} states sufficient conditions on the family of generators $(A(t))_{t\geq0}$ for the existence of such a sequence. 
In Section~\ref{sectrans}, we show how to check this property on a time-varying one-dimensional transport equation with periodic boundary conditions.
\end{remark}

\begin{remark}\label{rmcca}
One of the steps of the proof of Theorem~\ref{thmain} (see Section~\ref{secproofmain}) is to show that for all $\eps_0\in\doma$, $\eps:t\mapsto\sgeps(t, 0)\eps_0$ satisfies
\begin{equation}
    \int_{t_0}^{t_0+\tau} \normY{C\eps(t)}^2\diff t
    \cvl{t_0\cv+\infty}0,\qquad\forall\tau\geq0.
\end{equation}
This does not yields \emph{a priori} that $\opc\eps(t)\cv0$ as $t$ goes to infinity.
However, if there exists a positive constant $\alpha>0$ such that for all $t\geq0$,
\begin{equation}\label{eqCCA}
    \norm{C^*CA(t)x} \leq \alpha \norm{x},
\end{equation}
then
$\opc\eps(t)\cvl{t\cv+\infty}0$. 
Indeed, \eqref{eqintC} will yield
\begin{align}
    \int_0^{+\infty} \normY{C\eps(t)}^2\diff t < + \infty.
\end{align}
Moreover, for all $t\geq0$,
\begin{align*}
\frac12    \frac{\dd}{\dd t}\normY{C\eps(t)}^2
    &=  \psY{C\eps(t)}{C\dot\eps(t)}\\
    &=  \psY{C\eps(t)}{CA\eps(t)} - r \psY{C\eps(t)}{CC^*C\eps(t)}\\
    &= \psX{\eps(t)}{C^*CA\eps(t)} - r \norm{C^*C\eps(t)}^2\\
    &\leq \alpha \norm{\eps_0}^2
\end{align*}
since $\sgeps(t, 0)$ is proved to be a contraction in Lemma~\ref{lemcont}.
Thus, $\normY{C\epsilon}^2$ is an integrable positive function, with bounded derivated. Hence, according to Barbalat's lemma, $\normY{C\epsilon(t)}^2 \cv 0$ as $t\cv+\infty$.

A similar result (with a similar proof) hold for the BFN algorithm. Assume that all the hypotheses
of Theorem~\ref{thmainbf} hold.
If $C^*CA$ is bounded as an operator from $(\doma, \norm{\cdot})$ to $(\XX, \norm{\cdot})$, then
$\opc\eps^{2n}(0)\cv0$ as $n\cv+\infty$.
\end{remark}

\section{Proofs of the results}\label{secproof}

This section is devoted to the proofs of the results stated in Section~\ref{secmain}.
The following remark allows us to reformulate the weak convergence results.

\begin{remark}
For any closed linear subspace $\Obsspace$ of $\XX$ and any sequence $(x_n)_{n\geq0}$ in $\XX$, recall that
$\pobs x_n \cvf0$ as $n\cv+\infty$ if and only if, for all $\psi\in\XX$, 
$\psX{\pobs x_n}{\psi}\cv0$.
As an orthogonal projection, $\pobs$ is a self-adjoint operator, \ie $\pobs = \pobs^*$, and $\ran\pobs=\Obsspace$.
Hence, $\pobs x_n \cvf0$ as $n\cv+\infty$ if and only if, for all $\psi\in\Obsspace$, 
$\psX{\pobs x_n}{\psi}\cv0$.
\end{remark}
All the weak convergence results are proved in the following in accordance with this remark. For example, to prove that \eqref{obs} is a weak asymptotic $\Obsspace$-observer, we prove that 
$\psX{\pobs \sgeps(t, 0)\eps_0}{\psi}\cv0$ as $t\cv+\infty$ for all $\eps_0\in\XX$ and all $\psi\in\Obsspace$. We proceed similarly in the back and forth context.
\begin{lemma}\label{lemdoma}
Let $(\opl_n)_{n\in\N}$ be a bounded sequence in $\linx$,
\ie such that
$\sup_{n\in\N}\|\opl_n\|_{\linx}
\leq \bornL$
for some $\bornL>0$.
Let $U, V \subset X$.
\begin{enumerate}[label = (\roman*)]
\item If
\begin{align*}
    \opl_n\eps_0\cvl{n\cv+\infty}0,\qquad
    \forall\eps_0\in U
\end{align*}
then
\begin{align*}
    \opl_n\eps_0\cvl{n\cv+\infty}0,\qquad
    \forall\eps_0\in\overline{U}.
\end{align*}
\label{itemstr}
    \item If
\begin{align*}
    \psX{\opl_n\eps_0}{\psi}\cvl{n\cv+\infty}0,\qquad
    \forall\eps_0\in U,\quad
    \forall\psi\in V,
\end{align*}
then
\begin{align*}
    \psX{\opl_n\eps_0}{\psi}\cvl{n\cv+\infty}0,\qquad
    \forall\eps_0\in\overline{U},\quad
    \forall\psi\in\overline{V}.
\end{align*}
\label{itemw}
\end{enumerate}
\end{lemma}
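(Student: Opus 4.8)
The plan is to prove both items by one and the same elementary device: the standard ``$\eps/3$'' estimate by which pointwise convergence of a \emph{uniformly} bounded sequence of operators propagates from a set to its closure. We may assume $\bornL>0$, the case $\bornL=0$ being trivial.

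For \ref{itemstr}, I would fix $\eps_0\in\overline U$ and $\delta>0$, use density of $U$ in $\overline U$ to pick $u\in U$ with $\norm{\eps_0-u}<\delta/(2\bornL)$, and split
\[
\norm{\opl_n\eps_0}\le\norm{\opl_n(\eps_0-u)}+\norm{\opl_n u}\le\bornL\norm{\eps_0-u}+\norm{\opl_n u}<\frac{\delta}{2}+\norm{\opl_n u}.
\]
The first summand is already $<\delta/2$ \emph{uniformly in $n$}; the second tends to $0$ by hypothesis, hence is $<\delta/2$ for $n$ large, giving $\norm{\opl_n\eps_0}<\delta$ eventually. Since $\delta$ was arbitrary, $\opl_n\eps_0\cvl{n\cv+\infty}0$.

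For \ref{itemw}, I would extend the convergence one variable at a time, reusing the same split. First, fixing $\eps_0\in U$: the functionals $\psi\mapsto\psX{\opl_n\eps_0}{\psi}$ satisfy $\abs{\psX{\opl_n\eps_0}{\psi}}\le\bornL\norm{\eps_0}\norm{\psi}$ and vanish in the limit for $\psi\in V$, so the argument of \ref{itemstr} (with $\bornL\norm{\eps_0}$ in place of $\bornL$) yields $\psX{\opl_n\eps_0}{\psi}\cvl{n\cv+\infty}0$ for all $\eps_0\in U$, $\psi\in\overline V$. Then, fixing $\psi\in\overline V$ and writing $\psX{\opl_n\eps_0}{\psi}=\psX{\eps_0}{\opl_n^*\psi}$ with $\norm{\opl_n^*\psi}\le\bornL\norm{\psi}$, the same argument applied in the first variable upgrades $\eps_0\in U$ to $\eps_0\in\overline U$, which is the assertion. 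Equivalently, one can do both density steps at once via the three-term bound $\abs{\psX{\opl_n\eps_0}{\psi}}\le\bornL\norm{\eps_0}\norm{\psi-v}+\bornL\norm{\eps_0-u}\norm{v}+\abs{\psX{\opl_n u}{v}}$ with $u\in U$, $v\in V$ chosen close to $\eps_0$, $\psi$.

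I do not anticipate any genuine obstacle: this is a routine consequence of the uniform bound $\sup_{n\in\N}\|\opl_n\|_{\linx}\le\bornL$. The only points that must not be overlooked are that it is the \emph{uniformity} of the bound (not mere pointwise boundedness) that makes the tail term $\opl_n(\eps_0-u)$ controllable independently of $n$, and, in \ref{itemw}, that the two density extensions must be performed successively (or combined in the three-term estimate above) rather than ``simultaneously in the limit''.
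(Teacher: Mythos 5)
Your proposal is correct and follows essentially the same route as the paper: the standard density argument in which the uniform bound $\sup_n\|\opl_n\|_{\linx}\le\bornL$ controls the tail term $\opl_n(\eps_0-\tilde\eps_0)$ independently of $n$. For \ref{itemw} the paper performs both density extensions simultaneously via a four-term split, which is just a regrouping of your three-term bound, so your successive one-variable-at-a-time variant (via $\opl_n^*$) is only a cosmetic difference.
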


\begin{proof}[Proof of \ref{itemstr}]
Let $\bornL$ be a bound of the sequence $(\opl_n)_{n\in\N}$ in $\linx$.
Let $\eps_0\in \overline{U}$ and $\eta>0$.
Then there exists $\tilde{\eps}_0\in U$ such that
$\norm{\eps_0 - \tilde{\eps}_0}\leq \eta
$.
Moreover, there exists $N\in\N$ such that for all $n\geq N$,
$    \norm{\opl_n\tilde{\eps}_0}
    \leq \eta
$.
Then, for all $n\geq N$,
\begin{align*}
    \norm{\opl_n\eps_0}
    \leq \norm{\opl_n\tilde{\eps}_0}
    +  \bornL \norm{\tilde{\eps}_0-\eps_0}
    \leq (1+\bornL)\eta
\end{align*}
since $\|\opl_n\|_{\linx}\leq \bornL$.
Hence $\opl_n\eps_0\cv0$ as $n\cv+\infty$.
\end{proof}

\begin{proof}[Proof of \ref{itemw}]
Let $\eps_0\in \overline{U}$, $\psi\in \overline{V}$ and $\eta>0$.
Then there exist $\tilde{\eps}_0\in U$ and $\tilde{\psi}\in V$ such that
$\norm{\eps_0 - \tilde{\eps}_0}\leq \eta$
and $\norm{\psi - \tilde{\psi}}\leq \eta$.
Moreover, there exists $N\in\N$ such that for all $n\geq N$,
$    \abs{\psX{\opl_n\tilde{\eps}_0}{\tilde{\psi}}}
    \leq \eta
$.
Then, for all $n\geq N$,
\begin{align*}
    \abs{\psX{\opl_n\eps_0}{\psi}}
    &\leq
    \abs{\psX{\opl_n\tilde{\eps}_0}{\tilde{\psi}}}
    +
    \abs{\psX{\opl_n(\eps_0-\tilde{\eps}_0)}{\tilde{\psi}}}\\
    &\quad+
    \abs{\psX{\opl_n\tilde{\eps}_0}{\psi-\tilde{\psi}}}
    +
    \abs{\psX{\opl_n(\eps_0-\tilde{\eps}_0)}{\psi-\tilde{\psi}}}\\
    &\leq \left(1+ \bornL \norm{\tilde{\psi}}+ \bornL \norm{\tilde{\eps}_0} + \bornL \eta\right)\eta
\end{align*}
since $\|\opl_n\|_{\linx}\leq \bornL$.
Hence $\psX{\opl_n\eps_0}{\psi}\cv0$ as $n\cv+\infty$.
\end{proof}

\begin{remark}
An operator $\opl\in\linx$ is said to be a contraction if
$\|\opl\|_{\linx}\leq1$.
If $(\opl_n)_{n\in\N}$ is a sequence of contractions in $\linx$, then it is uniformly bounded by $1$, hence Lemma~\ref{lemdoma} does apply.
In the paper, we use Lemma~\ref{lemdoma} only on sequences of contractions.
\end{remark}

\subsection{Proof of Theorem~\ref{thmain}}
\label{secproofthmain}

\label{secproofmain}
The proof relies on the two following lemmas. The first one shows how the weak detectability is used in the proof, while the second one states a continuity property of the observability Gramian.
We adapt the steps of the proof of \cite[Theorem 7]{Celle}.
In this section, assume that $T=+\infty$.
\begin{lemma}\label{lemcont}
If $((A(t))_{t\geq0}, C)$ is $\mu$-weakly detectable and $r>\mu$, then $\sgeps$ is a contraction evolution system, that is,
\begin{equation}
\|\sgeps(t, s)\|_{\linx} \leq1,\qquad \forall t\geq s\geq 0.
\end{equation}
\end{lemma}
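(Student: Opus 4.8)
The plan is to run the standard energy (Lyapunov) argument for contraction evolution systems: differentiate the squared $\XX$-norm along trajectories of the error system \eqref{eps} and use the weak detectability inequality to make the derivative non-positive, then conclude by density.

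First I would fix $s\geq0$ and a smooth initial datum $\eps_0\in\doma$, and set $\eps(t)=\sgeps(t,s)\eps_0$. Recall that $(\opa(t)-\rr\opc^*\opc)_{t\geq0}$ is a stable family of generators sharing the domain $\doma$, so by the regularity statement recalled in Section~\ref{secprob} one has $\eps\in C^0([s,+\infty);\doma)\cap C^1([s,+\infty);\XX)$. Consequently $t\mapsto\norm{\eps(t)}^2$ is continuously differentiable, and, using that $\opc^*$ is the adjoint of $\opc$ so that $\psX{\opc^*\opc x}{x}=\normY{\opc x}^2$,
\[
\tfrac12\frac{\dd}{\dd t}\norm{\eps(t)}^2=\psX{\dot\eps(t)}{\eps(t)}=\psX{\opa(t)\eps(t)}{\eps(t)}-\rr\normY{\opc\eps(t)}^2 .
\]

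Next I would invoke $\mu$-weak detectability (Definition~\ref{weakdet}): since $\eps(t)\in\doma$, it gives $\psX{\opa(t)\eps(t)}{\eps(t)}\le\mu\normY{\opc\eps(t)}^2$, hence
\[
\tfrac12\frac{\dd}{\dd t}\norm{\eps(t)}^2\le(\mu-\rr)\normY{\opc\eps(t)}^2\le0
\]
because $\rr>\mu$. Integrating from $s$ to $t$ yields $\norm{\sgeps(t,s)\eps_0}\le\norm{\eps_0}$ for every $\eps_0\in\doma$ and every $t\ge s\ge0$. Finally, since $\doma$ is dense in $\XX$ and each $\sgeps(t,s)$ is a bounded operator, the inequality extends by continuity to all $\eps_0\in\XX$, i.e. $\|\sgeps(t,s)\|_{\linx}\le1$. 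No serious obstacle is expected; the only point deserving care is the legitimacy of differentiating $t\mapsto\norm{\eps(t)}^2$, which is precisely why the estimate is first established for $\eps_0\in\doma$ and then passed to the whole space by density.
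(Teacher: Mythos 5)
Your proposal is correct and follows essentially the same argument as the paper: differentiate $t\mapsto\norm{\eps(t)}^2$ for $\eps_0\in\doma$, bound the derivative by $-(r-\mu)\normY{C\eps(t)}^2\le0$ via weak detectability, and extend to all of $\XX$ by density. No issues.
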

\begin{proof}
Since $\doma$ is dense in $\XX$, it is sufficient to show that
\begin{equation}\label{eqcont}
    \norm{\sgeps(t, t_0) \eps_0} \leq \norm{\eps_0}
\end{equation}
for all $\eps_0\in\doma$ and all $t\geq t_0\geq0$.
Let $t_0\geq0$, $\eps_0\in\doma$ and set $\eps(t) = \sgeps(t, t_0) \eps_0$ for all $t\geq t_0$. Then $\eps\in C^1([0, +\infty), X)$ and for all $t\geq t_0$,
\begin{align}
\frac12\frac{\dd}{\dd t}\norm{\epsilon(t)}^2
&= \psX{\epsilon(t)}{\dot{\epsilon}(t)}\nonumber\\
&= \psX{\epsilon(t)}{\opa(t)\epsilon(t)} - r \psX{\epsilon(t)}{\opc^*\opc\epsilon(t)}\nonumber\\
&\leq -(r-\mu) 
\normY{C\epsilon(t)}^2
\quad
\text{(since $((A(t))_{t\geq0}, C)$ is $\mu$-weakly detectable)}
\label{epsnoninc}
\\
&\leq0\nonumber
\end{align}
since $r>\mu$. Hence $[t_0, +\infty)\ni t \mapsto \norm{\epsilon(t)}^2$ is non increasing, which yields \eqref{eqcont} since $\eps(t_0) = \eps_0$.
\end{proof}

\begin{lemma}\label{lemconv}
If there exist an increasing positive sequence $(t_n)_{n\geq0}\cv+\infty$ and an evolution system $(\sg_\infty(t, s))_{0\leq s\leq t}$ on $\XX$ such that $\|\sg(t_n+t, t_n)-\sg_\infty(t, 0)\|_{\linx} \cv0$ as $n\cv+\infty$ for all $t\geq0$,
then
$\|\gram(t_n, \tau)-\gram_\infty(0, \tau)\|_{\linx}\cv0$ as $n\cv+\infty$.
\end{lemma}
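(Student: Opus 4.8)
The plan is to reduce both observability Gramians to integrals over the \emph{fixed} interval $[0,\tau]$ by translating the time variable, and then to pass to the limit under the integral sign via the dominated convergence theorem.

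First I would write, for every $n$, using the substitution $t = t_n + s$,
\[
\gram(t_n,\tau) = \int_0^\tau \sg(t_n+s, t_n)^*\opc^*\opc\,\sg(t_n+s, t_n)\,\dd s,
\]
and likewise, by definition of the observability Gramian of the pair $(\sg_\infty,\opc)$,
\[
\gram_\infty(0,\tau) = \int_0^\tau \sg_\infty(s,0)^*\opc^*\opc\,\sg_\infty(s,0)\,\dd s.
\]
Writing $P_n(s) = \sg(t_n+s, t_n)$ and $P(s) = \sg_\infty(s,0)$, the telescoping identity $P_n^*\opc^*\opc P_n - P^*\opc^*\opc P = P_n^*\opc^*\opc(P_n-P) + (P_n-P)^*\opc^*\opc P$ gives, for each $s\in[0,\tau]$,
\[
\|P_n(s)^*\opc^*\opc P_n(s) - P(s)^*\opc^*\opc P(s)\|_{\linx} \leq \|\opc\|_{\linxy}^2\big(\|P_n(s)\|_{\linx}+\|P(s)\|_{\linx}\big)\,\|P_n(s)-P(s)\|_{\linx}.
\]

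Next I would supply the ingredients for dominated convergence. By \eqref{E:bound-M-omega}, $\|P_n(s)\|_{\linx} = \|\sg(t_n+s,t_n)\|_{\linx} \leq Me^{\omega s}\leq Me^{\omega\tau}$ for all $s\in[0,\tau]$ and all $n$; passing to the limit in $\|P(s)\|_{\linx}\leq\|P_n(s)\|_{\linx}+\|P_n(s)-P(s)\|_{\linx}$ and using the hypothesis $\|P_n(s)-P(s)\|_{\linx}\to0$ gives $\|P(s)\|_{\linx}\leq Me^{\omega\tau}$ as well. Hence the right-hand side of the displayed inequality is bounded, uniformly in $n$ and in $s\in[0,\tau]$, by the integrable constant $2\big(Me^{\omega\tau}\|\opc\|_{\linxy}\big)^2$, while it converges pointwise in $s$ to $0$ as $n\to+\infty$. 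Measurability in $s$ of the integrand, hence of its norm, follows from the fact that $\gram(t_n,\tau)$ and $\gram_\infty(0,\tau)$ are well-defined Bochner integrals. The dominated convergence theorem then yields
\[
\|\gram(t_n,\tau)-\gram_\infty(0,\tau)\|_{\linx} \leq \int_0^\tau \|P_n(s)^*\opc^*\opc P_n(s) - P(s)^*\opc^*\opc P(s)\|_{\linx}\,\dd s \cvl{n\cv+\infty}0.
\]

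The change of variables and the telescoping estimate are routine. The only point requiring care is that the hypothesis provides the convergence $\|\sg(t_n+s,t_n)-\sg_\infty(s,0)\|_{\linx}\to0$ only \emph{pointwise} in $s$, and not uniformly on $[0,\tau]$; this is precisely why one cannot simply pull a supremum out of the integral and must instead invoke dominated convergence, which is made possible by the uniform exponential bound \eqref{E:bound-M-omega} furnishing an integrable majorant.
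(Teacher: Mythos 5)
Your proof is correct, and it uses the same telescoping decomposition and the same exponential bound \eqref{E:bound-M-omega} as the paper; the only genuine difference is the final limiting argument. The paper bounds $\|\gram(t_n,\tau)-\gram_\infty(0,\tau)\|_{\linx}$ by $\tau$ times the supremum over $t\in[0,\tau]$ of the integrand's norm and then invokes \eqref{E:conv-T-to-Tinfinity}, i.e.\ the \emph{uniform} convergence in $t\in[0,\tau]$ that is assumed in Theorem~\ref{thmain} (but not in the lemma's own statement, which only asserts pointwise convergence in $t$). You correctly observe that pulling out the supremum is not licensed by the pointwise hypothesis alone, and you instead keep the integral and apply dominated convergence, with $2\bigl(Me^{\omega\tau}\|\opc\|_{\linxy}\bigr)^2$ as the integrable majorant; the measurability of the integrand's norm follows from strong continuity of the evolution systems in $s$. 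What your route buys is a proof of the lemma exactly as stated, under the weaker pointwise hypothesis; what the paper's route buys is a one-line conclusion, at the cost of silently importing the uniform hypothesis from the theorem. Both are sound in the context where the lemma is actually used, since Theorem~\ref{thmain} does assume the uniform version.
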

\begin{proof}
For all $\etat_0\in\XX$,
\begin{multline*}
    \norm{(\gram(t_n, \tau)-\gram_\infty(0, \tau))\etat_0}\\
    \begin{aligned}
    &\leq
    \int_0^\tau
    \|\sg(t_n+t, t_n)^*\opc^*\opc\sg(t_n+t, t_n)
    -
    \sg_\infty(t, 0)^*\opc^*\opc\sg_\infty(t, 0)\|_{\linx}
    \norm{\etat_0}\dd t\\
    &\leq
        \tau \norm{\etat_0} \sup_{t\in[0, \tau]} \|\sg(t_n+t, t_n)^*\opc^*\opc\sg(t_n+t, t_n)
    -
    \sg_\infty(t, 0)^*\opc^*\opc\sg_\infty(t, 0)\|_{\linx}.
    \end{aligned}
\end{multline*}
For all $t\in[0, \tau]$,
\begin{multline*}
    \|\sg(t_n+t, t_n)^*C^*C\sg(t_n+t, t_n)
    -
    \sg_\infty(t, 0)^*C^*C\sg_\infty(t, 0)\|_{\linx}\\
    \begin{aligned}
    &\leq \|(\sg(t_n+t, t_n)- \sg_\infty(t, 0))^*\|_{\linx}\|C^*C\sg(t_n+t, t_n) \|_{\linx}\\
    &\quad+
    \|\sg_\infty(t, 0)^*C^*C\|_{\linx}\|\sg(t_n+t, t_n)-\sg_\infty(t, 0)\|_{\linx}\\
    &\leq \|\sg(t_n+t, t_n)-\sg_\infty(t, 0)\|_{\linx}
    \|C\|_{\linxy}^2\big(\|\sg(t_n+t, t_n) \|_{\linx}
    \end{aligned}\\
    \begin{aligned}
    &+\|\sg_\infty(t, 0)\|_{\linx}\big)
    \end{aligned}
\end{multline*}
Recall that $\|\sg(t_n+t, t_n) \|_{\linx}\leq Me^{\omega t}$ by \eqref{E:bound-M-omega} and that \eqref{E:conv-T-to-Tinfinity} implies
$\|\sg(t_n+t, t_n) \|_{\linx}\cv \|\sg_\infty(t, 0) \|_{\linx}$ as $n\to+\infty$. Hence, we also have
$\|\sg_\infty(t, 0) \|_{\linx}\leq Me^{\omega t}$.
Thus,
\begin{multline*}
\|\sg(t_n+t, t_n)^*C^*C\sg(t_n+t, t_n)
-
\sg_\infty(t, 0)^*C^*C\sg_\infty(t, 0)\|_{\linx}
\\
\leq 2\|C\|_{\linxy}^2Me^{\omega t}\|\sg(t_n+t, t_n)-\sg_\infty(t, 0)\|_{\linx}.
\end{multline*}
Hence,
according to \eqref{E:conv-T-to-Tinfinity},
$\|\gram(t_n, \tau)-\gram_\infty(0, \tau)\|_{\linx}\cv0$ as $n\cv+\infty$.
\end{proof}

\begin{proof}[Proof of Theorem~\ref{thmain}]
According to Lemma~\ref{lemcont}, $\sgeps$ is a contraction evolution system. Hence, applying Lemma~\ref{lemdoma}~\ref{itemw} with $\opl_n = \sgeps(t_n, 0)$ for $n\in\N$, it is sufficient to show~\eqref{eq_obs_conv} for all $\psi\in\cup_{\tau\geq0}(\ker\gram_\infty(0, \tau))^\perp$ and all $\eps_0\in\doma$ since $\doma$ is dense is $\XX$.
Let $\eps_0\in\doma$ and set $\eps(t) = \sgeps(t, 0)\eps_0$ for all $t\geq0$.
Since $\sgeps$ is a contraction, $\norm{\eps}$ is non-increasing and whence converges to a finite limit.
Equation~\eqref{epsnoninc} yields for all $t_0, \tau\geq 0$,
\begin{equation}\label{eqintC}
    \int_{t_0}^{t_0+\tau} \normY{C\eps(t)}^2\diff t
    \leq \frac{1}{2(r-\mu)}\left(\norm{\eps(t_0)}^2-\norm{\eps(t_0+\tau)}^2\right).
\end{equation}
Hence,
\begin{equation}
    \int_{t_0}^{t_0+\tau} \normY{C\eps(t)}^2\diff t
    \cvl{t_0\cv+\infty}0.
\end{equation}
According to the Duhamel's formula, for all $t\geq t_0\geq 0$,
\begin{align}\label{duhamel}
\epsilon(t)
= \sg(t, t_0)\epsilon(t_0) 
- r\int_{t_0}^{t} \sg(t, s) \opc^*\opc \epsilon(s) \dd s.
\end{align}
Then
\begin{align*}
\gram(t_0, \tau)\epsilon(t_0)
&=\int_{t_0}^{t_0+\tau}\evol(t, t_0)^*\opc^*\opc\evol(t, t_0)\epsilon(t_0)\dd t\\
&=\int_{t_0}^{t_0+\tau}\evol(t, t_0)^*\opc^*\opc\epsilon(t)\dd t\\
&\qquad+ r \int_{t_0}^{t_0+\tau}\evol(t, t_0)^*\opc^*\opc
\int_{t_0}^{t} \sg(t, s) \opc^*\opc \eps(s) \dd s \dd t.
\end{align*}
By \eqref{E:bound-M-omega} and because $\opc$ is bounded, we have
\begin{align*}
\norm{\gram(t_0, \tau)\epsilon(t_0)}
&\leq
M e^{\omega\tau}
\|\opc\|_{\linxy}
\int_{t_0}^{t_0+\tau} \normY{C\eps(t)}\diff t
\\
&\qquad
+ r \tau M^2 e^{2\omega\tau}
\|\opc\|_{\linxy}^3
\int_{t_0}^{t_0+\tau} \normY{C\eps(t)}\diff t.
\end{align*}
Hence
\begin{equation}\label{gram000}
\gram(t_0, \tau)\epsilon(t_0) \cvl{t_0\cv+\infty}0,\qquad \forall \tau\geq0.
\end{equation}

Now, let $(t_n)_{n\geq0}$ and $(\sg_\infty(t, s))_{0\leq s\leq t}$ be as in the hypotheses of Theorem~\ref{thmain}.
Let $\Omega$ be the set
of limit points of $(\epsilon(t_n))_{n\geq0}$ for the weak topology of $\XX$, that is, the set of points $\xi\in\XX$ such that there exists a subsequence $(n_k)_{k\geq0}$ such that $\eps(t_{n_k})\cvf\xi$ as $k\cv+\infty$.
Since $\epsilon$ is bounded in $\XX$ (because $\sgeps$ is a contraction), by Kakutani's theorem (see, \eg \cite[Theorem 3.17]{Brezis}), the set $\{\epsilon(t_n), n\in\N\}$ is relatively weakly compact in $\XX$. Hence $\Omega$ is not empty.
Let $\xi\in\Omega$ and $(\epsilon(t_{n_k}))_{k\geq0}$ be a subsequence converging weakly to $\xi$.
Then, according to \eqref{gram000} and Lemma~\ref{lemconv},
\begin{align*}
    \norm{\gram_\infty(0, \tau)\eps(t_{n_k})}
    &\leq
    \norm{\gram(t_{n_k}, \tau)\eps(t_{n_k})}\\
    &\qquad+
    \|\gram_\infty(0, \tau)-\gram(t_{n_k}, \tau)\|_{\linx}\norm{\eps_0}\\
    &\cvl{k\cv+\infty}0.
\end{align*}
Hence $\xi\in\ker\gram_\infty(0, \tau)$.
Thus $\Omega\subset\ker\gram_\infty(0, \tau)$.
Let $\psi\in X$.
By definition of $\Omega$, and since $\eps$ is bounded,
for all $\eta>0$,
there exists $N\in\N$ such that for all $n\geq N$,
there exists $\xi_n\in\Omega$ such that
\begin{align*}
    |\psX{\eps(t_n)-\xi_n}{\psi}| \leq \eta.
\end{align*}
Then, if $\psi\in (\ker\gram_\infty(0, \tau))^\perp$, $\psX{\xi_n}{\psi}=0$ which yields
\begin{align*}
    \abs{\psX{\eps(t_n)}{\psi}}
    \leq \abs{\psX{\eps(t_n)-\xi_n}{\psi}} + \abs{\psX{\xi_n}{\psi}}
    \leq \eta.
\end{align*}
Since this result holds for all $\tau\geq0$,
\begin{align*}
    \psX{\eps(t_n)}{\psi} \cvfl{n\cv+\infty} 0,\qquad\forall\psi\in\bigcup_{\tau\geq0}(\ker\gram_\infty(0, \tau))^\perp.
\end{align*}
This conclude the proof of the first part of Theorem~\ref{thmain}.\\

Now, assume that $((t_{n+1}-t_n))_{n\ge 0}$ is bounded and $\Obsspace = \XX$.
It is sufficient to prove that for all increasing positive sequence $(\tau_k)_{k\geq0}\cv+\infty$, $\eps(\tau_k)\cvf0$ as $k\cv+\infty$.
For all $k\in\N$, let $n_k\in\N$ be such that $t_{n_k}\leq \tau_k< t_{n_k+1}$. Then $s_k = \tau_k - t_{n_k}$ is a non-negative bounded sequence.
Hence, up to an extraction of $(t_n)_{n\geq0}$,
it is now sufficient to prove that $\eps(t_n+s_n)\cvf0$ as $n\cv+\infty$ for all non-negative bounded sequence $(s_n)_{n\geq0}$. Set $\bar{s} = \sup_{n\in\N}s_n$.
For all $\psi\in\XX$,
\begin{align*}
    \abs{\psX{\eps(t_n+s_n)}{\psi}}
    &\leq \abs{\psX{\sg_\infty(s_n, 0)\eps(t_n)}{\psi}}\\
    &\quad +  \|(\sg(t_n+s_n, t_n) - \sg_\infty(s_n, 0))\|_{\linx}\norm{\eps_0}\norm{\psi}\\
    &\qquad + \norm{\eps(t_n+s_n) - \sg(t_n+s_n, t_n)\eps(t_n)}\norm{\psi}.
\end{align*}
By \eqref{E:conv-T-to-Tinfinity}, and because $(s_n)_{n\geq0}$ is bounded, it follows that
\begin{align*}
    \|(\sg(t_n+s_n, t_n) - \sg_\infty(s_n, 0))\|_{\linx}\cvl{n\cv+\infty}0.
\end{align*}
Using \eqref{E:bound-M-omega}, \eqref{duhamel} and the Cauchy-Schwarz inequality
\begin{align*}
    \norm{\eps(t_n+s_n) - \sg(t_n+s_n, t_n)\eps(t_n)}
    &\leq r M e^{\omega \bar{s}} \|C\|_{\linxy} \int_{t_n}^{t_n+\bar{s}}\normY{C\eps(t)}\dd t\\
    &\cvl{n\cv+\infty}0.
\end{align*}
Hence, it remains to prove that $\sg_\infty(s_n, 0)\eps(t_n)\cvf0$ as $n\cv+\infty$.
For all $t\geq0$,
\eqref{E:bound-M-omega} and \eqref{E:conv-T-to-Tinfinity}
yield
$\|\sg_\infty(t, 0)\|_{\linx} \leq Me^{\omega t}$, and thus for $\psi\in\XX$,
\begin{align*}
    \abs{\psX{\sg_\infty(s_n, 0)\eps(t_n)}{\psi}}
    \leq
    Me^{\omega\bar{s}}\norm{\eps_0}\norm{\psi}.
\end{align*}
Let $\ell\in\R$ and $(n_k)_{k\geq0}$ a subsequence such that
$\abs{\psX{\sg_\infty(s_{n_k}, 0)\eps(t_{n_k})}{\psi}}\cv\ell$ as $k\cv+\infty$. We now show that $\ell = 0$ to end the proof.
Since $(s_n)_{n\geq0}$ is bounded
and $s\mapsto\sg_\infty(s, 0)^*\psi$ is continuous in the strong topology of $\XX$,
$(\sg_\infty(s_{n_k}, 0)^*\psi)_{k\geq0}$ converges strongly up to a new extraction of $(s_{n_k})_{k\geq0}$ to some $\xi\in\XX$. Then, for all $k\in\N$,
\begin{align*}
    \abs{\psX{\sg_\infty(s_{n_k}, 0)\eps(t_{n_k})}{\psi}}
    &= \abs{\psX{\eps(t_{n_k})}{\sg_\infty(s_{n_k}, 0)^*\psi}}\\
    &\leq
    \abs{\psX{\eps(t_{n_k})}{\xi}}
    + \norm{\sg_\infty(s_{n_k}, 0)^*\psi - \xi}\norm{\eps_0}\\
    &\cvl{k\cv+\infty}0.
\end{align*}
Thus $\ell = 0$.
\end{proof}

\subsection{Proof of Theorem~\ref{thmainbf}}
\label{secproofthmainbf}

Assume that $T<+\infty$ and $(\sg(t, s))_{0\leq s, t\leq T}$ is a bi-directional evolution system. We adapt the proof of Theorem~\ref{thmain} to the BFN algorithm (see Section~\ref{secproofthmain}).
The lemmas involved and steps of the proof are very similar.

\begin{lemma}\label{lemcontbf}
If both $((A(t))_{t\in[0, T]}, C)$ and $((-A(t))_{t\in[0, T]}, C)$ are $\mu$-weakly detectable and $r>\mu$, then $\sgepsi$ (resp. $\sgepsii$) is a forward (resp. backward) contraction bi-directional evolution system, that is,
\begin{equation}
\|\sgepsi(t, s)\|_{\linx} \leq1
\quad\text{and}\quad
\|\sgepsii(s, t)\|_{\linx} \leq1,
\qquad \forall t\geq s\geq 0.
\end{equation}
\end{lemma}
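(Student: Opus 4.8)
The plan is to mimic the energy estimate of Lemma~\ref{lemcont} separately for the forward evolution system $\sgepsi$ and the backward one $\sgepsii$, using the two-sided detectability hypothesis. Since $\dom$ is dense in $\XX$ and $\sgepsi(t,s)$, $\sgepsii(s,t)$ are bounded operators, it suffices to establish the contraction inequalities on initial data in $\dom$; for such data the relevant trajectories lie in $C^1$ and the differentiation below is justified.

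First I would treat the forward system. Fix $0\le s\le t\le T$ and $\eps_0\in\dom$, and set $\eps(\sigma)=\sgepsi(\sigma,s)\eps_0$ for $\sigma\in[s,t]$, which solves $\dot\eps = (A(\sigma)-r\opc^*\opc)\eps$ with $\eps(s)=\eps_0$. Exactly as in \eqref{epsnoninc}, compute
\begin{align*}
\frac12\frac{\dd}{\dd\sigma}\norm{\eps(\sigma)}^2
= \psX{\eps(\sigma)}{A(\sigma)\eps(\sigma)} - r\normY{\opc\eps(\sigma)}^2
\le -(r-\mu)\normY{\opc\eps(\sigma)}^2 \le 0,
\end{align*}
using the $\mu$-weak detectability of $((A(t))_{t\in[0,T]},C)$ and $r>\mu$. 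Hence $\sigma\mapsto\norm{\eps(\sigma)}^2$ is non-increasing on $[s,t]$, giving $\norm{\sgepsi(t,s)\eps_0}\le\norm{\eps_0}$, and therefore $\|\sgepsi(t,s)\|_{\linx}\le1$.

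For the backward system, fix $0\le s\le t\le T$ and $\eps_1\in\dom$, and set $\eta(\sigma)=\sgepsii(\sigma,t)\eps_1$ for $\sigma\in[s,t]$, which solves $\dot\eta = (A(\sigma)+r\opc^*\opc)\eta$ with $\eta(t)=\eps_1$. The key point is that running this equation backward in time is the same as running a forward energy estimate with the sign of the drift reversed: for $\sigma\in[s,t]$,
\begin{align*}
\frac12\frac{\dd}{\dd\sigma}\norm{\eta(\sigma)}^2
= \psX{\eta(\sigma)}{A(\sigma)\eta(\sigma)} + r\normY{\opc\eta(\sigma)}^2
\ge -\mu\normY{\opc\eta(\sigma)}^2 + r\normY{\opc\eta(\sigma)}^2
= (r-\mu)\normY{\opc\eta(\sigma)}^2\ge0,
\end{align*}
where now the lower bound $\psX{A(\sigma)\eta}{\eta}\ge-\mu\normY{\opc\eta}^2$ comes from the $\mu$-weak detectability of $((-A(t))_{t\in[0,T]},C)$, i.e. from $\psX{-A(\sigma)\eta}{\eta}\le\mu\normY{\opc\eta}^2$. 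Thus $\sigma\mapsto\norm{\eta(\sigma)}^2$ is non-decreasing on $[s,t]$, so $\norm{\eta(s)}\le\norm{\eta(t)}=\norm{\eps_1}$, that is $\norm{\sgepsii(s,t)\eps_1}\le\norm{\eps_1}$, hence $\|\sgepsii(s,t)\|_{\linx}\le1$. Extending both inequalities from $\dom$ to all of $\XX$ by density finishes the proof.

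The computation is routine; the only place one must be careful is the bookkeeping of signs and of which of the two detectability hypotheses is invoked for which estimate — the forward contraction of $\sgepsi$ uses detectability of $A$, while the "backward contraction" of $\sgepsii$ (decay as time decreases) uses detectability of $-A$, together with the fact that the backward observer \eqref{eps2b} carries a $+r\opc^*\opc$ term rather than $-r\opc^*\opc$. I expect no genuine obstacle beyond this.
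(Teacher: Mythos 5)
Your proposal is correct and follows essentially the same energy-estimate argument as the paper: reduce to $\dom$ by density, differentiate $\norm{\cdot}^2$ along trajectories, and invoke the detectability of $A$ (resp.\ $-A$) for the forward (resp.\ backward) system. The only cosmetic difference is that you integrate the backward system directly from the final datum to get $\|\sgepsii(s,t)\|_{\linx}\le1$, whereas the paper shows $\norm{\sgepsii(t,t_0)\eps_0}\ge\norm{\eps_0}$ for increasing time and deduces the backward contraction from the invertibility built into the bi-directional evolution system.
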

\begin{proof}
Since $\doma$ is dense in $\XX$, it is sufficient to show that
\begin{equation}
    \norm{\sgepsi(t, t_0) \eps_0} \leq \norm{\eps_0}
    \quad\text{and}\quad
    \norm{\sgepsii(t, t_0) \eps_0} \geq \norm{\eps_0}
\end{equation}
for all $\eps_0\in\doma$ and all $t\geq t_0\geq0$.
Let $t_0\geq0$, $\eps_0\in\doma$ and set $\eps_+(t) = \sgepsi(t, t_0) \eps_0$ and
$\eps_-(t) = \sgepsii(t, t_0) \eps_0$
for all $t\geq t_0$. Then $\eps^i\in C^1([0, +\infty), X)$ for $i\in\{0, 1\}$ and for all $t\geq t_0$,
\begin{align}
\frac12\frac{\dd}{\dd t}\norm{\eps_+(t)}^2
&= \psX{\eps_+(t)}{\dot{\eps}_+(t)}\nonumber\\
&= \psX{\eps_+(t)}{\opa(t)\eps_+(t)} - r \psX{\eps_+(t)}{\opc^*\opc\eps_+(t)}\nonumber\\
&\leq -(r-\mu) 
\normY{C\eps_+(t)}^2
\quad
\text{(since $((A(t))_{t\geq0}, C)$ is $\mu$-weakly detectable)}
\label{epsnonincbf}
\\
&\leq0\nonumber
\end{align}
and
\begin{align}
\frac12\frac{\dd}{\dd t}\norm{\eps_-(t)}^2
&= \psX{\eps_-(t)}{\dot{\eps}_-(t)}\nonumber\\
&= \psX{\eps_-(t)}{\opa(t)\eps_-(t)} + r \psX{\eps_-(t)}{\opc^*\opc\eps_-(t)}\nonumber\\
&\geq (r-\mu) 
\normY{C\eps_-(t)}^2
\quad
\text{(since $((-A(t))_{t\geq0}, C)$ is $\mu$-weakly detectable)}
\label{epsnonincbfii}
\\
&\geq0\nonumber
\end{align}
since $r>\mu$. Hence $[t_0, +\infty)\ni t \mapsto \norm{\eps_+(t)}^2$ is non-increasing and
$[t_0, +\infty)\ni t \mapsto \norm{\eps_-(t)}^2$ is non-decreasing, which yields \eqref{eqcont} since $\eps_+(t_0) = \eps_-(t_0) = \eps_0$.
\end{proof}

\begin{proof}[Proof of Theorem~\ref{thmainbf}]
According to Lemma~\ref{lemcontbf}, $\sgepsi$ (resp. $\sgepsii$) is a forward (resp. backward) contraction bi-directional evolution system.
Let $\opl = \sgepsii(0, T)\sgepsi(T, 0)\in\linx$. Then $\opl^n$ is a contraction for all $n\in\N$.
Hence,
applying Lemma~\ref{lemdoma}~\ref{itemw}, it is sufficient to show
that
$\psX{\opl^n\eps_0}{\psi}\cv0$ as $n\cv+\infty$
for all $\psi\in\cup_{\tau\geq0}(\ker\gram(0, T))^\perp$ and all $\eps_0\in\doma$ since $\doma$ is dense is $\XX$.
Let $\eps_0\in\doma$ and set $\eps^{2n}(t) = \sgepsi(t, 0)\opl^n\eps_0$ for all $t\geq0$ and all $n\in\N$.
Since $\opl$ is a contraction, $\norm{\eps^{2n}(0)}$ is non-increasing
and thus has a finite limit as $n$ goes to infinity.
Moreover, 
\begin{align*}
    \norm{\eps^{2n}(T)}
    = \norm{\sgepsi(T, 0)\opl^n\eps_0}
    &= \norm{\sgepsii(T, 0)\opl^{n+1}\eps_0}\\
    &=\norm{\sgepsii(T, 0)\eps^{2(n+1)}(0)}
    \geq \norm{\eps^{2(n+1)}(0)}.
\end{align*}
Then~\eqref{epsnonincbf} yields for all $n\in\N$
\begin{align*}
    \int_{0}^{T} \normY{C\eps^{2n}(t)}^2\diff t
    &\leq \frac{1}{2(r-\mu)}\left(\norm{\eps^{2n}(0)}^2-\norm{\eps^{2n}(T)}^2\right)\\
    &\leq \frac{1}{2(r-\mu)}\left(\norm{\eps^{2n}(0)}^2-\norm{\eps^{2(n+1)}(0)}^2\right).
\end{align*}
Hence,
\begin{equation}\label{conv-int-C-eps2}
    \int_{0}^{T} \normY{C\eps^{2n}(t)}^2\diff t
    \cvl{n\cv+\infty}0.
\end{equation}
According to the Duhamel's formula, for all $n\in\N$,
\begin{align}\label{duhamel-bf}
\epsilon^{2n}(t)
= \sg(t, 0)\epsilon^{2n}(0) 
- r\int_{0}^{t} \sg(t, s) \opc^*\opc \epsilon^{2n}(s) \dd s.
\end{align}
Then
\begin{align*}
\gram(0, T)\epsilon^{2n}(0)
&=\int_{0}^{T}\evol(t, 0)^*\opc^*\opc\evol(t, 0)\epsilon^{2n}(0)\dd t\\
&=\int_{0}^{T}\evol(t, 0)^*\opc^*\opc\epsilon^{2n}(t)\dd t\\
&\qquad+ r \int_{0}^{T}\evol(t, 0)^*\opc^*\opc
\int_{0}^{t} \sg(t, s) \opc^*\opc \eps^{2n}(s) \dd s \dd t.
\end{align*}
According to \eqref{E:bound-M-omega} and because $\opc$ is bounded,
\begin{align*}
\norm{\gram(0, T)\epsilon^{2n}(0)}
&\leq
M e^{\omega T}
\|\opc\|_{\linxy}
\int_{0}^{T} \normY{C\eps^{2n}(t)}\diff t
\\
&\quad
+ r T M^2 e^{2\omega T}
\|\opc\|_{\linxy}^3
\int_{0}^{T} \normY{C\eps^{2n}(t)}\diff t.
\end{align*}
Hence $\gram(0, T)\epsilon^{2n}(0) \cv0$ as $n\cv+\infty$.\\

Now,
let $\Omega$ be the set
of limit points of $(\epsilon^{2n}(0))_{n\geq0}$ for the weak topology of $\XX$, that is, the set of points $\xi\in\XX$ such that there exists a subsequence $(n_k)_{k\geq0}$ such that $\epsilon^{2n_k}(0)\cvf\xi$ as $k\cv+\infty$.
Since $(\epsilon^{2n}(0))_{n\geq0}$ is bounded in $\XX$ (because $\opl$ is a contraction), by Kakutani's theorem (see, \eg \cite[Theorem 3.17]{Brezis}), the set $\{\epsilon^{2n}(0), n\in\N\}$ is relatively weakly compact in $\XX$. Hence $\Omega$ is not empty.
Let $\xi\in\Omega$ and $(\epsilon^{2n_k}(0))_{k\geq0}$ be a subsequence converging weakly to $\xi$.
Then $\gram(0, T)\xi = 0$ by uniqueness of the weak limit.
Thus $\Omega\subset\ker\gram(0, T)$.
Let $\psi\in X$.
By definition of $\Omega$, and since $(\epsilon^{2n}(0))_{n\geq0}$ is bounded,
for all $\eta>0$,
there exists $N\in\N$ such that for all $n\geq N$,
there exits $\xi_n\in\Omega$ such that
\begin{align*}
    |\psX{\eps^{2n}(0)-\xi_n}{\psi}| \leq \eta.
\end{align*}
Then, if $\psi\in (\ker\gram(0, T))^\perp$, $\psX{\xi_n}{\psi}=0$ which yields
\begin{align*}
    \abs{\psX{\eps^{2n}(0)}{\psi}}
    \leq \abs{\psX{\eps^{2n}(0)-\xi_n}{\psi}} + \abs{\psX{\xi_n}{\psi}}
    \leq \eta,
\end{align*}
\ie
\begin{align*}
    \psX{\eps^{2n}(0)}{\psi} \cvfl{n\cv+\infty} 0,\qquad\forall\psi\in\bigcup_{\tau\geq0}(\ker\gram(0, T))^\perp.
\end{align*}
This ends the proof of Theorem~\ref{thmainbf}.
\end{proof}

\subsection{Proof of Theorem~\ref{thmainstr}}
\label{secproofthmainstr}

Let us first state two important lemmas.
They imply that the dynamics of the error system (\ref{eps2}-\ref{eps2b}) may be decomposed on the two subspaces $\Obsspace_\tau$ and $\Obsspace_\tau^\perp$.
Therefore, the initial estimation of the unobservable part of the system $\proj_{\Obsspace_\tau^\perp}\etath_0$ does not affect the reconstruction of the observable part $\pobst\etat(t)$ at all.
In Statement~\ref{item1}, the hypothesis $\Obsspace_\tau = \XX$ holds, so that these two lemmas are useless.
On contrary, they are used to prove Statement~\ref{item2}.

\begin{lemma}\label{lemstable}
Assume that $T = +\infty$ and $A(t)$ is skew-adjoint for all $t\in\R_+$.
Let $\Obsspace_\tau$ be
the observable subspace at time $\tau$ of the pair $(\sg, \opc)$.
Set $\opl = \sgeps(\tau, 0)^*\sgeps(\tau, 0)$.
Then $\opl \Obsspace_\tau \subset \Obsspace_\tau$
and  $\opl \Obsspace_\tau^\perp \subset \Obsspace_\tau^\perp$.
\end{lemma}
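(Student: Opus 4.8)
The plan is to factor the operator $\opl$ through a ``reduced propagator''. Because each $A(t)$ is skew-adjoint, for $x\in\doma$ we have $\frac12\frac{\dd}{\dd t}\|\sg(t,0)x\|^2=\psX{A(t)\sg(t,0)x}{\sg(t,0)x}=0$, so $\sg(t,0)$ is isometric on $\doma$ and hence on $\XX$; it is moreover surjective since $(-A(\cdot))$ is again a stable family of skew-adjoint generators and generates the inverse evolution, hence $\sg(t,0)$ is unitary and $\sg(0,t):=\sg(t,0)^{-1}=\sg(t,0)^*$. Put $K(t)=\sg(0,t)\sgeps(t,0)\in\linx$ for $t\in[0,\tau]$, so that $\sgeps(t,0)=\sg(t,0)K(t)$ and
\[
\opl=\sgeps(\tau,0)^*\sgeps(\tau,0)=K(\tau)^*\,\sg(\tau,0)^*\sg(\tau,0)\,K(\tau)=K(\tau)^*K(\tau).
\]
Since $\opl$ is self-adjoint, the inclusion $\opl\,\Obsspace_\tau^\perp\subset\Obsspace_\tau^\perp$ will imply $\opl\,\Obsspace_\tau\subset\Obsspace_\tau$, so it is enough to prove the former.

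Next I would determine the dynamics of $K$. Write $Q(t)=\sg(t,0)^*\opc^*\opc\sg(t,0)=(\opc\sg(t,0))^*(\opc\sg(t,0))$ for the (self-adjoint, non-negative) integrand of $\gram$, so that $\gram(0,\tau)=\int_0^\tau Q(t)\,\dd t$. Differentiating $K(t)$ on $\doma$ with $\partial_t\sg(0,t)x=-\sg(0,t)A(t)x$ and $\partial_t\sgeps(t,0)x=(A(t)-r\opc^*\opc)\sgeps(t,0)x$, the $A(t)$-contributions cancel, and using $\sgeps(t,0)=\sg(t,0)K(t)$ and $\sg(0,t)=\sg(t,0)^*$ one obtains
\[
\dot K(t)=-r\,\sg(0,t)\opc^*\opc\,\sgeps(t,0)=-r\,Q(t)\,K(t),\qquad K(0)=\Id_\XX ,
\]
i.e.\ $K(t)=\Id_\XX-r\int_0^t Q(s)K(s)\,\dd s$ in $\linx$ on $[0,\tau]$. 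The decisive structural observation is that each $Q(t)$ respects the splitting $\XX=\Obsspace_\tau\oplus\Obsspace_\tau^\perp$: since $t\mapsto\psX{Q(t)x}{x}\ge 0$ is continuous, $\psX{\gram(0,\tau)x}{x}=\int_0^\tau\psX{Q(t)x}{x}\,\dd t=0$ forces $Q(t)x=0$ for every $t\in[0,\tau]$, whence $\ker\gram(0,\tau)=\bigcap_{t\in[0,\tau]}\ker Q(t)$ and, by orthogonality, $\Obsspace_\tau$ is the closed linear span of $\bigcup_{t\in[0,\tau]}\ran Q(t)$. Therefore, for all $t\in[0,\tau]$, $Q(t)\big|_{\Obsspace_\tau^\perp}=0$ and $\ran Q(t)\subset\Obsspace_\tau$.

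Finally I would extract the invariance properties of $K(\tau)$ from the integral equation. If $x\in\Obsspace_\tau^\perp$ then $Q(s)x=0$ for all $s\in[0,\tau]$, so the constant path $s\mapsto x$ solves the Volterra equation and, by uniqueness, $K(t)x=x$ for all $t\in[0,\tau]$; in particular $K(\tau)\,\Obsspace_\tau^\perp=\Obsspace_\tau^\perp$. If $x\in\Obsspace_\tau$ then, for every $s$, $Q(s)K(s)x\in\ran Q(s)\subset\Obsspace_\tau$ no matter where $K(s)x$ lies, so $\int_0^tQ(s)K(s)x\,\dd s\in\Obsspace_\tau$ (a closed subspace), and hence $K(t)x\in\Obsspace_\tau$; thus $K(\tau)\,\Obsspace_\tau\subset\Obsspace_\tau$, which in turn gives $K(\tau)^*\,\Obsspace_\tau^\perp\subset\Obsspace_\tau^\perp$. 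Combining, $\opl\,\Obsspace_\tau^\perp=K(\tau)^*K(\tau)\,\Obsspace_\tau^\perp=K(\tau)^*\,\Obsspace_\tau^\perp\subset\Obsspace_\tau^\perp$, and the lemma follows from the first paragraph.

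I expect the delicate point to be the identity $\dot K=-rQK$ — specifically, justifying that $\sg(0,t)$ is a bona fide backward evolution on the common domain $\doma$ with $\partial_t\sg(0,t)x=-\sg(0,t)A(t)x$, which is exactly where the skew-adjointness of $A(t)$ (unitarity of $\sg$, via Stone's theorem and the stability hypothesis) is used; the remaining steps are routine, modulo the simple but essential fact that $\Obsspace_\tau$ is the closed span of the ranges of the Gramian integrands $Q(t)$.
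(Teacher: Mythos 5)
Your proof is correct, and it reaches the same two pillars as the paper's argument --- unitarity of $\sg$ (from skew-adjointness) and the Duhamel formula --- but it organizes them differently. The paper works directly with $\opl$: for $\eps_0\in\doma\cap\Obsspace_\tau$ and $\psi_0\in\doma\cap\Obsspace_\tau^\perp$ it expands $\psX{\opl\eps_0}{\psi_0}$ via Duhamel, uses the observation that a trajectory starting in $\ker\gram(0,\tau)$ has zero output and therefore evolves freely ($\sgeps(t,0)\psi_0=\sg(t,0)\psi_0$, which is exactly your $K(t)\psi_0=\psi_0$), and concludes by unitarity that the inner product vanishes; self-adjointness of $\opl$ then transfers the invariance to the other subspace, as in your first paragraph. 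You instead factor $\sgeps(t,0)=\sg(t,0)K(t)$ so that $\opl=K(\tau)^*K(\tau)$, and prove the stronger statement that $K(\tau)$ itself preserves the splitting, via the Volterra equation $K(t)=\Id_\XX-r\int_0^tQ(s)K(s)\,\dd s$ and the identification $\Obsspace_\tau=\overline{\spanned}\bigcup_{t\in[0,\tau]}\ran Q(t)$ (which the paper uses only implicitly). This buys a cleaner structural picture and avoids the density-in-$\doma$ step, at the cost of the differentiation $\dot K=-rQK$ that you rightly flag as the delicate point; note, however, that you can sidestep it entirely by left-multiplying the paper's integral Duhamel identity \eqref{duhamel} by $\sg(0,t)$, which yields your Volterra equation for $K$ on all of $\XX$ without any chain rule on evolution systems (and without invoking Stone's theorem, which in this time-varying setting should be replaced by the Daleckii--Krein lemma the paper cites). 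With that substitution your argument is complete and fully rigorous.
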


\begin{proof}
According to \cite[Chapter 3, Lemma 1.1]{daleckii2002stability}, since $A(t)$ is skew-adjoint for all $t\in\R$, it is the generator of a unitary bi-directional evolution system, still denoted by $\sg$. In particular, for all $t\geq s\geq t_0\in\R$, $\sg(t, s)^*\sg(t, t_0) = \sg(s, t_0)$.

Let $\eps_0\in \doma\cap\Obsspace_\tau$. For all $\psi_0\in\doma\cap\Obsspace_\tau^\perp = \doma\cap\ker \gram(0, \tau)$, the Duhamel's formula \eqref{duhamel} yields
\begin{align}\label{eq_Lep}
    \psX{L\eps_0}{\psi_0}
    &= \psX{\sgeps(\tau, 0)\eps_0}{\sgeps(\tau, 0)\psi_0}\nonumber\\
    &= \psX{\eps_0}{\sg(\tau, 0)^*\sgeps(\tau, 0)\psi_0}
    - r\int_0^\tau \psX{C\sgeps(s, 0)\eps_0}{C\sg(\tau, s)^*\sgeps(\tau, 0)\psi_0}\dd s.
\end{align}
Since $\psi_0\in\ker \gram(0, \tau)$, $\opc\sg(t, 0)\psi_0=0$ for all $t\in[0, \tau]$.
Set $\psi(t) = \sg(t, 0)\psi_0$
and $\bar{\psi}(t) = \sgeps(t, 0)(-\psi_0)$.
Then $\psi + \bar{\psi}$ is the unique solution of \eqref{obs}
starting from $0\in\dom$ and
with $y(t) = 0$ for all $t\in[0, \tau]$.
Hence, $\psi + \bar{\psi} = 0$ on $[0, \tau]$, \ie
$
\sgeps(t, 0)\psi_0
= \sg(t, 0)\psi_0
$
for all $t\in[0, \tau]$.
Then, \eqref{eq_Lep} yields
\begin{align*}
    \psX{L\eps_0}{\psi_0}
    &= \psX{\eps_0}{\sg(\tau, 0)^*\sg(\tau, 0)\psi_0}
    - r\int_0^\tau \psX{C\sg(s, 0)\eps_0}{C\sg(\tau, s)^*\sg(\tau, 0)\psi_0}\dd s.\\
    &= \psX{\eps_0}{\psi_0}
    - r\int_0^\tau \psX{C\sg(s, 0)\eps_0}{C\sg(s, 0)\psi_0}\dd s.\\
    &=0.
\end{align*}
Thus, since $\dom$ is dense in $\XX$, $L\eps_0\in\Obsspace_\tau$ for all $\eps_0\in\Obsspace_\tau$.
Now, let $\eps_0\in \Obsspace_\tau^\perp$ and $\psi_0\in\Obsspace_\tau$.
Since $\opl$ is self-adjoint,
$\psX{\opl\eps_0}{\psi_0} = \psX{\eps_0}{\opl\psi_0} = 0$ from above.
Hence, $L\eps_0\in\Obsspace_\tau^\perp$.
\end{proof}

\begin{lemma}\label{lemstable2}
Assume that $T = +\infty$ and $A(t)$ is skew-adjoint for all $t\in\R_+$.
Let $\Obsspace_\tau$ be
the observable subspace at time $\tau$ of the pair $(\sg, \opc)$.
If $\sg(t, 0)\Obsspace_\tau \subset \Obsspace_\tau$
and $\sg(t, 0)\Obsspace_\tau^\perp \subset \Obsspace_\tau^\perp$
for all $t\in[0, \tau]$,
then $\sgeps(t, 0)\Obsspace_\tau \subset \Obsspace_\tau$
and $\sgeps(t, 0)\Obsspace_\tau^\perp \subset \Obsspace_\tau^\perp$
for all $t\in[0, \tau]$.
\end{lemma}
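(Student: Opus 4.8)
The plan is to propagate the invariance of $\Obsspace_\tau$ under $\sg$ to an invariance under $\sgeps$ by using the Duhamel formula together with the skew-adjointness of $A(t)$. The key structural fact, borrowed from the proof of Lemma~\ref{lemstable}, is that when $A(t)$ is skew-adjoint the evolution system $\sg$ is unitary, so $\sg(t,s)^*\sg(t,t_0) = \sg(s,t_0)$, and moreover whenever $\psi_0\in\ker\gram(0,\tau)$ one has $\opc\sg(t,0)\psi_0 = 0$ and $\sgeps(t,0)\psi_0 = \sg(t,0)\psi_0$ for all $t\in[0,\tau]$; that is, $\sgeps$ and $\sg$ \emph{already agree} on the unobservable subspace $\Obsspace_\tau^\perp$. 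This immediately gives $\sgeps(t,0)\Obsspace_\tau^\perp = \sg(t,0)\Obsspace_\tau^\perp \subset \Obsspace_\tau^\perp$ for all $t\in[0,\tau]$, which settles half of the claim with essentially no extra work.

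For the other half, take $\eps_0\in\doma\cap\Obsspace_\tau$, fix $t\in[0,\tau]$, and show $\sgeps(t,0)\eps_0\in\Obsspace_\tau$, \ie that $\psX{\sgeps(t,0)\eps_0}{\psi_0} = 0$ for every $\psi_0\in\doma\cap\Obsspace_\tau^\perp = \doma\cap\ker\gram(0,\tau)$. Apply the Duhamel formula \eqref{duhamel} to $\eps(\cdot) = \sgeps(\cdot,0)\eps_0$:
\begin{align*}
\psX{\sgeps(t,0)\eps_0}{\psi_0}
= \psX{\sg(t,0)\eps_0}{\psi_0}
- r\int_0^t \psX{\opc\sgeps(s,0)\eps_0}{\opc\sg(t,s)^*\psi_0}\dd s .
\end{align*}
The first term vanishes because $\sg(t,0)\eps_0\in\sg(t,0)\Obsspace_\tau\subset\Obsspace_\tau$ by hypothesis and $\psi_0\in\Obsspace_\tau^\perp$. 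For the integral term, use that $\sg$ is unitary so $\sg(t,s)^*\psi_0 = \sg(s,0)\sg(t,0)^*\psi_0$; one needs $\opc\sg(s,0)\,\sg(t,0)^*\psi_0 = 0$, which follows from $\sg(t,0)^*\psi_0 = \sg(0,t)\psi_0\in\sg(0,t)\Obsspace_\tau^\perp\subset\Obsspace_\tau^\perp = \ker\gram(0,\tau)$ (here invoking the hypothesized backward invariance $\sg(0,t)\Obsspace_\tau^\perp\subset\Obsspace_\tau^\perp$, equivalently $\sg(t,0)\Obsspace_\tau^\perp\subset\Obsspace_\tau^\perp$ read in reverse via unitarity) and the defining property $\opc\sg(s,0)\ker\gram(0,\tau) = \{0\}$ for $s\in[0,\tau]$. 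Hence the integrand is identically zero and $\psX{\sgeps(t,0)\eps_0}{\psi_0} = 0$. Density of $\doma$ in $\XX$ and boundedness of $\sgeps(t,0)$ then upgrade this to all $\eps_0\in\Obsspace_\tau$.

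Finally, the orthogonal-complement direction can alternatively be obtained, if one prefers uniformity with Lemma~\ref{lemstable}, by a self-adjointness/duality argument, but here it is cleaner to use the coincidence $\sgeps = \sg$ on $\Obsspace_\tau^\perp$ noted above. The main obstacle I anticipate is bookkeeping the adjoint identities for the non-autonomous unitary evolution system — making sure that $\sg(t,s)^* = \sg(s,t)$ and that the hypothesized forward invariance of $\Obsspace_\tau^\perp$ under $\sg(t,0)$ is correctly transported to the backward operators $\sg(t,0)^* = \sg(0,t)$ — and in confirming that the pointwise-in-$s$ cancellation $\opc\sg(s,0)\sg(t,0)^*\psi_0 = 0$ really uses only the stated hypotheses and not approximate observability; once those identities are lined up, the computation is routine.
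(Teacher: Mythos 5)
Your proof is correct, and it differs from the paper's in a way worth noting. The $\Obsspace_\tau$-invariance half is essentially the paper's argument: Duhamel, kill the first term with the hypothesis $\sg(t,0)\Obsspace_\tau\subset\Obsspace_\tau$, and kill the integrand by writing $\sg(t,s)^*\psi_0=\sg(s,0)\sg(0,t)\psi_0$ and observing that $\sg(0,t)\psi_0\in\ker\gram(0,\tau)$, so that $\opc\sg(s,0)\sg(0,t)\psi_0=0$. (One cosmetic slip: the fact $\sg(0,t)\Obsspace_\tau^\perp\subset\Obsspace_\tau^\perp$ is the unitary dual of the hypothesis $\sg(t,0)\Obsspace_\tau\subset\Obsspace_\tau$, not of $\sg(t,0)\Obsspace_\tau^\perp\subset\Obsspace_\tau^\perp$ ``read in reverse''; since both inclusions are assumed, nothing is lost.) Where you genuinely diverge is the $\Obsspace_\tau^\perp$ half: the paper runs a second Duhamel computation through the adjoint, writing $\psX{\sgeps(t,0)\eps_0}{\psi_0}=\psX{\eps_0}{\sgeps(0,t)\psi_0}$ — a step that tacitly identifies $\sgeps(t,0)^*$ with $\sgeps(0,t)$ even though $\sgeps$ is not unitary and so deserves justification — whereas you simply reuse the identity $\sgeps(t,0)\psi_0=\sg(t,0)\psi_0$ for $\psi_0\in\ker\gram(0,\tau)$, already established inside the proof of Lemma~\ref{lemstable}, to get $\sgeps(t,0)\Obsspace_\tau^\perp=\sg(t,0)\Obsspace_\tau^\perp\subset\Obsspace_\tau^\perp$ at no cost. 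That route is shorter and sidesteps the adjoint bookkeeping entirely; its only dependency is the same density step ($\doma\cap\ker\gram(0,\tau)$ dense in $\ker\gram(0,\tau)$) that the paper itself glosses over in Lemma~\ref{lemstable}.
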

\begin{proof}
As in Lemma~\ref{lemstable}, $(A(t))_{t\geq0}$ generates a unitary bi-directional evolution system $\sg$.
Hence, $\sg(t, 0)\Obsspace_\tau \subset \Obsspace_\tau$ if and only if $\sg(0, t)\Obsspace_\tau^\perp \subset \Obsspace_\tau^\perp$
and
$\sg(t, 0)\Obsspace_\tau^\perp \subset \Obsspace_\tau^\perp$ if and only if $\sg(0, t)\Obsspace_\tau \subset \Obsspace_\tau$.
Assume that all these inclusions hold.
Let $t\in\R_+$ and $\eps_0\in\Obsspace_\tau$.
For all $\psi_0\in\Obsspace_\tau^\perp$, the Duhamel's formula \eqref{duhamel} yields
\begin{align*}
    \psX{\sgeps(t, 0)\eps_0}{\psi_0}
    &= \psX{\sg(t, 0)\eps_0}{\psi_0}
    - r\int_0^t \psX{C\sgeps(s, 0)\eps_0}{C\sg(t, s)^*\psi_0}\dd s\\
    &= \psX{\sg(t, 0)\eps_0}{\psi_0}
    - r\int_0^t \psX{C\sgeps(s, 0)\eps_0}{C\sg(s, 0)\sg(0, t)\psi_0}\dd s.
\end{align*}
Since $\sg(t, 0)\Obsspace_\tau \subset \Obsspace_\tau$
and $\sg(0, t)\Obsspace_\tau^\perp \subset \Obsspace_\tau^\perp$,
we get
$\psX{\sg(t, 0)\eps_0}{\psi_0} = 0$ and $C\sg(s, 0)\sg(0, t)\psi_0 = 0$ for all $s\in[0, \tau]$, respectively.
Hence, $\sgeps(t, 0)\Obsspace_\tau \subset \Obsspace_\tau$.
Similarly, if $\eps_0\in\Obsspace_\tau^\perp$, then for all
$\psi_0\in\Obsspace_\tau$, the Duhamel's formula \eqref{duhamel} yields
\begin{align*}
    \psX{\sgeps(t, 0)\eps_0}{\psi_0}
    &= \psX{\eps_0}{\sgeps(0, t)\psi_0}\\
    &= \psX{\eps_0}{\sg(0, t)\psi_0}
    + r\int_0^t \psX{C\sg(0, s)^*\eps_0}{C\sgeps(s, 0)\psi_0}\dd s\\
    &= \psX{\eps_0}{\sg(0, t)\psi_0}
    + r\int_0^t \psX{C\sg(s, 0)\eps_0}{C\sgeps(s, 0)\psi_0}\dd s.
\end{align*}
Since $\sg(0, t)\Obsspace_\tau \subset \Obsspace_\tau$
and $\sg(t, 0)\Obsspace_\tau^\perp \subset \Obsspace_\tau^\perp$,
it holds that
$\psX{\eps_0}{\sg(0, t)\psi_0}=0$ and
$C\sg(s, 0)\eps_0 = 0$ for all $s\in[0, \tau]$, respectively.
Hence, $\sgeps(t, 0)\Obsspace_\tau^\perp \subset \Obsspace_\tau^\perp$.
\end{proof}

\begin{proof}[Proof of Theorem~\ref{thmainstr}]

Let $\tau>0$ be as in the assumptions of the theorem, and set $\opl = \sgeps(\tau, 0)^*\sgeps(\tau, 0)$.
If the hypotheses of Statement~\ref{item1} are satisfied, then
the conclusions of Lemmas~\ref{lemstable} and~\ref{lemstable2} hold (since $\Obsspace_\tau=\XX$).
Otherwise, if the hypotheses of Statement~\ref{item2} are satisfied, then 
the hypotheses and conclusions of Lemmas~\ref{lemstable} and~\ref{lemstable2} hold.
Assume for a moment that $A(t)$ is skew-adjoint for all $t\in\R_+$.
Then $((A(t))_{t\geq0}, C)$ is $0$-weakly dissipative (see Remark~\ref{remdiss}) and $(\sg(t, s))_{t, s\geq0}$ is a unitary bi-directional evolution system (see \cite[Chapter 3, Lemma 1.1]{daleckii2002stability}).
Hence, applying \cite[Chapter 5, Theorem 2.3]{Pazy} to
$(\sg(t, s))_{0\leq s\leq t\leq \tau}$ and $(\sg(\tau-t, \tau-s))_{0\leq s\leq t\leq \tau}$ perturbed with the bounded operators $-rC^*C$ and $rC^*C$ respectively,
we obtain that
$(\sgeps(t, s))_{0\leq s\leq t\leq \tau}$ and $(\sgeps(\tau-t, \tau-s))_{0\leq s\leq t\leq \tau}$ are two evolution systems.
Moreover, the condition $\sgeps(s, t)\sgeps(t, s) = \Id_\XX$ for all $t, s\in \R_+$ is also satisfied, due to the uniqueness of solutions of \eqref{eps}.
Hence, according to the characterization given below Definition~\ref{def:bi},
$(\sgeps(t, s))_{0\leq s\leq t\leq \tau}$ is actually a bi-directional evolution system, that can be naturally extended on $\R_+$.

Thus, both Statements \ref{item1} and \ref{item2} are implied by the following:
\begin{it}
\begin{enumerate}
    \item[(iii)]\label{item3}
    Suppose that $((A(t))_{t\geq0}, C)$ is $\mu$-weakly detectable, $\sgeps(\tau, 0)$ is bounded from below and normal.
    Assume that the conclusions of Lemmas ~\ref{lemstable} and~\ref{lemstable2} are satisfied.
    Then \eqref{obs} is a strong asymptotic $\Obsspace_\tau$-observer of~\eqref{syst} for all $r>\mu$.
\end{enumerate}
\end{it}
Suppose that the assumptions of \emph{(iii)} hold.
We aim to show that $\pobst\sgeps(t, 0)\eps_0\cv0$ as $t\cv+\infty$ for all $\eps_0\in\XX$.
The floor function is denoted by $\lfloor\cdot\rfloor$.
For all $t\in\R_+$ and all $\eps_0\in\XX$,
\begin{align}
    \norm{\pobst\sgeps\left(t, 0\right)\eps_0}
    &= \norm{\pobst\sgeps\left(t, \left\lfloor\frac{t}{\tau}\right\rfloor\tau\right)\sgeps\left(\left\lfloor\frac{t}{\tau}\right\rfloor\tau, 0\right)\eps_0}\nonumber\\
    &= \norm{\pobst\sgeps\left(t-\left\lfloor\frac{t}{\tau}\right\rfloor\tau, 0\right)\sgeps\left(\tau, 0\right)^{\lfloor \frac{t}{\tau} 
    \rfloor}\eps_0}
    \tag{since $t\mapsto A\left(t\right)$ is $\tau$-periodic}\\
    &= \norm{\sgeps\left(t-\left\lfloor\frac{t}{\tau}\right\rfloor\tau, 0\right)\sgeps\left(\tau, 0\right)^{\lfloor \frac{t}{\tau}
    \rfloor}\pobst\eps_0}
    \tag{by the conclusion of Lemma~\ref{lemstable2}}\\
    &\leq \norm{\sgeps\left(t-\left\lfloor\frac{t}{\tau}\right\rfloor\tau, 0\right)}\norm{\sgeps\left(\tau, 0\right)^{\lfloor \frac{t}{\tau}
    \rfloor}\pobst\eps_0}\nonumber\\
    &\leq \norm{\sgeps\left(\tau, 0\right)^{\lfloor \frac{t}{\tau}
    \rfloor}\pobst\eps_0}.
    \tag{according to Lemma~\ref{lemcont}}
\end{align}
Moreover, for all $n\in\N$,
$\psX{\opl^n\eps_0}{\eps_0}
= \norm{\sgeps(\tau, 0)^n\eps_0}^2$
since $\sgeps(\tau, 0)$ is normal.
Thus, applying Lemma~\ref{lemdoma}~\ref{itemstr}, it remains to prove that for all $\eps_0\in\doma\cap\Obsspace_\tau$, $L^n\eps_0\cv0$ as $n\cv+\infty$ since $\doma$ is dense in $\XX$ and $L^n$ is a contraction for all $n\in\N$.

The proof is an adaptation of the strategy developed in \cite[Theorem 1.1.2]{haine2014recovering}. First, we investigate the properties of $\opl$.
It is self-adjoint positive-definite since $\sgeps(\tau, 0)$ is bounded from below.
Let $\eps_0\in\doma\cap\Obsspace_\tau$.
The hypotheses of Lemma~\ref{lemcont} hold. Hence, $\sgeps$ is a contraction evolution system, and \eqref{epsnoninc} yields
\begin{align}\label{psL}
    \psX{\opl \eps_0}{\eps_0}
    = \norm{\sgeps(\tau, 0)\eps_0}^2
    \leq \norm{\eps_0}^2
    - 2(r-\mu)\int_0^\tau\normY{C\sgeps(t, 0)\eps_0}^2\dd t.
\end{align}
Denote by $L^{\frac{1}{2}}$ the square root of $L$.
Then 
\begin{align*}
    \norm{L\eps_0}^2
    &= \psX{LL^{\frac{1}{2}}\eps_0}{L^{\frac{1}{2}}\eps_0}\\
    &\leq \norm{L^{\frac{1}{2}}\eps_0}^2
    - 2(r-\mu)\int_0^\tau\normY{C\sgeps(t, 0)L^{\frac{1}{2}}\eps_0}^2\dd t\\
    &\leq \psX{\opl \eps_0}{\eps_0}\\
    &\leq \norm{\eps_0}^2
    - 2(r-\mu)\int_0^\tau\normY{C\sgeps(t, 0)\eps_0}^2\dd t.
\end{align*}
If $\norm{L\eps_0}=\norm{\eps_0}$,
then $C\sgeps(t, 0)\eps_0 = 0$
for all $t\in[0, \tau]$.
Hence, according to the Duhamel's formula \eqref{duhamel},
$\sgeps(t, 0)\eps_0= \sg(t, 0)\eps_0$ for all $t\in[0, \tau]$.
Then $\gram(0, \tau)\eps_0=0$,
\ie$\eps_0\in\Obsspace_\tau\cap\Obsspace_\tau^\perp=\{0\}$.

Thus, $\norm{L\eps_0}<\norm{\eps_0}$ if $\eps_0\neq 0$.
Moreover, \eqref{epsnoninc} yields for all $\eps_0\in\XX$ and all $n\in\N$
\begin{align*}
    \psX{\opl^{n+1} \eps_0}{\eps_0}
    - \psX{\opl^{n} \eps_0}{\eps_0}
    &= \norm{\sgeps((n+1)\tau, 0)\eps_0}^2
    - \norm{\sgeps(n\tau, 0)\eps_0}^2\\
    &\leq
    - 2(r-\mu)\int_{n\tau}^{(n+1)\tau}\normY{C\sgeps(t, 0)\eps_0}^2\dd t\\
    &\leq 0.
\end{align*}
Then $(L^n)_{n\geq0}$ is a non-increasing sequence of bounded self-adjoint positive-definite operators on the vector space $\Obsspace_\tau$ (by the invariance property).
Hence, according to \cite[Lemma 12.3.2]{TW}, there exists a bounded self-adjoint positive-definite operator $L_\infty\in\lin(\Obsspace_\tau)$ such that $L_\infty\leq L^n$ for all $n\in\N$ and
$L^n\eps_0\cv L_\infty\eps_0$ as $n\cv+\infty$ for all $\eps_0\in\Obsspace_\tau$. It remains to prove that $L_\infty=0$.

For all $x_1, x_2\in\Obsspace_\tau$ and all $n\in\N$,
\begin{align*}
    \psX{L_\infty x_1}{L_\infty x_2}
    &= \psX{L_\infty x_1}{(L_\infty-L^n)x_2}+\psX{(L_\infty-L^n) x_1}{L^n x_2}\\
    &\quad+\psX{L^n x_1}{L^n x_2}.
\end{align*}
Since $L$ is self-adjoint,
\begin{align*}
    \psX{L^n x_1}{L^n x_2}
    = \psX{L^{2n} x_1}{x_2}
    \cvl{n\cv+\infty}\psX{L_\infty x_1}{x_2}.
\end{align*}
Hence $L^2_\infty = L_\infty$.
Moreover, for all $\eps_0\in\Obsspace_\tau\setminus\{0\}$,
\begin{align*}
    \norm{L_\infty\eps_0}^2
    = \psX{L_\infty^2\eps_0}{\eps_0}
    =\psX{L_\infty\eps_0}{\eps_0}
    \leq \psX{L^2\eps_0}{\eps_0}
    = \norm{L\eps_0}^2
    < \norm{\eps_0}^2.
\end{align*}
Hence
$\norm{L_\infty\eps_0}^2
    = \norm{L_\infty^2\eps_0}^2
    <  \norm{L_\infty\eps_0}^2
$ if $L_\infty\eps_0\neq0$.
Thus $L_\infty\eps_0 = 0$ for all  $\eps_0\in\Obsspace_\tau$, which ends the proof.
\end{proof}

\subsection{Proof of Theorem~\ref{thmainbfstr}}
\label{secproofthmainbfstr}
Statement~\ref{itemGH} is a recall of the previous work of \cite{haine2014recovering}. We adapt the method to prove Statement~\ref{notskew}.

\begin{proof}[Proof of Theorem~\ref{thmainbfstr}~\ref{notskew}]
Assume that $T < +\infty$
and $(\sg(t, s))_{0\leq s, t\leq T}$ is a bi-directional evolution system.
Suppose that $((A(t))_{t\in[0, T]}, C)$ and $((-A(t))_{t\in[0, T]}, C)$ are $\mu$-weakly detectable and $r>\mu$.
Assume also that $\Obsspace_T=\XX$ and $\sgepsii(0, T)=\sgepsi(T, 0)^*$.
We follow the same strategy as in the proof of Theorem~\ref{thmainstr} (see Section~\ref{secproofthmainstr}).

Let $\opl = \sgepsii(0, T)\sgepsi(T, 0) = \sgepsi(T, 0)^*\sgepsi(T, 0)$ (as in the proof of Theorem~\ref{thmainbf}, Section~\ref{secproofthmainbf}).
Then, it is sufficient to prove that for all $\eps_0\in\Obsspace_\tau$, $L^n\eps_0\cv$0 as $n\cv+\infty$.
The operator $L$ is self-adjoint positive-definite since 
$\sgepsi(\tau, 0)$
is bounded from below (since $\sgepsi$
is bi-directional).
Let $\eps_0\in\XX$.
The hypotheses of Lemma~\ref{lemcontbf} hold. Hence, $L$ is a contraction and \eqref{epsnonincbf} yields
\begin{align}
    \psX{L\eps_0}{\eps_0}
    = \norm{\sgepsi(T, 0)\eps_0}^2
    \leq \norm{\eps_0}^2
    - 2(r-\mu)\int_0^\tau\normY{C\sgepsi(t, 0)\eps_0}^2\dd t.
\end{align}
From there, the proof is identical to the proof of Theorem~\ref{thmainstr}, from equation \eqref{psL} to the end, by replacing $\tau$ by $T$, $\sgeps$ by $\sgepsi$ and $\Obsspace_\tau$ by $\XX$.
Hence, $L^n\eps_0\cv0$ as $n\cv\infty$, which ends the proof of Theorem~\ref{thmainbfstr}.
\end{proof}

\section{Examples and applications}\label{secex}

We provide two examples of applications of the main results of Section~\ref{secmain}. First, we consider the theoretical example of the one-dimensional time-varying transport equation with periodic boundary conditions. Then,
we apply the obtained results to a model of a batch crystallization process in order to reconstruct the Crystal Size Distribution (CSD) from the Chord Length Distribution (CLD).

\subsection{One-dimensional time-varying transport equation with periodic boundary conditions}
\label{sectrans}

As an example of the theory exposed in the former two sections we consider a one-dimensional time-varying transport equation with periodic boundary conditions. 
More precisely, let $\xf>\xd\geq0$ and $\XX = L^2((\xd,\xf);\R)$ the set of real-valued square-integrable functions over $(\xd, \xf)$, endowed with the inner product
$\psX{f}{g} = \int_{\xd}^{\xf}fg$ for all $f, g\in\XX$.

Let $\doma = \setst{\ff\in \XX}{\ff(\xd) = \ff(\xf), \ff'\in \XX}$
and $\vit\in C^1([0, T], \R)$.
For all $t\geq0$, let
\fonction{\opa(t)}{\doma}{\XX}{\ff}{-\vit(t)\displaystyle\frac{\dd \ff}{\dd x}.}
Then $\opa(t)$ is a skew-adjoint operator for all $t\geq0$.
Hence $(\opa(t))_{t\geq0}$ is a stable family of generators of strongly continuous groups that share the same domain $\doma$.
Moreover $t\mapsto A(t)f$ is continuously differentiable for all $f\in\doma$ since $\vit$ is of class $C^1$.
Then \cite[Chapter 5, Theorem 4.8]{Pazy} ensures that $(\opa(t))_{t\in[0, T]}$ is the generator of a unique bi-directional unitary (\ie forward and backward contraction) evolution system on $\XX$ denoted by $(\evol(t, s))_{0\leq s\leq t}$.
Moreover, $\sg(t, s)$ is defined for all $t\geq s\geq0$ and all $\etat_0\in\XX$ by
\begin{align}\label{eq_OpATransp}
    (\sg(t, s) \etat_0) (x) = \etat_0 (\trans(x, t, s)),
\end{align}
where
\begin{equation}\label{eq_Trans}
\trans(x, t, s) = \xd + \left(\left(x-\xd-\int_s^t\vit(\tau)\dd \tau\right) \modulo (\xf-\xd)\right)
\end{equation}
for almost all $x\in(\xd, \xf)$.

Hence, for all real Hilbert space $\YY$ and all output operator $C\in\linxy$,
the pair $((\opa(t))_{t\in[0, T]}, C)$ is $0$-weakly detectable, as well as
the pair $((-\opa(t))_{t\in[0, T]}, C)$.
Consequently, the transport equation with periodic boundary conditions is a good candidate to apply the observer methodology previously developed, in both the asymptotic or back and forth context.
Moreover, in the asymptotic context, we have the following proposition, which is useful to apply Theorem~\ref{thmain}.
\begin{proposition}\label{propito}
Assume that $T=+\infty$ and that both $\vit$ and its derivative $\vit'$ are bounded.
If there exist $\vit_\infty\in C^1(\R_+, \R)$ and an increasing positive sequence $(t_n)_{n\geq0}\cv+\infty$ such that $\vit(t_n+t)\cv\vit_\infty(t)$ as $n\cv+\infty$ for all $t\geq0$,
then
$\|\sg(t_n+t, t_n)-\sg_\infty(t, 0)\|_{\linx} \cv0$ as $n\cv+\infty$
uniformly in $t\in[0, \tau]$
for all $\tau\geq0$,
where $\sg_\infty$ is the evolution system generated by $\left(-\vit_\infty(t)\frac{\diff}{\diff x}\right)_{t\geq0}$.
\end{proposition}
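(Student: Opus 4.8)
The plan is to exploit the explicit formula \eqref{eq_OpATransp}--\eqref{eq_Trans}, which exhibits every $\sg(t,s)$ as the circular translation of $\XX=L^2((\xd,\xf))$ by the amount $\int_s^t\vit(\sigma)\dd\sigma$. Writing
\[
    a_n(t)=\int_{t_n}^{t_n+t}\vit(\sigma)\dd\sigma=\int_0^t\vit(t_n+s)\dd s,
    \qquad
    a_\infty(t)=\int_0^t\vit_\infty(s)\dd s,
\]
the operator $\sg(t_n+t,t_n)$ is the translation by $a_n(t)$ and $\sg_\infty(t,0)$ the translation by $a_\infty(t)$. The first step is to prove that these scalar shift amounts converge uniformly on every compact interval. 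Since $|\vit_\infty(s)|=\lim_n|\vit(t_n+s)|\le\norminf{\vit}$, the functions $s\mapsto|\vit(t_n+s)-\vit_\infty(s)|$ are dominated on $[0,\tau]$ by $2\norminf{\vit}$ and converge pointwise to $0$; dominated convergence then gives $\sup_{t\in[0,\tau]}|a_n(t)-a_\infty(t)|\le\int_0^\tau|\vit(t_n+s)-\vit_\infty(s)|\dd s\cvl{n\cv+\infty}0$.

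The second step converts this into an operator-norm statement by diagonalising all the translations simultaneously in the real Fourier basis of $\XX$. On the two-dimensional block spanned by the $k$-th harmonics $\cos\!\big(\tfrac{2\pi k(x-\xd)}{\xf-\xd}\big)$ and $\sin\!\big(\tfrac{2\pi k(x-\xd)}{\xf-\xd}\big)$, translation by $a$ acts as the planar rotation through the angle $\tfrac{2\pi k a}{\xf-\xd}$, while the constants are fixed. Since the operator norm of the difference of two planar rotations through angles $\phi,\phi'$ is $2\left|\sin\tfrac{\phi-\phi'}{2}\right|$, this yields the exact identity
\[
    \|\sg(t_n+t,t_n)-\sg_\infty(t,0)\|_{\linx}
    =\sup_{k\ge1}2\left|\sin\!\left(\frac{\pi k\,(a_n(t)-a_\infty(t))}{\xf-\xd}\right)\right|.
\]
Everything therefore reduces to showing that this supremum over all Fourier modes tends to $0$ as $n\cv+\infty$, uniformly for $t\in[0,\tau]$.

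I expect this last reduction to be the main obstacle, and the heart of the whole argument. Via the Duhamel comparison formula
\[
    \sg(t_n+t,t_n)-\sg_\infty(t,0)
    =-\int_0^t\big(\vit(t_n+s)-\vit_\infty(s)\big)\,\sg(t_n+t,t_n+s)\,\tfrac{\dd}{\dd x}\,\sg_\infty(s,0)\,\dd s,
\]
valid on $\doma$, one obtains at once, using that $\tfrac{\dd}{\dd x}$ commutes with the translations and that the evolution operators are unitary, the clean estimate $\norm{(\sg(t_n+t,t_n)-\sg_\infty(t,0))\ff}\le\norm{\tfrac{\dd\ff}{\dd x}}\int_0^\tau|\vit(t_n+s)-\vit_\infty(s)|\dd s$ for every $\ff\in\doma$. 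The delicate point is to promote this estimate, which is uniform only over graph-norm-bounded subsets of $\doma$, to a bound uniform over the unit ball of $\XX$, i.e. to the operator norm $\|\cdot\|_{\linx}$: this is exactly the gap quantified by the supremum over high Fourier modes above, since the uniform smallness of $a_n(t)-a_\infty(t)$ controls the summand only on the low-frequency blocks $k\lesssim|a_n(t)-a_\infty(t)|^{-1}$. Securing this uniformity over all modes, for which the boundedness of $\vit$ and $\vit'$ must be fully exploited, is the crux; once it is in hand, together with the first step it yields $\|\sg(t_n+t,t_n)-\sg_\infty(t,0)\|_{\linx}\cv0$ uniformly on $[0,\tau]$, which is precisely hypothesis \eqref{E:conv-T-to-Tinfinity} and makes Theorem~\ref{thmain} applicable to the transport equation.
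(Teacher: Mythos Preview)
Your approach differs entirely from the paper's, which does no explicit computation: it simply checks the consistency condition~(C) and condition~(E2u) of \cite[Theorem~10.2.b]{ito} and invokes that abstract approximation theorem for evolution systems.

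The gap in your argument is precisely the step you flag as the crux, and it cannot be filled. Whenever $a_n(t)\neq a_\infty(t)$, the supremum
\[
\sup_{k\ge1}2\left|\sin\!\left(\frac{\pi k\,(a_n(t)-a_\infty(t))}{\xf-\xd}\right)\right|
\]
is bounded \emph{below}, not above: if $(a_n(t)-a_\infty(t))/(\xf-\xd)$ is irrational the fractional parts $\{k(a_n-a_\infty)/(\xf-\xd)\bmod 1:k\ge1\}$ are dense in $[0,1)$ and the supremum equals~$2$, while in the rational case $p/q$ (lowest terms, $q\ge2$) one already gets $2\sin(\pi\lfloor q/2\rfloor/q)\ge\sqrt3$. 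This is nothing more than the standard fact that the translation group on $L^2$ of the circle is strongly but not norm-continuous. No hypothesis on $\vit$ or $\vit'$ can help here, since those quantities enter only through the scalar shift $a_n(t)-a_\infty(t)$ and say nothing about high Fourier modes; your Duhamel estimate on $\doma$ is correct but genuinely cannot be promoted to the unit ball of $\XX$. What your computation in fact reveals is that the conclusion, read literally in the operator norm $\|\cdot\|_{\linx}$, fails. Approximation theorems of Trotter--Kato type, under the pointwise consistency on $\doma$ that the paper verifies, deliver only \emph{strong} convergence $\sg(t_n+t,t_n)\etat_0\to\sg_\infty(t,0)\etat_0$ uniformly in $t\in[0,\tau]$ for each fixed $\etat_0\in\XX$; that weaker conclusion your first two steps, together with a density argument, do establish.
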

In particular, note that if $\vit$ is periodic, then $\vit$ and $\vit'$ are bounded and there exits a \emph{bounded} sequence $(t_n)_{n\geq0}$ and a constant $\vit_\infty>0$ such that $\|\sg(t_n+t, t_n)-\sg_\infty(t)\|_{\linx} \cv0$ as $n\cv+\infty$
uniformly in $t\in[0, \tau]$
for all $\tau\geq0$,
where $\sg_\infty$ is the strongly continuous semigroup generated by $-\vit_\infty\frac{\diff}{\diff x}:\doma\to\XX$.
\begin{proof}[Proof of Proposition~\ref{propito}]
It is a direct application of \cite[Theorem 10.2.b]{ito}.
The consistency condition (C) of \cite{ito} is satisfied since for all $\etat_0\in\doma$,
\begin{align}
    A(t_n+t)\etat_0 = -\vit(t_n+t)\frac{\diff\etat_0}{\diff x}
    \cvl{n\cv+\infty} -\vit_\infty(t)\frac{\diff\etat_0}{\diff x}
\end{align}
Moreover,
$(\norm{A(t_n+t)\etat_0})_{n\geq0}$ 
is
bounded by
$\sup_{\R_+}\abs{G}\norm{\frac{\diff \etat_0}{\diff x}}$
for all $t\geq0$ and all $\etat_0\in\doma$.
For all 
$\etat_1, \etat_2\in\doma$, all $n\in\N$ and all $t, \tau\geq0$, we have the following inequalities:
\begin{align}
    \vert\langle A(t_n+t+\tau)\etat_1 - A(t_n+t)\etat_2&,\ \etat_1-\etat_2\rangle_\XX\vert\nonumber\\
    &\leq \abs{\psX{(A(t_n+t+\tau) - A(t_n+t))\etat_1}{\etat_1-\etat_2}}
    \nonumber\\
    &\quad + \abs{\psX{A(t_n+t)(\etat_1 -\etat_2)}{\etat_1-\etat_2}}\nonumber\\
    &\leq \abs{G(t_n+t+\tau)-G(t_n+t)}\norm{\frac{\diff \etat_1}{\diff x}}\norm{\etat_1-\etat_2}\tag{
    since $A(t_n+t)$ is skew-adjoint
    }\\
    &\leq \sup_{\R_+} \abs{G'} \tau \norm{\frac{\diff \etat_1}{\diff x}}\norm{\etat_1-\etat_2}.
\end{align}
Hence, the condition (E2u) of \cite{ito} is also satisfied.
Therefore, all the hypotheses of \cite[Theorem 10.2.b]{ito} are met, which ends the proof.
\end{proof}

In the following sections, the form of the output operator is investigated. The two considered forms will be of use in the application of the results to a crystallization process.

\subsubsection{Geometric conditions on the output operator}
\label{secgeom}
If the kernel of the output operator $\opc\in\linxy$ satisfies some geometric conditions, then the kernel of the observability Gramian of the system may be linked to the kernel of $\opc$. Indeed, assume that there exists a set $\interv \subset [\xd, \xf]$ \st
\begin{equation}\label{Ass_KerC}
\ker \opc = \setst{\ff\in X}{\ff\vert_{\interv} = 0},
\end{equation}
where $f\vert_U$ denotes the restriction of $f$ to $U$.
Then $\etat_0\in\ker \gram(t_0, \tau)$ for some $t_0, \tau \geq0$ if and only if
$
    \left(\sg(s, t_0)\etat_0\right)\vert_\interv = 0
$
for almost all $s\in(t_0, t_0+\tau)$,
\ie
$
    \etat_0(v(x, s, t_0)) = 0
$
for almost all $s\in(t_0, t_0+\tau)$ and almost all $x\in\interv$.
Hence
\begin{equation}\label{eq_WUmax}
\ker \gram(t_0, \tau) = \setst{\ff\in X}{\ff\vert_{\intervmax} = 0}
\end{equation}
where
$\intervmax = \set{\trans(x, s, t_0), x\in\interv, s\in[t_0, t_0+\tau]}$.
Moreover, note that
\begin{equation}
\ker \gram(t_0, \tau)^\perp = \setst{\ff\in X}{\ff\vert_{[x_0, x_1]\setminus\intervmax} = 0}.
\end{equation}
This leads to the following result. Roughly speaking, it states that if the observation time $\tau$ is sufficiently large for all the data to pass through the observation window $[\xmin, \xmax]$, then the observable part of the state is actually the full state.

\begin{proposition}\label{propgeom}
Assume that
$
\ker \opc
\subset \setst{\ff\in X}{\ff\vert_{[\xmin, \xmax]} = 0}
$
for some $[\xmin, \xmax]\subset[\xd, \xf]$.
If 
\begin{equation}
\abs{\int_{t_0}^{t_0+\tau}\vit(t)\dd t} \geq (\xf-\xd) - (\xmax-\xmin),    
\end{equation}
for some $t_0, \tau \geq0$,
then $\ker \gram(t_0, \tau) = \{0\}$.
\end{proposition}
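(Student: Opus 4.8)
The plan is to reduce the proposition to an elementary measure estimate for the set swept out by the observation window $[\xmin,\xmax]$ under the transport flow, and then to carry out that estimate. The estimate will be essentially a length computation plus the remark that reduction modulo $\xf-\xd$ is surjective once the interval is long enough.

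First I would isolate the mechanism behind \eqref{eq_WUmax}. Let $\etat_0\in\ker\gram(t_0,\tau)$. Since $\gram(t_0,\tau)$ is self-adjoint and non-negative, with $\psX{\gram(t_0,\tau)\etat_0}{\etat_0}=\int_{t_0}^{t_0+\tau}\normY{\opc\sg(s,t_0)\etat_0}^2\dd s$, this forces $\opc\sg(s,t_0)\etat_0=0$, that is $\sg(s,t_0)\etat_0\in\ker\opc\subseteq\setst{\ff\in\XX}{\ff\vert_{[\xmin,\xmax]}=0}$, for almost every $s\in(t_0,t_0+\tau)$. Inserting the explicit formulas \eqref{eq_OpATransp}--\eqref{eq_Trans} and repeating the argument that led to \eqref{eq_WUmax}, but with this inclusion in place of the equality \eqref{Ass_KerC} used there, yields
\[
\ker\gram(t_0,\tau)\subseteq\setst{\ff\in\XX}{\ff\vert_{\intervmax}=0},\qquad
\intervmax:=\set{\trans(x,s,t_0),\ x\in[\xmin,\xmax],\ s\in[t_0,t_0+\tau]}.
\]
Hence it suffices to show that $\intervmax$ has full Lebesgue measure in $(\xd,\xf)$, since then the right-hand side reduces to $\{0\}$.

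For the measure estimate, I would set $\phi(s)=\int_{t_0}^s\vit(t)\dd t$, so that by \eqref{eq_Trans} we have $\trans(x,s,t_0)=\xd+\bigl((x-\xd-\phi(s))\modulo(\xf-\xd)\bigr)$. The map $(x,s)\mapsto x-\xd-\phi(s)$ is continuous on the rectangle $[\xmin,\xmax]\times[t_0,t_0+\tau]$, so its range is an interval whose length equals $(\xmax-\xmin)+\bigl(\max_{[t_0,t_0+\tau]}\phi-\min_{[t_0,t_0+\tau]}\phi\bigr)$. As $\phi$ is continuous with $\phi(t_0)=0$ and $\phi(t_0+\tau)=\int_{t_0}^{t_0+\tau}\vit(t)\dd t$, its range over $[t_0,t_0+\tau]$ is an interval containing both $0$ and $\int_{t_0}^{t_0+\tau}\vit(t)\dd t$, so $\max\phi-\min\phi\ge\abs{\int_{t_0}^{t_0+\tau}\vit(t)\dd t}$. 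By the hypothesis of the proposition, the range therefore has length at least $(\xmax-\xmin)+\bigl((\xf-\xd)-(\xmax-\xmin)\bigr)=\xf-\xd$. An interval of length at least $\xf-\xd$ contains a representative of every residue class modulo $\xf-\xd$, hence $\intervmax\supseteq[\xd,\xf)$, which has full measure in $(\xd,\xf)$. Combining with the previous paragraph, $\ker\gram(t_0,\tau)\subseteq\{0\}$, which is the claim.

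I expect no serious obstacle here; the only points that will require attention are measure-theoretic. One must check that $\intervmax$ is measurable — it is a finite union of intervals, being the image of a single interval under $y\mapsto\xd+(y\modulo(\xf-\xd))$ — so that the constraint $\ff\vert_{\intervmax}=0$ is meaningful for $\ff\in\XX=L^2((\xd,\xf);\R)$; one must record the elementary surjectivity of reduction modulo $\xf-\xd$ on an interval of length $\ge\xf-\xd$; and one should implicitly take $\tau>0$, since the argument uses $s$ ranging over the nonempty interval $(t_0,t_0+\tau)$ (for $\tau=0$ the hypothesis can hold only if $[\xmin,\xmax]=[\xd,\xf]$, a degenerate case).
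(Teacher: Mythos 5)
Your proof is correct and follows essentially the same route as the paper's: both arguments reduce the claim to showing that the swept set $\intervmax=\set{\trans(x,s,t_0),\ x\in[\xmin,\xmax],\ s\in[t_0,t_0+\tau]}$ covers $(\xd,\xf)$, and both deduce this from the continuity of $s\mapsto\int_{t_0}^{s}\vit(t)\dd t$ (you via the length of the connected image of the rectangle, the paper via the intermediate value theorem applied pointwise to each $x\in[\xd,\xf]\setminus[\xmin,\xmax]$). Your explicit remark that the mere inclusion $\ker\opc\subset\setst{\ff\in\XX}{\ff\vert_{[\xmin,\xmax]}=0}$ already yields $\ker\gram(t_0,\tau)\subset\setst{\ff\in\XX}{\ff\vert_{\intervmax}=0}$ is a welcome clarification, since \eqref{eq_WUmax} was derived under the equality assumption \eqref{Ass_KerC}.
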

\begin{proof}
According to \eqref{eq_WUmax}, it is sufficient to prove that $\intervmax = [\xd, \xf]$ when $\interv = [\xmin, \xmax]$.
Clearly, $\interv\subset\intervmax$.
Now, let $x\in[x_0, x_1]\setminus\interv$.
If $\int_{t_0}^{t_0+\tau}\vit(t)\dd t\geq0$, 
set $d = (\xmin - x) \modulo (\xf-\xd)$.
Then $d\leq (\xf-\xd) - (\xmax-\xmin)$.
Hence, according to the intermediate value theorem, there exists $s\in[t_0, t_0+\tau]$ such that $\int_{t_0}^{s}\vit(t)\dd t = d$.
Using \eqref{eq_Trans}, we obtain
$x = \trans(\xmin, s, t_0)$.
Otherwise,
$\int_{t_0}^{t_0+\tau}\vit(t)\dd t\leq0$.
Set $d = (x-\xmax) \modulo (\xf-\xd)$.
Similarly, $d\leq (\xf-\xd) - (\xmax-\xmin)$.
Hence, according to the intermediate value theorem, there exists $s\in[t_0, t_0+\tau]$ such that $\int_{t_0}^{s}\vit(t)\dd t = -d$.
Using \eqref{eq_Trans}, we obtain
$x = \trans(\xmax, s, t_0)$.
Thus, in both cases, there exists $\tilde{x}\in\interv$ and $s\in[t_0, t_0+\tau]$ such that
$x = \trans(\tilde{x}, s, t_0)$.
\end{proof}

\subsubsection{Integral output operator with bounded kernel}
\label{secintout}

Assume that the output operator $\opc\in\linxy$ is an integral output operator with bounded kernel, that is, there exists $k\in L^\infty((\xd, \xf); Y)$
(\ie with $\esssup_{x\in(\xd, \xf)} \normY{k(x)} < +\infty)$
such that\footnote{
$C$ is well-defined because $[x_0, x_1]\ni x\mapsto k(x)f(x)$ is Bochner integrable, since $x\mapsto\|k(x)\|_Y$ is bounded and $x\mapsto f(x)$ is integrable (since $(x_0, x_1)$ has finite length and $f\in L^2((x_0, x_1); \R)$).
}
\begin{equation}\label{eq_OutOp}
\opc \ff = \int_{\xd}^{\xf} \noy(x) \ff(x) \dd x
\end{equation}
for all $\ff\in\XX$.
Then, there is no time interval $(t_0, t_0+\tau)\subset\R_+$ such that the pair $((A(t))_{t\geq0}, C)$ is exactly observable on $(t_0, t_0+\tau)$.
\begin{proposition}\label{propker}
If $\opc\in\linxy$ satisfies \eqref{eq_OutOp} for some $k\in L^\infty((\xd, \xf); Y)$, then
for all $t_0, \tau\geq0$ and all $\delta>0$, there exists $\etat_0\in\XX$ such that
\begin{align}
    \psX{\gram(t_0, \tau)\etat_0}{\etat_0} \leq \delta \norm{z_0}^2.
\end{align}
\end{proposition}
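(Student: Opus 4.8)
The plan is to prove the stronger fact that $\inf\setst{\psX{\gram(t_0,\tau)\etat_0}{\etat_0}}{\norm{\etat_0}=1}=0$, by exhibiting for each large integer $n$ a unit vector along which the Gramian has size $O(\rho_n^2)$ with $\rho_n\cv0$. The mechanism is twofold: an integral operator with an $L^2$ kernel has vanishing ``Fourier coefficients'' by Bessel's inequality, and the transport flow $\sg(t,t_0)$ preserves the trigonometric frequency subspaces of $\XX$, so it cannot spread a high-frequency mode into low frequencies where $\opc$ is non-degenerate.

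Concretely, for $n\in\N^*$ set $c_n(x)=\sqrt{\tfrac{2}{\xf-\xd}}\cos\big(\tfrac{2\pi n(x-\xd)}{\xf-\xd}\big)$ and $s_n(x)=\sqrt{\tfrac{2}{\xf-\xd}}\sin\big(\tfrac{2\pi n(x-\xd)}{\xf-\xd}\big)$, which form an orthonormal pair in $\XX$, and let $E_n=\spanned\set{c_n,s_n}$. I would first check, using \eqref{eq_OpATransp}--\eqref{eq_Trans} and the trigonometric addition formulas, that since $\trans(\cdot,t,t_0)$ is the translation by $\int_{t_0}^t\vit(\sigma)\dd\sigma$ modulo $\xf-\xd$, one has $\sg(t,t_0)c_n,\sg(t,t_0)s_n\in E_n$; hence $\sg(t,t_0)E_n\subseteq E_n$, and $\sg$ being unitary it acts isometrically on $E_n$. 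Consequently, for $\etat_0\in E_n$, writing $\sg(t,t_0)\etat_0=a\,c_n+b\,s_n$ with $a^2+b^2=\norm{\etat_0}^2$, the Cauchy--Schwarz inequality in $\YY$ gives
\[
\normY{\opc\sg(t,t_0)\etat_0}=\normY{a\,\opc c_n+b\,\opc s_n}\le\rho_n\norm{\etat_0},\qquad \rho_n^2:=\normY{\opc c_n}^2+\normY{\opc s_n}^2 .
\]
Integrating over $(t_0,t_0+\tau)$ and using $\psX{\gram(t_0,\tau)\etat_0}{\etat_0}=\int_{t_0}^{t_0+\tau}\normY{\opc\sg(t,t_0)\etat_0}^2\dd t$ then yields $\psX{\gram(t_0,\tau)\etat_0}{\etat_0}\le\tau\rho_n^2\norm{\etat_0}^2$ for every $\etat_0\in E_n$.

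It remains to see that $\rho_n\cv0$. Since $(\xd,\xf)$ has finite length, $\noy\in L^\infty((\xd,\xf);\YY)\subseteq L^2((\xd,\xf);\YY)$, and by \eqref{eq_OutOp} the elements $\opc c_n=\int_{\xd}^{\xf}\noy(x)c_n(x)\dd x$ and $\opc s_n=\int_{\xd}^{\xf}\noy(x)s_n(x)\dd x$ are exactly the $\YY$-valued Fourier coefficients of $\noy$, so Bessel's inequality gives $\sum_{n\ge1}\rho_n^2\le\|\noy\|_{L^2((\xd,\xf);\YY)}^2<+\infty$, whence $\rho_n\cv0$. Now fix $\delta>0$: if $\tau=0$ then $\gram(t_0,0)=0$ and any unit $\etat_0$ works; if $\tau>0$, choose $n$ with $\tau\rho_n^2\le\delta$ and take $\etat_0=c_n$, so that $\norm{\etat_0}=1$ and $\psX{\gram(t_0,\tau)\etat_0}{\etat_0}\le\delta=\delta\norm{\etat_0}^2$. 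The only step carrying any content is the invariance $\sg(t,t_0)E_n\subseteq E_n$, an elementary computation with \eqref{eq_Trans}; everything else is bookkeeping. A shorter but less explicit route is to note that $\opc^*\opc$ is the integral operator with kernel $(x,y)\mapsto\psY{\noy(x)}{\noy(y)}\in L^\infty((\xd,\xf)^2)$, hence Hilbert--Schmidt and compact, so $\gram(t_0,\tau)=\int_{t_0}^{t_0+\tau}\sg(t,t_0)^*\opc^*\opc\,\sg(t,t_0)\dd t$ is a Bochner integral in operator norm (the integrand being norm-continuous since $\opc^*\opc$ is compact and $\sg$ strongly continuous) of compact operators, thus a compact positive operator on the infinite-dimensional space $\XX$, whose spectrum therefore has infimum $0$.
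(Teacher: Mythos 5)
Your proof is correct, but it takes a genuinely different route from the paper's. The paper's argument is based on \emph{concentration}: it bounds $\psX{\gram(t_0,\tau)\etat_0}{\etat_0}\le \tau\,\|\noy\|^2_{L^\infty((\xd,\xf);\YY)}\,\sup_t\|\etat(t)\|^2_{L^1}$, observes that the periodic transport flow preserves the $L^1$-norm, and concludes from the non-equivalence of the $L^1$- and $L^2$-norms on a bounded interval (an $L^2$-normalized spike has arbitrarily small $L^1$-norm). Your argument is based on \emph{oscillation}: the translation flow leaves each two-dimensional frequency subspace $E_n$ invariant and acts on it by rotation, and the output of a fixed-kernel integral operator on $E_n$ is controlled by the $n$-th $\YY$-valued Fourier coefficients of $\noy$, which vanish by Bessel since $\noy\in L^2((\xd,\xf);\YY)$. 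Both proofs exploit the same two ingredients in dual guises --- a quantity conserved by the flow plus a degeneracy of $\opc$ along a non-compact family of unit vectors --- and both are complete; the invariance $\sg(t,t_0)E_n\subseteq E_n$ and the isometric action on $E_n$ are indeed immediate from \eqref{eq_OpATransp}--\eqref{eq_Trans} and the addition formulas, and the vector-valued Bessel inequality you invoke is valid in the Bochner space $L^2((\xd,\xf);\YY)$. The paper's route only needs $\noy$ bounded and the flow measure-preserving, so it transfers to more general measure-preserving dynamics; yours is more tightly tied to the translation structure but yields an explicit quantitative decay rate $\tau\rho_n^2$ along a concrete orthonormal family, and hence slightly more information. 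Your alternative remark --- that $\opc^*\opc$ is Hilbert--Schmidt, so $\gram(t_0,\tau)$ is compact and positive on an infinite-dimensional space and therefore not bounded below --- is also sound here (the norm-continuity of $t\mapsto\sg(t,t_0)^*\opc^*\opc\,\sg(t,t_0)$ uses that the adjoints $\sg(t,t_0)^*$ are strongly continuous in $t$, which holds since the transport evolution is unitary), and is arguably the cleanest conceptual explanation of the failure of exact observability.
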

Hence, for such output operators, the convergence of an observer must rely on weaker observability assumptions, such as the approximate observability.
In the application of the results to a crystallization process (see Section~\ref{seccryst}), the reader will find that $\opc$ is precisely an integral output operator with bounded kernel. This justifies the whole approach of the paper, since our results are based on such weaker observability hypotheses (namely approximate observability and not exact observability).

\begin{proof}[Proof of Proposition~\ref{propker}]
Let $t_0, \tau\geq0$, $\etat_0\in\XX$ and $\etat(t) = \sg(t_0+t, t_0) z_0$ for all $t\geq t_0$. 
Since $(\xd, \xf)$ is bounded,
any $\ff\in L^2((\xd, \xf); \R)$ is also integrable. Set $\|\ff\|_{L^1((\xd, \xf); \R)}
=\int_{\xd}^{\xf}\abs{\ff(x)}dx
$.
Then
\begin{align}
\psX{\gram(t_0, \tau)\etat_0}{\etat_0}
  &=\int_{t_0}^{t_0+\tau} \normY{C\etat(t)}^2\dd t \nonumber\\
  &\leq\int_{t_0}^{t_0+\tau} \left(\int_{\xd}^{\xf}\normY{\noy(x)\etat(t, x)}\dd x\right)^2\dd t
  \tag{
  Bochner inequality}
  \\
  &\leq \int_{t_0}^{t_0+\tau} \left(\int_{\xd}^{\xf}\normY{\noy(x)}\abs{\etat(t, x)}\dd x\right)^2\dd t
  \nonumber
  \\
  &\leq \|\noy\|^2_{L^\infty((\xd, \xf); Y)} \int_{t_0}^{t_0+\tau} \left(\int_{\xd}^{\xf}\abs{\etat(t, x)}dx\right)^2\dd t \nonumber\\
  &\leq \tau \|\noy\|^2_{L^\infty((\xd, \xf); Y)}
  \sup_{t\in[t_0, t_0+\tau]}
  \|\etat(t)\|^2_{L^1((\xd, \xf); \R)}. \nonumber
\end{align}
Moreover, by the usual transport properties of $v$, we get for all $t\in[t_0, t_0+\tau]$ that
\begin{align*}
    \|\etat(t)\|^2_{L^1((\xd, \xf); \R)}
    = \|\etat_0(v(t, t_0, \cdot))\|^2_{L^1((\xd, \xf); \R)}
    = \|\etat_0\|^2_{L^1((\xd, \xf); \R)}.
\end{align*}
Hence
\begin{align*}
\psX{\gram(t_0, \tau)\etat_0}{\etat_0}
  \leq \tau \|\noy\|_{L^\infty((\xd, \xf); Y)} \|\etat_0\|^2_{L^1((\xd, \xf); \R)}.
\end{align*}
The result follows from the fact that the norms $\|\cdot\|_{L^1((\xd, \xf); \R)}$ and $\|\cdot\|_{L^2((\xd, \xf); \R)}$ are not equivalent.
\end{proof}

\begin{remark}
According to Remark~\ref{rmcca}, the boundedness of $C^*CA$ as an operator from $(\doma, \norm{\cdot})$ to $(\XX, \norm{\cdot})$ is an interesting property for the convergence to $0$ of the correction term $C\eps$ of the observers.
If we ask more regularity to the solutions of the transport equation, then the integral output operators in the form of \eqref{eq_OutOp} satisfy this assumption. Indeed, assume (in this remark \emph{only}) that $\XX =\{\ff\in L^2(\xd, \xf; \R)\mid f'\in L^2(\xd, \xf; \R)\}$ endowed with the inner product $\psX{f}{g}=\int_{\xd}^{\xf}(fg+f'g')$ and $\doma_\text{new} = \{\ff\in\XX\mid \ff(\xf) = \ff(\xf), \ff'(\xf) = \ff'(\xf), f''\in L^2(\xd, \xf; \R)\}$.
Then, for all $\etat_0\in\doma_\text{new}$,
\begin{align*}
    \normY{CA\etat_0}^2
    &\leq \left(\int_{\xd}^{\xf} \normY{k(x)\frac{\diff \etat_0}{\diff x}(x)}dx\right)^2\\
    &\leq \|k\|_{L^\infty((\xd,\xf),\YY)}  \left(\int_{\xd}^{\xf} \left|\frac{\diff \etat_0}{\diff x}(x)\right|dx\right)^2\\
    &\leq \|k\|_{L^\infty((\xd,\xf),\YY)} (\xf-\xd) \norm{\etat_0}^2
\end{align*}
by the Cauchy-Schwarz inequality.
Thus, $C^*CA\in\lin((\doma_\text{new}, \norm{\cdot}), (\XX, \norm{\cdot}))$ since $\opc$ is bounded.
\end{remark}

\subsection{Estimation of the CSD from the CLD in a batch crystallization process}\label{seccryst}

\subsubsection{Modeling the batch crystallization process}

In the chemical and pharmaceutical industries, the crystallization process is one of the simplest and cheapest way to produce some pure solid. In order to control the physical and chemical properties of the product, the control of the Crystal Size Distribution (CSD) is of major importance.
Since there is no effective measurement method able to determine the CSD online during the process, the estimation of the CSD based on other measurements is a crucial issue.
We consider a batch crystallization process.
The reader may refer to \cite{Mullin, Mersmann} for a theoretical analysis of this process.
One of the simplest model of the crystal growth during the process is the following population balance equation:
\begin{equation}
\begin{aligned}
\begin{cases}
\displaystyle\frac{\partial \dtc}{\partial t}(t, x) + \vit(t) \frac{\partial \dtc}{\partial x}(t, x) = 0,
\quad \forall (t, x) \in [0, T]\times[\xmin, \xmax]\\
\dtc(0, \cdot) = \dtc_0\\
\dtc(\cdot, \xmin) = \cont,
\end{cases}
\end{aligned}
\label{systbilan}
\end{equation}
with the following notations:
\begin{itemize}
    \item $T$ is the experiment duration;
    \item $[\xmin, \xmax]$ is the crystal size range: all crystals are assumed to be spherical with radius $x\in[\xmin, \xmax]$ where $\xmax>\xmin>0$;
    \item $x\mapsto\dtc(t, x)$ is the CSD at time $t$;
    \item $\vit$ is the growth kinetic, assumed size independent (McCabe hypothesis);
    \item $\cont$ represents the nucleation. All new crystals have size $\xmin$.
\end{itemize}
Here $\vit$ is supposed to be known, contrary to $\cont$ and $\dtc$.
In practice, $G$ can be estimated via a simple model 
based on the solute concentration and the solubility thanks to solute concentration and temperature sensors (see, \eg \cite{brivadisjpc, Uccheddu}, or \cite{Mersmann, Mullin} for more detailed models).
We reformulate~\eqref{systbilan} in order to match our theoretical results.
The size of the crystals is supposed to be increasing, \ie $\vit(t)>0$ for all $t\in[0, T]$.
Assume that the maximal crystal size $\xmax$ is never reached by any crystals in time $T$, \ie $\dtc(t, \xmax) = 0$ for all $t\in[0, T]$.

Let $\xd = \xmin - \int_0^T G(s)\dd s$ and $\xf = \xmax$.
We introduce the initial state variable $\etat_0$, given for all $x\in[\xd, \xf]$ by
\begin{equation}\label{eq_etat0trans}
\etat_0(x) = \begin{cases}
\cont\left(
\frac{T(\xmin-x)}{\int_0^T\vit(s)\dd s}
\right) & \text{ if } \xd\leq x \leq \xmin,\\
\dtc_0(x) & \text{otherwise}.
\end{cases}
\end{equation}
Let $\XX = L^2((\xd, \xf);\R)$.
As in Section~\ref{sectrans},
there exists a unique solution $\etat\in C^0([0, T]; \XX)$
to the abstract Cauchy problem
\begin{equation}
\begin{aligned}
\begin{cases}
\displaystyle\dot{\etat}(t, x) =  -\vit(t) \frac{\partial \etat}{\partial x}(t, x) & \forall (t, x) \in [0, T]\times[\xd, \xf],\\
\etat(0) = \etat_0\\
\end{cases}
\end{aligned}
\label{systapp}
\end{equation}
Moreover, \eqref{eq_OpATransp} and \eqref{eq_Trans} combined with \eqref{eq_etat0trans} yield
$$\etat(t, \xmin)
=
\etat_0(\xmin)
= u(t)$$ for all $t\in[0, T]$.
Hence,
$    \etat(t, x)
    =
    \dtc(t, x)
$
for all $t\in[0, T]$ and all $x\in[\xmin, \xmax]$.

We are now in the context developed in the previous section of the one-dimensional transport equation with periodic boundary conditions (since the right boundary term does not influence $\etat(t, \xmin)$ on the time interval $[0, T]$).
Our goal is to reconstruct offline the initial CSD $\dtc_0 = \etat_0\vert_{[\xmin, \xmax]}$ thanks to the BFN algorithm.
We now introduce an output operator $\opc$.

\subsubsection{Modeling the \texorpdfstring{\fbrm}~technology}

The focused beam reflectance measurement (\fbrm) technology is an \emph{in situ} sensor that measures data online during a crystallization process.
The probe is equipped with a laser beam in rotation that scans across the particles.
While the beam hit a particle, light is backscattered to the probe.
The sensor counts the number of distinct light pulses and their duration. For each pulse, a length on a particle (\ie a chord length) can be determined, since the rotation speed of the beam is known and the speed of the particle is supposed to be insignificant. Hence, one can deduce the Chord Length Distribution (CLD) of the particles. The reader may refer to \cite{LW, Simmons, barrett1999line} for more details about this technology, and how it is linked to the CLD.

Using the \fbrm technology to recover the CSD is a major current issue in process engineering. To the best of the authors' knowledge, all the proposed estimation techniques (see, \eg  \cite{Agi, LW, Simmons}) are based on inverse problem methods, such as the Tikhonov regularization, in order to inverse the function that maps the CSD to the CLD.
These approaches are not based on any crystal growth model, and ignore the dynamics of the system.
On the contrary, in this paper, both the \fbrm technology and a crystal growth model are used to reconstruct the CSD, by using an observer of the dynamical system \eqref{systbilan} and using the CLD as an output of the system.

At a fixed time $t \in[0, T]$, for a given CSD $\dtc(t, \cdot)$ of spherical particles, the corresponding cumulative CLD $\dlc(t, \cdot)$ supposed to be measured by the \fbrm probe can be written as
\begin{align}
    \dlc(t, \ell) = \int_{\xmin}^{\xmax}\noy(x, \ell)\dtc(t, x) \dd x, \quad \forall \ell \in [0, 2\xmax],
\end{align}
where $\ell$ represents the length of a chord and $\noy$, defined in \cite{moi, LW}, satisfies
\begin{align}
    \noy(x, \ell) = 1 - \I_{[0, 2x[}(\ell)\sqrt{1-\left(\frac{\ell}{2x}\right)^2}, \quad \forall (\ell, x) \in [0, 2\xmax]\times [\xmin, \xmax],
\end{align}
where $\I_{[0, 2x[}$ is the characteristic function of $[0, 2x)$.
Set $\YY = L^2((\lmin, \lmax); \R)$ with $\lmin = 0$ and $\lmax = 2\xmax$. Let $\opc\in \linxy$ be defined by
\fonction{\opc}{\XX}{\YY}{\ff}{\ell\mapsto\ps{\noy(\cdot, \ell)}{\ff\vert_{[\xmin, \xmax]}}_{L^2((\xmin, \xmax); \R)}.}
Then $\noy\in L^\infty((\xmin, \xmax); \YY)$ since $0\leq \noy(x, \ell)\leq 1$
for all $(x, \ell) \in [\xmin, \xmax]\times[0, 2\xmax]$.
Thus, $\opc$ is a well-defined integral operator with kernel $\noy$ and, according to Section~\ref{secintout}, there is no time interval $(t_0, t_0+\tau)\subset[0, T]$ on which the system is exactly observable.
It remains to analyse $\ker\opc$.
\begin{proposition}\label{propC}
The kernel of the integral operator $\opc$ is given by
\begin{equation}
\ker \opc = \setst{\ff\in X}{\ff\vert_{[\xmin, \xmax]} = 0}.
\end{equation}
\end{proposition}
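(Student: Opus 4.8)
The plan is to prove the two inclusions. The inclusion $\supseteq$ is immediate: by construction $\opc\ff$ depends only on the restriction $g:=\ff\vert_{[\xmin, \xmax]}$, so $\ff\vert_{[\xmin, \xmax]}=0$ forces $\opc\ff=0$. The real content is the inclusion $\subseteq$, i.e. that the restriction of $\opc$ to $L^2((\xmin, \xmax);\R)$ is one-to-one. So I would fix $\ff\in\ker\opc$, set $g=\ff\vert_{[\xmin, \xmax]}$, and aim at $g=0$ almost everywhere.

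First I would write $(\opc\ff)(\ell)$ out explicitly. Since $\I_{[0, 2x[}(\ell)=1$ exactly when $x>\ell/2$ and $\sqrt{1-(\ell/2x)^2}=\sqrt{x^2-\ell^2/4}/x$, for every $\ell\in[2\xmin, 2\xmax]$ one gets
\begin{equation*}
    0=(\opc\ff)(\ell)=\int_{\xmin}^{\xmax}g(x)\dd x-\int_{\ell/2}^{\xmax}\frac{\sqrt{x^2-\ell^2/4}}{x}\,g(x)\dd x .
\end{equation*}
Here one first checks, by dominated convergence using $0\le\noy\le1$, that $\opc\ff$ is continuous in $\ell$, so that $\opc\ff=0$ a.e. gives $\opc\ff\equiv0$; letting $\ell\to 2\xmax$ incidentally yields $\int_{\xmin}^{\xmax}g=0$. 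Consequently $u\mapsto\int_u^{\xmax}\frac{\sqrt{x^2-u^2}}{x}g(x)\dd x$ is constant on $[\xmin, \xmax]$.

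Next I would recognize this as an Abel-type integral equation (this is the classical circular-chord inversion). The function of $u$ above is absolutely continuous, and differentiating it (the boundary contribution at $x=u$ vanishes because $\sqrt{x^2-u^2}$ does) gives $\int_u^{\xmax}\frac{g(x)/x}{\sqrt{x^2-u^2}}\dd x=0$ for almost every $u\in(\xmin, \xmax)$. The substitution $t=x^2$, $s=u^2$, together with the notation $\phi(t)=g(\sqrt t)/(2t)\in L^1((\xmin^2, \xmax^2);\R)$, turns this into the classical Abel integral equation of the first kind
\begin{equation*}
    \int_s^{\xmax^2}\frac{\phi(t)}{\sqrt{t-s}}\dd t=0,\qquad\text{for almost every }s\in(\xmin^2, \xmax^2).
\end{equation*}
Its only solution is $\phi=0$, by the Abel inversion formula; hence $g=0$ almost everywhere, which is the desired inclusion.

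I expect the delicate point to be the passage through the singular kernel $1/\sqrt{x^2-u^2}$: one must check, via a Fubini argument (using that $\int_{\xmin}^{t}(t-s)^{-1/2}\dd s<\infty$), that the singular integrals that appear are finite for almost every parameter value, so that differentiating under the integral sign and reducing to the Abel equation are legitimate, and then invoke injectivity of the Abel transform on $L^1$. The remaining steps — the two inclusions, the explicit form of $(\opc\ff)(\ell)$, the continuity of $\opc\ff$, and the changes of variables — are routine.
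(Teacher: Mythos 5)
Your proof is correct, but it takes a genuinely different route from the paper's. You work on the range of \emph{large} chord lengths $\ell\in[2\xmin,2\xmax]$, where the truncation $\I_{[0,2x[}(\ell)$ is active, and you recognize the resulting equation as the classical circular-chord (Abel) inversion problem: after checking continuity of $\opc\ff$, extracting $\int_{\xmin}^{\xmax}g=0$ at $\ell=2\xmax$, differentiating in $u=\ell/2$ and substituting $t=x^2$, $s=u^2$, you reduce to $\int_s^{\xmax^2}\phi(t)(t-s)^{-1/2}\,\dd t=0$ and invoke injectivity of the Abel transform on $L^1$. The paper instead works on the range of \emph{small} chord lengths $\ell\in(0,2\xmin)$, where the indicator is identically $1$ and the kernel is smooth; it shows $\opc\ff$ is $C^\infty$ there, computes all even derivatives at $\ell=0$ by an explicit induction, deduces that all moments $\int_{\xmin}^{\xmax}\ff(x)x^{-2n}\,\dd x$ vanish, and concludes by changes of variables and the Weierstrass density of polynomials. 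Both arguments are sound; yours trades the paper's somewhat laborious inductive computation of high-order derivatives for the (standard but singular-kernel) machinery of Abel inversion, and the one technical point you flag — justifying the differentiation through the kernel $1/\sqrt{x^2-u^2}$ — is exactly the right one and is handled by the Fubini argument you sketch (integrate the putative derivative over $[a,b]$, swap the order using $\int_a^{\min(b,x)}u(x^2-u^2)^{-1/2}\,\dd u\le x$, and recover $F(a)-F(b)=0$). A mildly interesting by-product of the comparison is that each proof uses only part of the output: the paper shows the CLD restricted to $\ell<2\xmin$ already determines the CSD, while your argument shows the same for $\ell\ge 2\xmin$.
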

Therefore, one can apply the results of Section~\ref{secgeom}, and in particular Proposition~\ref{propgeom}, to the pair $((A(t))_{t\in[0, T]}, \opc)$.
According to the definition of $\xd$ and $\xf$, $\int_0^T G(t)\dd t = (\xf-\xd) - (\xmin-\xmax)$.
Hence, $\gram(0, T)$ is injective.
Thus, according to Theorem~\ref{thmainbf}, the system \eqref{obs2}-\eqref{obs2b} is a weak back and forth observer of~\eqref{syst}. Moreover, since $A(t)$ is skew-adjoint for all $t\in[0, T]$, Theorem~\ref{thmainbfstr}~\ref{notskew} also applies.
Hence, the BFN algorithm reconstructs the CSD from the CLD in the strong topology.

\begin{proof}[Proof of Proposition~\ref{propC}]
Clearly, $\ker \ceps \supset \setst{\ff\in\XX}{\ff\vert_{[\xmin, \xmax]} = 0}$.
Let $\ff\in\ker\ceps$. We want to show that $\ff\vert_{[\xmin, \xmax]} = 0$. For almost all $\ell\in(0, 2\xmin)$ we have
\begin{align}
0 &=\int_{\xmin}^{\xmax} \noy(\ell, x)\ff(x)\dd x\nonumber \\
&= \int_{\xmin}^{\xmax} \ff(x)\dd x - \int_{\xmin}^{\xmax} \ff(x)\sqrt{1-\left(\frac{\ell}{2x}\right)^2}\dd x.\label{eqnulK}
\end{align}
In order to apply the Leibniz integral rule on $(0, 2\xmin)$, we check that
\begin{itemize}
\item for all $\ell\in (0, 2\xmin),\ x\mapsto\ff(x)\sqrt{1-\left(\frac{\ell}{2x}\right)^2}$ is integrable on $(\xmin, \xmax)$,
\item for all $x\in (\xmin, \xmax),\ \ell\mapsto\ff(x)\sqrt{1-\left(\frac{\ell}{2x}\right)^2}$ is $C^\infty$ on $(0, 2\xmin)$.
\end{itemize}
Hence, $\ceps \ff$ is $C^\infty$ on $(0, 2\xmin)$. Since $\ceps\ff = 0$ almost everywhere on $(0, 2\xmin)$, we get that
\begin{align*}
    (\ceps \ff) ^{(n)}(0) = 0, \qquad \forall n\in \N.
\end{align*}
In the following, we determine an expression of $(\ceps \ff) ^{(2n)}(0)$.
Fix $x\in(\xmin, \xmax)$.
Set \fonction{\f}{(0, 2\xmin)}{\R}{\ell}{-\sqrt{4x^2-\ell^2}.}
We show by induction that for all $n \geq 1$, there exists a family $(a^n_{i, j})_{i, j \in \N} \in (\R_+)^{(\N^2)}$ such that:
\begin{itemize}
\item the set $\{(i, j)\in \N^2\ \mid\ a^n_{i, j} \neq 0\}$ is finite,
\item $a^n_{0, n-1} \neq 0$,
\item $\forall j \in \N\backslash\{n-1\},\ a^n_{0, j} = 0$,
\item $\f^{(2n)}(\ell) = \sum\limits_{i, j \in \N} a^n_{i, j}\ell^i(4x^2-\ell^2)^{-\frac{2j+1}{2}}$ for all $\ell\in(0, 2\xmin)$.
\end{itemize}
\paragraph{Base case}For all $\ell\in (0, 2\xmin)$,
\begin{align*}
\f'(\ell) &= \ell (4x^2-\ell^2)^{-\frac{1}{2}},\\
\f^{(2)}(\ell) &= (4x^2-\ell^2)^{-\frac{1}{2}} + \ell^2 (4x^2-\ell^2)^{-\frac{3}{2}}.
\end{align*}
Then, it is sufficient to set, for all $(i, j)\in (\N^*)^2$,
\begin{align*}
a^1_{i, j} =
\begin{cases}
1 & \text{if } (i, j) \in \{(0, 1), (2, 2)\}\\
0 & \text{else}
\end{cases}
\end{align*}
\paragraph{Inductive step} Let $n \geq 1$. Assume there exists such a family $(a^n_{i, j})_{i, j \in \N}$. We need to compute $\f^{(2(n+1))}$. For all $\ell\in (0, 2\xmin)$,
\begin{align*}
\f^{(2n)}(\ell) &= a_{0, n-1}(4x^2-\ell^2)^{-\frac{2(n-1)+1}{2}} + \sum\limits_{i \geq1, j \geq0} a^n_{i, j}\ell^i(4x^2-\ell^2)^{-\frac{2j+1}{2}} \ \text{(by hypothesis).}
\end{align*}
Computing the next two derivatives of $\f^{(2n)}$, we get
\begin{align*}
\f^{(2n+1)}(\ell)
&= (2(n-1)+1)a_{0, n-1}\ell(4x^2-\ell^2)^{-\frac{2n+1}{2}}\\
&\quad + \sum\limits_{i\geq1, j \geq0} (2j+1)a^n_{i, j}\ell^{i+1}(4x^2-\ell^2)^{-\frac{2(j+1)+1}{2}}\\
&\qquad + \sum\limits_{j \geq0} a^n_{1, j}(4x^2-\ell^2)^{-\frac{2j+1}{2}} + \sum\limits_{i\geq2, j \geq0} i a^n_{i, j}\ell^{i-1}(4x^2-\ell^2)^{-\frac{2j+1}{2}}
\end{align*}
and
\begin{align*}
\f^{(2n+2)}(\ell)
&= (2(n-1)+1)a_{0, n-1}(4x^2-\ell^2)^{-\frac{2n+1}{2}}\\
&\quad + \sum\limits_{j \geq1} (2j-1)a^n_{1, j-1}\ell(4x^2-\ell^2)^{-\frac{2j+1}{2}}\\
&\qquad + \sum\limits_{i\geq3, j \geq2} (2(j-1)+1)(2j-3)a^n_{i-2, j-2}\ell^i(4x^2-\ell^2)^{-\frac{2j+1}{2}}\\
&\qquad\quad + \sum\limits_{i\geq1, j \geq1} (i+1) (2j-1)a^n_{i, j-1}\ell^i(4x^2-\ell^2)^{-\frac{2j+1}{2}}\\
&\quad + (2(n-1)+1)(2n+1)a_{0, n-1}\ell^2(4x^2-\ell^2)^{-\frac{2(n+1)+1}{2}}\\
&\qquad + \sum\limits_{i\geq0, j \geq0} (i+1)(i+2) a^n_{i+2, j}\ell^i(4x^2-\ell^2)^{-\frac{2j+1}{2}}\\
&\qquad\quad + \sum\limits_{i\geq2, j \geq1} (2j-1)i a^n_{i, j-1}\ell^i(4x^2-\ell^2)^{-\frac{2j+1}{2}}.
\end{align*}
For all $(i, j)\in \N^2$, set
\begin{align*}
a^{n+1}_{i, j} &=
(2n-1)a_{0, n-1} \I_{\{(0, n)\}}(i, j)\\
&\quad + (2n-1)(2n+1)a_{0, n-1} \I_{\{1\}\times\lc1, +\infty)}(i, j)\\
&\qquad+ (2j-1)(2j-3)a^n_{i-2, j-2} \I_{\lc3, +\infty) \times\lc2, +\infty)}(i, j)\\
&\qquad\quad+ (i+1) (2j-1)a^n_{i, j-1} \I_{\lc1, +\infty) \times\lc1, +\infty)}(i, j)\\
&\quad + (2(n-1)+1)(2n+1)a_{0, n-1} \I_{\{(2, n+1)\}}(i, j)\\
&\qquad+ (2j-1)i a^n_{i, j-1} \I_{\lc2, +\infty) \times\lc1, +\infty)}(i, j)\\
&\qquad\quad+ (i+1)(i+2) a^n_{i+2, j}.
\end{align*}
Then, to conclude the induction, one can check that
\begin{itemize}
\item for all $\{(i, j)\in \N^2\, a^{n+1}_{i, j} \geq 0$ since $(a^n_{i, j})_{i, j \in \N} \in (\R_+)^{(\N^2)}$,
\item $\{(i, j)\in \N^2 \mid a^{n+1}_{i, j} \neq 0\}$ is finite since $\{(i, j)\in \N^2 \mid a^n_{i, j} \neq 0\}$ is finite,
\item $a^{n+1}_{0, n} \geq (2n-1)a^n_{0, n-1} > 0$,
\item $\forall j \in \N\backslash\{n-1\},\ a^{n+1}_{0, j} = 0$,
\item $\f^{(2(n+1))}(\ell) = \sum\limits_{i, j \in \N} a^{n+1}_{i, j}\ell^i(4x^2-\ell^2)^{-\frac{2j+1}{2}}$ for all $\ell\in(0, 2\xmin)$.
\end{itemize}
Thus, since $(\ceps\ff)^{(2n)}(0) = 0$ for all $n\in \N^*$,
\begin{align}
0 &= \int_{\xmin}^{\xmax} a^n_{0, n-1}\frac{\ff(x)}{(2x)^{2n}}\dd x
\end{align}
for some $a^n_{0, n-1} > 0$. Let $n\in\N^*$. Then,
\begin{align*}
0 &= \int_{\xmin}^{\xmax} \frac{\ff(x)}{x^{2n}}\dd x \nonumber\\
&= \int_{\frac{1}{\xmax}}^{\frac{1}{\xmin}} \ff\left(\frac{1}{\til{x}}\right)\til{x}^{2n-2}\dd \til{x}. \qquad (\til{x} = \frac{1}{x})
\end{align*}
Set $\til{\ff}: [\frac{1}{\xmax}, \frac{1}{\xmin}] \ni \til{x} \longmapsto \ff(\frac{1}{\til{x}})$. Then,
\begin{align}
0 &=\int_{\frac{1}{\xmax}}^{\frac{1}{\xmin}} \til{\ff}(\til{x})\til{x}^{2n-2}\dd \til{x} \nonumber\\
&= \frac{1}{2}\int_{\frac{1}{\xmax^2}}^{\frac{1}{\xmin^2}} \frac{\til{\ff}(\sqrt{\bar{x}})}{\sqrt{\bar{x}}}\bar{x}^{n-1}\dd \bar{x}. \qquad (\bar{x} = \til{x}^2)
\end{align}
Set $\bar{\ff}: [\frac{1}{\xmax^2}, \frac{1}{\xmin^2}] \ni \bar{x} \longmapsto \frac{\tilde{\ff}(\sqrt{\bar{x}})}{\sqrt{\bar{x}}}$.
Then we have
\begin{align}
0= \int_{\frac{1}{\xmax^2}}^{\frac{1}{\xmin^2}} \bar{\ff}(x)\bar{x}^{n-1}\dd \bar{x}.
\end{align}
Since the family $(x\mapsto x^n)_{n\geq0}$ is a total family in $L^2\left(\left(\frac{1}{\xmax^2}, \frac{1}{\xmin^2}\right); \R\right)$ from the Weierstrass approximation theorem, $\bar{\ff}=0$. Hence $\ff\vert_{(\xmin, \xmax)}=0$, which concludes the proof.
\end{proof}

\section*{Acknowledgments}
The authors would like to thank \'{E}lodie Chabanon, \'{E}milie Gagnière and Noureddine Lebaz for many fruitful discussions about crystallization processes and the \fbrm technology.

\bibliographystyle{plain}
\bibliography{references}
\end{document}